\documentclass[a4paper]{amsart}
\usepackage{amsmath, amsthm, amssymb, amsfonts, amsthm, amscd, hyperref, mathrsfs,relsize}
\usepackage[arrow, matrix, curve]{xy}

\def\IQ{{\mathbb Q}}

\def\IZ{{\mathbb Z}}
\def\IN{{\mathbb N}}

\def\IL{{\mathbb L}}
\def\IH{{\mathbb H}}

\def\id{{\operatorname{id}}}
\def\Vect{{\operatorname{Vect}}}
\def\Mod{{\operatorname{Mod}}}
\def\aut{{\operatorname{aut}}}
\def\map{{\operatorname{map}}}
\def\Aut{{\operatorname{Aut}}}
\def\hom{{\operatorname{Hom}}}
\def\diff{{\operatorname{Diff}}}
\def\bdiff{{\widetilde{\operatorname{Diff}}}}
\def\baut{{\widetilde{\operatorname{aut}}}}
\def\Der{{\operatorname{Der}}}
\def\der{{\operatorname{Der}}}

\def\Derpw{{\operatorname{Der}^+_\omega}}
\def\univ{{\langle 1 \rangle}}

\def\interior{{\operatorname{int}}}
\def\g{{\mathfrak{g}}}
\def\Lie{{\mathscr{L}ie}}

\newtheorem{thm}{Theorem}[section]
\newtheorem*{THMA}{Theorem A}
\newtheorem*{THMB}{Theorem B}

\newtheorem{cor}[thm]{Corollary}
\newtheorem{prop}[thm]{Proposition}
\newtheorem{lem}[thm]{Lemma}

\theoremstyle{definition}
\newtheorem{defi}[thm]{Definition}
\theoremstyle{remark}
\newtheorem{rem}[thm]{Remark}

\newenvironment{example}[1][Example]{\begin{trivlist}
\item[\hskip \labelsep {\bfseries #1}]}{\end{trivlist}}

\begin{document}

\title{Rational Homological Stability for Automorphisms of Manifolds}
\author{Matthias Grey}
\date{\today}

\begin{abstract}
We show rational homological stability for the homotopy automorphisms and block diffeomorphims of iterated connected sums of products of spheres. The spheres can have different dimension, but need to satisfy a certain connectivity assumption.
The main theorems of this paper extend homological stability results for automorphism spaces of connected sums of products of spheres of the same dimension by Berglund and Madsen.
\end{abstract}

\maketitle
\setcounter{tocdepth}{1}
\tableofcontents

\section{Introduction}
Let $M$ be compact manifold with boundary with a chosen basepoint on the boundary. Denote by $\map_*(M,M)$ the space of pointed maps with the compact open topology. Denote by $\map_\partial(M,M)$ the subspace of maps, fixing the boundary point-wise. We denote by $\aut_\partial(M)$ the sub-monoid of homotopy self-equivalences fixing the boundary point-wise.

The first object we study in this paper is the rational homology of the classifying space $B\aut_\partial(M).$ More precisely, we show that it satisfies homological stability for certain families of manifolds. For example, for the $g$-fold connected sum $$N_g=\big (\mathlarger{\#}_g (S^p\times S^q) \big ) \smallsetminus \interior (D^{p+q})\text{, where }3\leq p<q<2p-1,$$ we show that $H_i(B\aut_\partial(N_g);\IQ)$ is independent of $g$ for $g>2i+2.$ Moreover we show that $H_i(B\aut_\partial(M\#N_g);\IQ)$ is independent of $g$ in the same range, where $M$ is some other connected sum of products of spheres. To make this statement precise, we introduce the following notation. 
Let $I$ be a finite indexing set for pairs of positive natural numbers $p_i,q_i$ and $n$ be a positive natural number, such that $3\leq p_i\leq q_i<2p_i-1$ and $p_i+q_i=n$ for all $i\in I$. Note that this implies that necessarily $n\geq 6.$ We define a smooth $n$-dimensional manifold with boundary diffeomorphic to $S^{n-1}$ $$N_I=\left(\mathlarger{\#}_{i\in I} (S^{p_i}\times S^{q_i})\right)\smallsetminus \interior (D^n)$$ and we assume a basepoint chosen on the boundary.

For a given $p\in \IN$ we define the ``generalized genus'' as $$g_p=\begin{cases} \operatorname{rank}(H_p(N_I))/2 &\text{ if } 2p=n \\ \operatorname{rank}(H_p(N_I)) &\text{ otherwise,}  \end{cases}$$ i.e. $g_p$ is the number of $S^p\times S^q$ summands of $N_I,$ where $q=n-p.$ In other words we see $N_I$ as the connected sum of the manifolds
$$N_I= \big(\mathlarger{\#}_{i\in \{j\in I|p_j\neq p  \}} S^{p_i}\times S^{q_i} \big) \# (\mathlarger{\#}_{g_p}S^{p}\times S^{q} \big) \smallsetminus \interior{D^{n}}.$$

Denote by $$V^{p,q}=S^p\times S^q\smallsetminus \interior (D_1^n\sqcup D_2^n), \text{ where $3\leq p\leq q<2p-1$ and $p+q=n.$}$$ We define a new manifold 
$$N'=N_I\cup_{\partial_1} V^{p,q},$$ by identifying one boundary component of $V^{p,q}$ with $\partial N_I.$ Note that $N'$ is canonically diffeomorphic to the manifold $N_{I'}$ with $I'=I\cup \{i'\}$, where $p_{i'}=p$ and  $q_{i'}=q.$ Using this, we define the stabilization map $$\sigma:\aut_\partial( N_I) \rightarrow \aut_\partial( N_I\cup_{\partial_1} V^{p,q})\xrightarrow{\cong} \aut_\partial( N_{I'}),  $$ by extending a self-map of $N_I$ by the identity on $V^{p,q}.$ In this paper we study the induced map on homology of the classifying spaces
$$\sigma_*:H_*(B\aut_\partial (N_I);\IQ)\rightarrow H_*(B\aut_\partial (N_{I'});\IQ).$$ The first main theorem of this paper is:

\begin{THMA}
The map
$$H_i(B\aut_\partial(N_I);\IQ)\rightarrow H_i(B\aut_\partial(N_{I'});\IQ)$$ 
induced by the stabilization map with respect to $V^{p,q},$ where $3\leq p\leq q < 2p-1$, is an isomorphism for $g_p>2i+2$ when $2p\neq n$ and $g_p>2i+4$ if $2p=n$ and an epimorphism for $g_p\geq 2i+2$ respectively $g_p\geq 2i+4$.
\end{THMA}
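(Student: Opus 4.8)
The plan is to adapt the strategy of Berglund and Madsen for connected sums of equidimensional sphere products: first translate the statement into rational homotopy theory, then reduce it to the Chevalley--Eilenberg homology of a graded Lie algebra of derivations, and finally read off the stability range from the first fundamental theorem of invariant theory. Since $p_i\geq 3$ the manifold $N_I$ is simply connected, and as the connected sum of $(S^{p_i}\times S^{q_i})$'s with an open disc removed it is homotopy equivalent to the wedge $\bigvee_{i\in I}(S^{p_i}\vee S^{q_i})$. Its Quillen model is therefore the free graded Lie algebra $\LL_I$ on generators $\alpha_i$ of degree $p_i-1$ and $\beta_i$ of degree $q_i-1$ ($i\in I$), and the inclusion of the boundary sphere $\partial N_I\cong S^{n-1}$ is rationally classified by $\omega_I=\sum_{i\in I}[\alpha_i,\beta_i]$, an element of degree $n-2$. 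The hypothesis $q_i<2p_i-1$ for all $i$ is equivalent to the statement that every generator of $\LL_I$ has degree strictly smaller than every quadratic bracket; this is what guarantees there is no higher attaching data, that $\omega_I$ lies in the quadratic part, and (developed in the body of the paper, extending Berglund--Madsen to unequal dimensions) that $\pi_*(\aut_\partial(N_I))\otimes\IQ$ is identified with the graded Lie algebra $\g_I:=\Derpw(\LL_I)$ of positive-degree $\omega_I$-preserving derivations. Together with Borel's computation of the stable rational cohomology of arithmetic groups, and a Hochschild--Serre argument handling $\pi_0\aut_\partial(N_I)$, this yields an identification
$$H_*(B\aut_\partial(N_I);\IQ)\;\cong\;H^{\mathrm{CE}}_*(\g_I)$$
with Chevalley--Eilenberg homology, under which the stabilization map $\sigma$ corresponds to the map induced by the Lie algebra homomorphism $\g_I\to\g_{I'}$ extending a derivation by zero on the two new generators of degrees $p-1$ and $q-1$. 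So it suffices to show this induced map on $H^{\mathrm{CE}}_*$ is an isomorphism (resp.\ epimorphism) in the asserted ranges.

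\textbf{The classical subalgebra and reduction to invariants.} Single out the generators of $\LL_{I'}$ lying in the stabilizing degrees. If $2p\neq n$, the $g_p$ generators of degree $p-1$ span a space $A\cong\IQ^{g_p}$, the corresponding generators of degree $q-1$ span a space canonically dual to it via $\omega_{I'}$, no brackets occur in those two degrees, and preservation of $\omega_{I'}$ ties the two linear actions into an action of $\mathfrak k_{g_p}:=\mathfrak{gl}_{g_p}$. If $2p=n$, the $2g_p$ generators of degree $p-1$ carry the $(-1)^p$-symmetric intersection form, so $\mathfrak k_{g_p}=\mathfrak{sp}_{2g_p}$ for $p$ odd and $\mathfrak k_{g_p}=\mathfrak o_{g_p,g_p}$ for $p$ even. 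In every case $\mathfrak k_{g_p}$ is a reductive subalgebra of $\g_{I'}$, it acts on the whole Chevalley--Eilenberg complex through finite-dimensional algebraic representations, and $\g_{I'}$ is an extension of $\mathfrak k_{g_p}$ by an ideal concentrated in strictly positive weights. Passing to $\mathfrak k_{g_p}$-(co)invariants and using reductivity, $H^{\mathrm{CE}}_*(\g_{I'})$ is computed by a complex built, in each homological degree, out of a bounded number of tensor factors of the defining representation $A$ and its dual, tensored with a fixed finite-dimensional contribution coming from the summands $S^{p_j}\times S^{q_j}$ with $\{p_j,q_j\}\neq\{p,q\}$.

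\textbf{The stable range.} The stabilization map enlarges $A$ and $\mathfrak k_{g_p}$ in the standard way. Homological stability for $\mathfrak k_{g_p}$ alone, i.e.\ for the arithmetic groups $GL_{g_p}(\IZ)$, $Sp_{2g_p}(\IZ)$, $O_{g_p,g_p}(\IZ)$, holds in a far wider range than needed (Charney, Borel), so the issue is the invariant factor. Here the first fundamental theorem of invariant theory for $GL$, $Sp$ and $O$, together with its quantitative form describing when the space of invariants of a tensor power stabilizes, shows that the stabilization map on the invariant complex is an isomorphism once $g_p$ exceeds an explicit linear function of the homological degree, this function being one unit smaller in the $GL$ case because the defining $Sp$- and $O$-representations have twice the rank. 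A weight count in the Chevalley--Eilenberg complex bounds the number of tensor factors of the defining representation occurring in homological degree $i$; combining this bound with the invariant-theoretic one yields the isomorphism range $g_p>2i+2$ (resp.\ $g_p>2i+4$) and the epimorphism range $g_p\geq 2i+2$ (resp.\ $g_p\geq 2i+4$), the extra unit being the usual gap between the surjectivity and injectivity ranges of the fundamental theorem.

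\textbf{Main obstacle.} Two points demand genuine work. First, setting up and then exploiting the rational model of $\aut_\partial(N_I)$ when the spheres have unequal dimensions: $\LL_I$ is now genuinely spread over many degrees, so $\g_I=\Derpw(\LL_I)$ is a nontrivially graded Lie algebra rather than one concentrated near the top, and one must check carefully that $q_i<2p_i-1$ really suffices to make the derivation computation behave ``quadratically'' and the $\pi_0$-argument go through. Second, controlling the parasitic summands $S^{p_j}\times S^{q_j}$ with $\{p_j,q_j\}\neq\{p,q\}$: these contribute a fixed, non-stabilizing finite-dimensional piece to $\g_I$, and one must verify that tensoring the invariant complexes with this piece neither introduces new instability nor shifts the range --- i.e.\ that the weight filtration still bounds the number of $A$-tensor factors in homological degree $i$ by $2i+2$ (resp.\ $2i+4$). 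Pinning down these constants exactly, rather than up to an additive error, is the most delicate part of the argument, and I expect it to absorb most of the effort.
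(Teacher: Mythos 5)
Your opening reduction (the Quillen model $\IL_I$, the element $\omega_I$, and the identification of $\pi_*(\aut_\partial(N_I))\otimes\IQ$ with $\Derpw(\IL_I)$) matches the paper, but the pivotal identification you then assert, $H_*(B\aut_\partial(N_I);\IQ)\cong H^{CE}_*(\g_I)$, is false, and the mechanism you propose for the stability range would not work. The Chevalley--Eilenberg homology of $\g_I=\Derpw(\IL_I)$ computes only the rational homology of the universal cover $B\aut_\partial(N_I)\univ$; the homology of $B\aut_\partial(N_I)$ itself is assembled from the covering spectral sequence with $E^2$-page $H_s(\Gamma_I;H^{CE}_t(\g_I))$ (after disposing of the finite kernel of $\pi_0\aut_\partial(N_I)\to\Gamma_I$), and for $s>0$ these twisted homology groups of the arithmetic group $\Gamma_I$ are not controlled by invariant theory. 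The first fundamental theorem for $GL$, $Sp$, $O$ only identifies the invariants, i.e.\ the $s=0$ column; Borel's theorem, which would let you replace $H_s(\Gamma_I;V)$ by $H_s(\Gamma_I;\IQ)\otimes V^{\Gamma_I}$, is itself a stability statement whose range is not $g_p>2i+2$ and cannot be quoted to produce that range. What is actually needed is van der Kallen's and Charney's homological stability with twisted coefficients of finite polynomial degree, which give iso ranges $g>2i+k+2$ (general linear) and $g>2i+k+4$ (hyperbolic) for degree-$\leq k$ coefficient systems; the $+2$ versus $+4$ comes from those theorems, not from the rank of the defining representation as you suggest. Note also that $\mathfrak{gl}_{g_p}$ consists of degree-zero derivations and hence is not a subalgebra of the positively truncated $\g_{I'}$, so the ``classical subalgebra'' you want to take coinvariants against does not sit where you place it.

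Moreover, the two points you defer as ``obstacles'' are precisely the substance of the proof. The paper shows that the CE $l$-chains $C^{CE}_l(\g_I)$ are the value on $\tilde H_*(N_I)$ of a Schur multifunctor that is polynomial of degree $\leq l/2$ (this uses the spread of the grading over degrees $1,\dots,n-2$ and $n>3$); feeding $k=l/2$ into van der Kallen/Charney is what yields $g_p>2i+2$ resp.\ $g_p>2i+4$, and without this degree bound there is no way to ``read off'' those constants. Finally, stability for the chains must still be converted into stability for $H_s(\Gamma_I;H^{CE}_t(\g_I))$; the paper does this with two hyperhomology spectral sequences and a chain-homotopy splitting that requires $\Gamma_I$ to be rationally perfect. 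Your proposal omits this step, and it is not automatic: homological stability with coefficients in a chain complex does not formally pass to its homology.
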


The block diffeomorphisms $\bdiff_\partial (X)$ are the realization of the $\Delta$-group, i.e. a simplicial group without degeneracies, with $k$-simplices, face preserving diffeomorphisms $$\varphi:\Delta^k\times X\rightarrow \Delta^k\times X,$$ such that $\varphi$ is the identity on a neighborhood of $\Delta^k\times \partial X.$ We map a $k$-simplex $\varphi$ to the $k$-simplex in $\bdiff_\partial(N_{I'})$ $$\Delta^k\times (N_I\cup_{\partial_1} V^{p,q})\rightarrow \Delta^k\times (N_I\cup_{\partial_1} V^{p,q}),$$ given by $\varphi$ on $\Delta^k\times N_I$ and the identity on $\Delta^k\times V^{p,q}$, where we use that $N_{I'}\cong N_I\cup_{\partial_1} V^{p,q}.$ This induces the stabilization map $B\bdiff_\partial(N_I)\rightarrow B\bdiff_\partial(N_{I'}).$

\begin{THMB}
The map
$$H_i(B\bdiff_\partial(N_I);\IQ)\rightarrow H_i(B\bdiff_\partial(N_{I'});\IQ)$$
induced by the stabilization map with respect to $V^{p,q},$ where $3\leq p\leq q < 2p-1,$ is an isomorphisms for $g_p>2i+2$ when $2p\neq n$ and $g_p>2i+4$ if $2p=n$ and an epimorphism for $g_p\geq 2i+2$ respectively $g_p\geq 2i+4$.
\end{THMB}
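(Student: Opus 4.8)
The plan is to deduce Theorem B from Theorem A by way of the surgery-theoretic comparison between block diffeomorphisms and homotopy automorphisms. Since $\dim N_I = n \geq 6 \geq 5$ and $N_I$ is simply connected --- indeed $2$-connected, because every $p_i \geq 3$ --- the block structure space $\widetilde{\mathcal{S}}^{\diff}_\partial(N_I)$ of smooth manifolds block homotopy equivalent to $N_I$ rel the boundary sits in a homotopy fibre sequence
$$\widetilde{\mathcal{S}}^{\diff}_\partial(N_I) \longrightarrow B\bdiff_\partial(N_I) \longrightarrow B\baut_\partial(N_I),$$
and the inclusion of constant families $\aut_\partial(N_I) \hookrightarrow \baut_\partial(N_I)$ is a weak equivalence, so the base may be replaced by $B\aut_\partial(N_I)$. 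Gluing the trivial cobordism $V^{p,q}$ to $N_I$ by the identity is compatible with all of this, so the stabilization maps assemble into a map of fibre sequences. It therefore suffices to control --- uniformly in the homological degree --- how the fibre and the induced local system on $B\aut_\partial(N_I)$ behave under stabilization.

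The first step is to determine the fibre rationally. The Sullivan--Wall surgery exact sequence rel the boundary expresses $\pi_k \widetilde{\mathcal{S}}^{\diff}_\partial(N_I)$ in terms of the normal invariants $[\Sigma^k(N_I/\partial N_I), G/O]$ and the Wall groups $L_{n+k}(\IZ[\pi_1 N_I])$. As $\pi_1 N_I = 1$, we have $L_k(\IZ) \otimes \IQ \cong \IQ$ for $4 \mid k$ and $0$ otherwise, rationally $G/O \simeq \prod_{j \geq 1} K(\IQ, 4j)$, and $N_I/\partial N_I$ is the closed manifold $\#_{i \in I}(S^{p_i} \times S^{q_i})$. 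Hence, after rationalization, $\widetilde{\mathcal{S}}^{\diff}_\partial(N_I)$ is a product of Eilenberg--MacLane spaces whose homotopy groups are built functorially out of the graded vector space $H^*(\#_{i \in I}(S^{p_i} \times S^{q_i}); \IQ)$ together with the genus-independent contributions of $L_*(\IZ)$. Consequently $H_t(\widetilde{\mathcal{S}}^{\diff}_\partial(N_I); \IQ)$ is, as a module over $\pi_0 \aut_\partial(N_I)$, obtained by applying a polynomial functor of degree at most $t$ to that cohomology, and under stabilization the cohomology changes by the split injection adjoining the reduced classes of $S^p \times S^q$ in degrees $p$ and $q$. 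In particular these groups, as $I$ varies, form a polynomial coefficient system of degree $\leq t$ with finite-dimensional values.

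Next I would feed this into the Serre spectral sequence of the fibre sequence,
$$E^2_{s,t} = H_s\big(B\aut_\partial(N_I);\, H_t(\widetilde{\mathcal{S}}^{\diff}_\partial(N_I); \IQ)\big) \Longrightarrow H_{s+t}(B\bdiff_\partial(N_I); \IQ),$$
together with the map of spectral sequences induced by stabilization. (Equivalently, one can phrase the computation through a rational model of $B\bdiff_\partial(N_I)$ assembled from the derivation Lie algebra model of $B\aut_\partial(N_I)$ used in the proof of Theorem A and the abelian module coming from $\widetilde{\mathcal{S}}^{\diff}_\partial(N_I)$.) To conclude it is enough to know that $E^2_{s,t}(N_I) \to E^2_{s,t}(N_{I'})$ is an isomorphism, respectively an epimorphism, in the stated range. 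This is a twisted-coefficients refinement of Theorem A: for a polynomial coefficient system $T$ of degree $d$ one expects $H_s(B\aut_\partial(N_I); T) \to H_s(B\aut_\partial(N_{I'}); T')$ to be an isomorphism for $g_p > 2s + 2d + 2$ (resp.\ $g_p > 2s + 2d + 4$ when $2p = n$), with the same shift of $2$ (resp.\ $4$) for epimorphisms. I would obtain this by rerunning the proof of Theorem A with coefficients: the Lie-algebra model of $B\aut_\partial(N_I)$ admits the modules arising from $\widetilde{\mathcal{S}}^{\diff}_\partial(N_I)$ as coefficients, and the same filtrations and connectivity estimates apply, the stability range degrading by twice the polynomial degree. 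Since a coefficient class of internal degree $t$ has polynomial degree at most $t$ while contributing only to total degree $s+t$, the inequalities $d \leq t$ and $s + t \leq i$ make the $E^2$-level maps isomorphisms in total degrees $\leq i$ as soon as $g_p > 2i + 2$ (resp.\ $2i + 4$); a spectral sequence comparison then yields Theorem B.

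The main obstacle is that the fibre itself does \emph{not} satisfy homological stability: whenever one of the dimensions $p$ or $q$ is divisible by $4$ --- or, in the middle case $2p = n$, through the $L$-theory of the top cell --- the rational homotopy of $\widetilde{\mathcal{S}}^{\diff}_\partial(N_I)$ genuinely grows with $g_p$, so one cannot compare Serre spectral sequences with a fibre that is constant in $g_p$. Everything therefore rests on the twisted-coefficients version of Theorem A, and the delicate point is to arrange its range and the spectral sequence comparison so that the contribution of the critical total degree to $H_i(B\bdiff_\partial(N_I))$ is still controlled when $g_p > 2i + 2$ (resp.\ $2i+4$); this bookkeeping is where I expect the real work to lie. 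The remaining ingredients should be routine: naturality of the surgery exact sequence under gluing the trivial cobordism $V^{p,q}$, the equivalence $\baut_\partial(N_I) \simeq \aut_\partial(N_I)$, and passing to the basepoint component of the (possibly disconnected) structure space before rationalizing.
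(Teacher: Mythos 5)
Your strategy coincides with the paper's proof: the fibre of $B\bdiff_\partial(N_I)\to B\aut_\partial(N_I)$ is identified rationally via the surgery exact sequence as a product of Eilenberg--MacLane spaces whose degree-$t$ homology is a polynomial coefficient system of degree $\leq t$, and the Serre spectral sequence is then compared by means of a twisted-coefficients version of Theorem A, obtained exactly as you propose by rerunning its proof (universal-cover spectral sequence, hyperhomology reduction to chains, Van der Kallen/Charney) with the tensor-product coefficient system $C^{CE}_s(\g_I)\otimes H_l(\mathcal{F}_I;\IQ)$. The one detail the paper handles differently from your closing remark is the disconnectedness issue: rather than passing to a component of the structure space, it replaces the base by the finite cover $\bar B\aut_\partial(N_I)$ corresponding to the image of $\pi_1(B\bdiff_\partial(N_I))$, so that the fibre $\mathcal{F}_I$ is connected while the total space, and hence the abutment, is unchanged.
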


The argument is based on \cite{berg12,berg13}, where homological stability for the automorphism spaces of the manifolds $\mathlarger{\#_g}(S^d\times S^d) \smallsetminus \interior(D^{2d})$ is shown. Berglund and Madsen moreover determine the stable homology of the homotopy automorphisms and the block diffeomorphisms. The argument to determine the stable homology relies on \cite{galatiusrandalwilliams} where the stable cohomology of the moduli space of many even-dimensional smooth manifolds is determined. There is not an analogue of these results for odd-dimensional manifolds yet, but the homological stability results \cite{galatiusrandalwilliamshomstab} have been generalized to odd-dimensional manifolds \cite{perlmutter,perlmutter2}.
\subsection*{Outline of the argument}
The main idea of the proof is to consider the universal covering $$B\aut_\partial (N_I)\langle 1 \rangle \rightarrow B\aut_\partial (N_I)$$ or rather, the universal covering spectral sequence with $E^2$-page
$$H_s(\pi_1(B\aut_\partial (N_I));H_t(B\aut_\partial (N_I)\langle 1 \rangle;\IQ))\Rightarrow H_{s+t}(B\aut_\partial (N_I);\IQ).$$
If we can show that the stabilization map induces isomorphisms for large generalized genus $g_p$ in a range of $s$ and $t$, the spectral sequence comparison theorem implies homological stability for the homotopy automorphisms. So we reduced the problem to showing homological stability for the groups $\pi_1(B\aut_\partial (N_I))$ with certain twisted coefficients.

The first step is to determine the homotopy classes of homotopy automorphisms (Section \ref{on mapping class groups}), or rather the quotient acting non-trivially on the homology of the universal covering. More precisely, we determine the image and kernel of the ``reduced homology'' map $$\tilde H:\pi_0(\aut_\partial(N_I))\rightarrow \Aut (\tilde H_*(N_I))$$ to the automorphisms of the reduced homology as a graded group. For the manifolds $N_I$ the kernel is finite. This is in fact a consequence of the connectivity assumption $3\leq p_i\leq q_i < 2p_i-1$. The image which we call $\Gamma_I\subset \Aut (\tilde H_*(N_I))$ is the subgroup respecting the intersection form and when the dimension of $N_I$ is even, a certain tangential invariant. In particular we show later that elements in the kernel of $\tilde H_*$ act trivially on $H_t(B\aut_\partial (N_I)\langle 1 \rangle;\IQ).$ Thus we only have to study the groups $\Gamma_I.$

In Section \ref{automorphisms of hyperbolic modules} we review hyperbolic modules and give a slight generalization in order to describe the $\Gamma_I$ as automorphisms of an object with underlying graded $\IZ$-module $\tilde H_*(N_I)$.

In Section \ref{van der kallens and charneys homological stab results} we use homological stability results by Van der Kallen and Charney to show a homological stability result for the groups $\Gamma_I$ with certain twisted coefficient systems of ``finite degree''.
In Section \ref{On the rational homotopy type of homotopy automorphisms} we review results of Berglund and Madsen in order to show that the $H_t(B\aut_\partial (N_I)\langle 1 \rangle;\IQ)$ are in fact a twisted coefficient system satisfying homological stability for the $\Gamma_I.$ A crucial tool for showing twisted homological stability are Schur multifunctors (defined in Section \ref{Polynomial functors and Schur multifunctors}), which we use to determine the degree of the coefficient systems. In fact developing the Schur multifunctors as a tool to handle the different degrees coming from the homology $\tilde H_*(N_I)$ was one the the main hurdles in generalizing Berglund and Madsen's result.

To show the homological stability for the block diffeomorphisms, we consider the homotopy fibration $$\aut_\partial (N_I)/\bdiff_\partial (N_I) \to B\bdiff_\partial (N_I)\to B\aut_\partial (N_I).$$ We can determine the homology of a component of the homotopy fiber using surgery theoretic methods applied by Berglund and Madsen, which suffices to show homological stability using similar arguments as for the homotopy automorphisms.

\subsection*{Acknowledgments}
This paper is based on the authors PhD thesis, which was supported by the Danish National Research Foundation through the Centre for Symmetry and Deformation (DNRF92).The author wants to thank his advisor Ib Madsen for suggesting this project and his advice throughout working on it. The author moreover wants to thank Alexander Berglund for comments on the presentation of this article.

\section{Polynomial functors and Schur multifunctors}\label{Polynomial functors and Schur multifunctors}

We start by recalling the definition of polynomial functors in the sense of \cite[Section 9]{eilenberg} or rather slightly modified as in \cite[Section 3]{dwyer}. 
Let $T:\mathcal{A}\rightarrow \mathcal{B}$ be a (not necessarily additive) functor between abelian categories. The first cross-effect functor is defined to be $$T^1(X)=\operatorname{kernel}(T(X)\to T(0)),$$ where $X\rightarrow 0$ is the natural map to the zero object in $\mathcal{A}$. For $k>1$, the $k$-th cross-effect functor $$T^k:\mathcal{A}^k\rightarrow \mathcal{B},$$ is uniquely defined up to isomorphism given $T^l$ for $l<k$ by the properties:
\begin{enumerate} 
 \item $T^k(A_1,...,A_k)=T(0)$ if $A_i=0$ for some $i$.
 \item There is a natural isomorphism $$T(A_1\oplus ...\oplus A_k)\cong T(0)\oplus \underset{\{i_1,...,i_r\}}\bigoplus T^r(A_{i_1},...,A_{i_r}),$$ where the sum runs over all non-empty subsets $\{ i_1,...,i_r \}\subset\{ 1,...,k \}$.
\end{enumerate}

\begin{defi}
A functor $T$ is polynomial of degree $\leq k$, if $T^{l}$ is the constant zero functor for $l>k$. 
\end{defi}

An immediate consequence is that a functor is of degree $\leq 0$ if and only if it is constant. The higher cross-effects can be defined using deviations. The $k$-fold deviation of $k$ maps $f_1,...,f_k:A\rightarrow B$ in $\mathcal{A}$ is the map
$$T(f_1\top ... \top f_k):T(A)\rightarrow T(B),$$ given by $$T(f_1\top ... \top f_k)= T(0)\oplus \underset{\{ i_1,...,i_r \}}\bigoplus (-1)^{k-r} T(f_{i_1},...,f_{i_r} ) ,$$ where the sum runs over all non-empty subsets $\{ i_1,...,i_r \}\subset\{ 1,...,k \}$ and $0$ denotes the canonical map $A\rightarrow 0 \rightarrow B.$ Setting $A=A_1\oplus...\oplus A_k$ and denoting by $\pi_i:A\rightarrow A_i$ the projections and by $\iota_i:A_i\rightarrow A$ the inclusions, the $k$-th cross-effect functor is given on objects by $$T^k(A_1,...,A_k)=\operatorname{Image}(T(\iota_1\circ\pi_1\top ... \top \iota_k\circ\pi_k)).$$

The following properties of Polynomial functors are direct consequences of the definition.

\begin{prop}[See e.g. \cite{dwyer}]\mbox{}
\begin{enumerate}
 \item An additive functor is of degree $\leq 1$.
 \item The composition of functors of degree $\leq k$ and $\leq l$ is a functor of degree $\leq kl$.
 \item Let $T:\mathcal{A}\rightarrow\mathcal{B}$ and $R:\mathcal{C}\rightarrow\mathcal{B}$ be of degree $\leq k$ and $\leq l$, respectively. The level-wise sum $$T\oplus R :\mathcal{A}\times \mathcal{C}\rightarrow \mathcal{B}$$ is polynomial of degree  $\leq \operatorname{max}\{k,l \}$
\end{enumerate}
\end{prop}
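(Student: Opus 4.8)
The plan is to verify the three items directly from the cross-effect formalism; (1) and (3) amount to bookkeeping with the defining direct-sum decompositions, whereas (2) needs a short inclusion-exclusion argument, which is where the real work lies. For (1), I would use that an additive functor $T$ sends the zero object to the zero object and satisfies $T(f+g)=T(f)+T(g)$ on morphisms. Hence the second deviation $T(f_1 \top f_2)$, which expands as $T(0)-T(f_1)-T(f_2)+T(f_1+f_2)$, vanishes for all $f_1,f_2\colon A\to B$, so $T^2(A_1,A_2)=\operatorname{Image}\big(T(\iota_1\pi_1 \top \iota_2\pi_2)\big)=0$; equivalently, the natural splitting $T(A_1\oplus A_2)\cong T(A_1)\oplus T(A_2)$ forces $T^r=0$ for all $r\geq 2$. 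Thus $T$ has degree $\leq 1$.

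For (3), I would observe that the level-wise sum is compatible with finite direct sums in each coordinate: $(T\oplus R)\big(\bigoplus_i (A_i,C_i)\big)=T\big(\bigoplus_i A_i\big)\oplus R\big(\bigoplus_i C_i\big)$. Substituting the cross-effect decompositions of $T$ and of $R$ into the right-hand side and comparing with that of $T\oplus R$ yields, for every $r\geq 1$, a natural isomorphism $(T\oplus R)^r\big((A_i,C_i)_i\big)\cong T^r\big((A_i)_i\big)\oplus R^r\big((C_i)_i\big)$. The right-hand side is zero as soon as $r>k$ and $r>l$, so $(T\oplus R)^r=0$ for $r>\max\{k,l\}$; that is, $T\oplus R$ has degree $\leq\max\{k,l\}$.

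The main content — and the step I expect to require the most care — is (2). Fix any integer $N>kl$ and objects $A_1,\dots,A_N$ of $\mathcal A$; I must show the total deviation $(R\circ T)(\iota_1\pi_1\top\cdots\top\iota_N\pi_N)$ acting on $(R\circ T)\big(\bigoplus_{i=1}^N A_i\big)$ vanishes, which is exactly $(R\circ T)^N=0$. Because $T$ has degree $\leq k$, there is a natural decomposition $T\big(\bigoplus_{i=1}^N A_i\big)\cong T(0)\oplus\bigoplus_S T^{|S|}(A_S)$ over subsets $\emptyset\neq S\subseteq\{1,\dots,N\}$ with $|S|\leq k$ (here $A_S$ is the corresponding tuple), and for $J\subseteq\{1,\dots,N\}$ the idempotent $T(\iota_J\pi_J)$ is the projection onto the sub-sum over those $S$ with $S\subseteq J$. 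Applying $R$ and using that $R$ has degree $\leq l$, one obtains a decomposition of $(R\circ T)\big(\bigoplus_i A_i\big)$ into $R(0)$ together with summands indexed by nonempty collections $\mathcal U$ of at most $l$ of the sets $S$ above, on which $(R\circ T)(\iota_J\pi_J)$ is the projection onto those summands all of whose members $S$ lie in $J$. Writing $W_{\mathcal U}=\bigcup_{S\in\mathcal U}S$, the total deviation acts on the $\mathcal U$-summand by the scalar $\sum_{J\supseteq W_{\mathcal U}}(-1)^{N-|J|}$ and on $R(0)$ by $\sum_{J\subseteq\{1,\dots,N\}}(-1)^{N-|J|}$; the second is $0$ since $N\geq 1$, and the first is $0$ unless $W_{\mathcal U}=\{1,\dots,N\}$. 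But $|W_{\mathcal U}|\leq\sum_{S\in\mathcal U}|S|\leq lk<N$, so the total deviation is identically zero. Hence $(R\circ T)^N=0$ for every $N>kl$, and $R\circ T$ has degree $\leq kl$. The one delicate point is the precise identification of how $T(\iota_J\pi_J)$, and then $R$ applied to it, act on the respective direct-sum decompositions; for this I would invoke the standard functoriality of cross-effects recorded in \cite{eilenberg} and \cite{dwyer} rather than rederive it.
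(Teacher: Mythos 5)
The paper states this proposition without proof (it is asserted as a direct consequence of the definitions, with a pointer to \cite{dwyer}), so there is no in-paper argument to compare against; your proof is the standard one and is correct, including the inclusion--exclusion computation in (2) showing the total deviation vanishes on every summand because $|W_{\mathcal U}|\leq kl<N$. The only bookkeeping point worth tightening in (2) is that the index set for the $R$-cross-effect decomposition of $R\big(T(\bigoplus_i A_i)\big)$ also contains the constant summand $T(0)$, so a collection $\mathcal U$ may include it; those summands are killed by the same computation $\sum_{J\subseteq\{1,\dots,N\}}(-1)^{N-|J|}=0$ that you apply to $R(0)$.
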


\begin{example}Let $\operatorname{Mod}(\IZ)$ be the category of finitely generated $\IZ$-modules. An example of a degree $\leq k$ functor is the $k$-fold tensor product
\begin{displaymath}
\begin{aligned}
\otimes: &\operatorname{Mod}(\IZ)^k &&\rightarrow &&&\operatorname{Mod}(\IZ)\\
&(A_1,...,A_k) &&\mapsto &&&\bigotimes_i A_i.
\end{aligned}	
\end{displaymath}\end{example}Schur functors give examples of polynomial functors. Note that the following definitions also make sense for general commutative rings, but we are going to restrict our presentation to the category of graded rational vector spaces ${Vect_*}(\IQ).$ Schur functor are treated for example in \cite{algop}. We could not find any literature on Schur multifunctors and hence state the facts we need here. 

Let $\mathscr{M}=\mathscr{M}(n)\in {Vect_*}(\IQ)$, $n\geq 0$ be a sequence of $\IQ[\Sigma_n]$-modules. We refer to them as $\Sigma_n$-modules but implicitly use the $\IQ[\Sigma_n]$-modules structure, in particular $\otimes_{\Sigma_n}$ refers to the tensor product over $\IQ[\Sigma_n]$. The \emph{Schur functor} given by $\mathscr{M}$ is defined to be the endofunctor of ${Vect_*}(\IQ)$ induced by
$$\mathscr{M}(V)=\bigoplus_k \mathscr{M}(k)\otimes_{\Sigma_k}V^{\otimes k} \text{ for all }V\in {Vect_*}(\IQ),$$
where $V^{\otimes k}$ is the left $\Sigma_k$-module with action induced by the permutation of the factors by the inverse (with sign according to the Koszul sign convention). Note that $\mathscr{M}(0)$ is just a constant summand. A Schur functor $\mathscr{M}$ with $\mathscr{M}(l)$ trivial for $l>k$ is a polynomial functor of degree $\leq k$.

Let $\eta=(n_1,...,n_l),$ $n_1,...,n_l\geq 0$ be a multiindex. Throughout this article we will assume all multiindices to have non-negative entries. We use the following conventions:\begin{displaymath}
\begin{aligned}
	|\eta| &=\sum_{i=1}^l n_i,\\
	\ell(\eta) &=l,\\
	\mu+\eta &=(m_1+n_1,...,m_l+n_l) \text{, for $\mu=(m_1,...,m_l)$},\\
	(V_i)^{\otimes \eta} &=V_1^{\otimes n_1}\otimes ... \otimes V_l^{\otimes n_l} \text{, where }(V_i)\in ({Vect_*}(\IQ))^l,\\
	\Sigma_\eta &= \Sigma_{n_1}\times ... \times \Sigma_{n_l}.
\end{aligned}
\end{displaymath}
Consider a sequence of $\IQ[\Sigma_\eta]$-modules $\mathscr{N}=\mathscr{N}(\eta)\in {Vect_*}(\IQ)$, $\ell(\eta)=l.$ As before, we refer to them as $\Sigma_\eta$-modules. We define the \emph{Schur multifunctor} given by $\mathscr{N}$ on objects by
$$\mathscr{N}(V_1,...,V_l)=\bigoplus_{\ell(\eta)=l} \mathscr{N}(\eta)\otimes_{\Sigma_\eta}(V_i)^{\otimes \eta}\text{ for all }(V_i)\in ({Vect_*}(\IQ))^l.$$
 Similarly a Schur multifunctor $\mathscr{N}$ is a polynomial of degree $\leq k$ if $\mathscr{N}(\eta)$ is trivial for $|\eta|>k$.
\begin{example}
Consider Schur functors $\mathcal{N}_i:Vect_*({\IQ})\rightarrow Vect_*({\IQ}),$ $i=1,...,l$ The tensor product $$\bigotimes \mathcal{N}_i:Vect_*({\IQ})^l\rightarrow Vect_*(\IQ)$$ is a Schur multifunctor with $(\bigotimes \mathcal{N}_i)(\eta)=\bigotimes\mathcal{N}_i(n_i).$
\end{example}
We define the tensor product of $\mathscr{M}=\mathscr{M}(\mu)$ and $\mathscr{N}=\mathscr{N}(\eta)$ as $$\mathscr{M}\otimes \mathscr{N}(\nu):= \bigoplus_{\substack{\mu'+\eta'=\nu}}\operatorname{Ind}^{\Sigma_\nu}_{\Sigma_{\mu'}\times \Sigma_{\eta'}}\mathscr{M}(\mu')\otimes \mathscr{N}(\eta').$$
The Schur functor defined by this tensor product is indeed (up to natural isomorphism) the level-wise tensor product of the two functors, as we see by the isomorphisms for $\mu'+\eta'=\nu$ with $\ell(\mu')=\ell(\eta')=\ell(\nu)$
\begin{displaymath}
\begin{aligned}
&\left( \operatorname{Ind}^{\Sigma_\nu}_{\Sigma_{\mu'}\times \Sigma_{\eta'}}\mathscr{M}(\mu')\otimes \mathscr{N}(\eta') \right)\otimes_{\Sigma_\nu}(V_i)^{\otimes \nu} \\&= \left( \mathscr{M}(\mu')\otimes \mathscr{N}(\eta') \otimes_{\Sigma_{\mu'}\times \Sigma_{\eta'}} \IQ[\Sigma_\nu] \right)\otimes_{\Sigma_\nu}(V_i)^{\otimes \nu} \\
&\cong \left( \mathscr{M}(\mu')\otimes \mathscr{N}(\eta') \right)\otimes_{\Sigma_{\mu'}\times \Sigma_{\eta'}}\left( (V_i)^{\otimes \mu'}\otimes (V_i)^{\otimes \eta'}\right)\\
&\cong \left( \mathscr{M}(\mu')\otimes_{\Sigma_{\mu'}}(V_i)^{\otimes \mu'} \right) \otimes \left( \mathscr{N}(\eta') \otimes_{\Sigma_{\eta'}} (V_i)^{\otimes \eta'}\right).
\end{aligned}	
\end{displaymath}

The tensor powers of a $\mathscr{N}=\mathscr{N}(\eta)$ are (up to natural isomorphism) explicitly described by
\begin{equation}\label{schurtensorp}
\mathscr{N}^{\otimes r}(\nu)=\bigoplus \operatorname{Ind}_{\Sigma_{\eta_1}\times...\times \Sigma_{\eta_r}}^{\Sigma_\nu}\mathscr{N}(\eta_1)\otimes ... \otimes \mathscr{N}(\eta_r),
\end{equation}
where the sum runs over all $r$-tuples $(\eta_1,...,\eta_r),$ where $\ell(\eta_i)=l,$ such that $\Sigma_{i=1}^r \eta_i=\nu$.

Now consider a Schur functor $\mathscr{M}=\mathscr{M}(m)$ and a Schur multifunctor $\mathscr{N}=\mathscr{N}(\eta)$. The composition is a Schur multifunctor isomorphic to the Schur multifunctor given by
\begin{equation}\label{schurcomp}
(\mathscr{M}\circ \mathscr{N})(\nu)=\bigoplus_{r} \mathscr{M}(r) \otimes_{\Sigma_r} \bigoplus \operatorname{Ind}_{\Sigma_{\eta_1}\times...\times \Sigma_{\eta_r}}^{\Sigma_\nu}\mathscr{N}(\eta_1)\otimes ... \otimes \mathscr{N}(\eta_r),  
\end{equation}
where the second sum runs over all $r$-tuples $(\eta_1,...,\eta_r),$ where $\ell(\eta_i)=l,$ such that $\Sigma_{i=1}^r \eta_i=\nu$. The action of $\Sigma_r$ is by permuting the tuples $(\eta_1,...,\eta_r)$ by the inverse. Indeed as we check using (\ref{schurtensorp}):
\begin{displaymath}
\begin{aligned}
(&\mathscr{M}\circ \mathscr{N})(V_i) = \bigoplus_r \mathscr{M}(r) \otimes_{\Sigma_r} \mathscr{N}(V_i)^{\otimes r}\cong \bigoplus_r \mathscr{M}(r) \otimes_{\Sigma_r} \mathscr{N}^{\otimes r}(V_i)\\
&\cong \bigoplus_{r,\nu} \mathscr{M}(r) \otimes_{\Sigma_r} \left( \mathscr{N}^{\otimes r}(\nu)\otimes_{\Sigma_\nu} (V_i)^{\otimes \nu} \right)\\
&\cong \bigoplus_{r,\nu}  \mathscr{M}(r) \otimes_{\Sigma_r} \left( \bigoplus_{\Sigma_{i=1}^r \eta_i=\nu} \operatorname{Ind}_{\Sigma_{\eta_1}\times...\times \Sigma_{\eta_r}}^{\Sigma_\nu}\mathscr{N}(\eta_1)\otimes ... \otimes \mathscr{N}(\eta_r)\otimes_{\Sigma_\nu} (V_i)^{\otimes \nu} \right)
\\
&\cong \bigoplus_\nu  \bigoplus_r\mathscr{M}(r) \otimes_{\Sigma_r} \left( \bigoplus_{\Sigma_{i=1}^r \eta_i=\nu} \operatorname{Ind}_{\Sigma_{\eta_1}\times...\times \Sigma_{\eta_r}}^{\Sigma_\nu}\mathscr{N}(\eta_1)\otimes ... \otimes \mathscr{N}(\eta_r) \otimes_{\Sigma_\nu} (V_i)^{\otimes \nu}\right) .
\end{aligned}
\end{displaymath}

\begin{rem}
We will later use Schur (multi)functors with domain the category of rational vector spaces - just consider them as graded rational vector spaces concentrated in degree $0$.
\end{rem}

\section{Automorphisms of hyperbolic modules over the integers}\label{automorphisms of hyperbolic modules}

In this section we review hyperbolic modules in the sense of \cite{MR632404} in the special case with ground ring the integers. Fix a $\lambda\in \{+1,-1\}.$ Let $\Lambda\subset \IZ,$ be an additive subgroup, called the \emph{form parameter}, such that \begin{equation}\label{Lambda}
\{ z - \lambda z | z \in \IZ\}\subset \Lambda \subset \{ z\in \IZ|z=-\lambda z\}. 
\end{equation}
\begin{defi}[\cite{MR632404}]
A $\Lambda$-quadratic module is a pair $(M,\mu)$, where $M$ is a $\IZ$-module and $\mu$ is a bilinear form, i.e. a homomorphism $$\mu:M\otimes M\rightarrow \IZ.$$\end{defi}
To a $\Lambda$-quadratic module $(M,\mu)$ we associate a \emph{$\Lambda$-quadratic form} $$q_\mu:M\rightarrow \IZ/\Lambda\text{, } q_\mu(x)=[\mu(x,x)]$$ and a \emph{$\lambda$-symmetric bilinear form} $$\langle - , - \rangle_\mu:M\otimes M\rightarrow \IZ,$$ defined by $\langle x , y \rangle_\mu=\mu(x,y)+\lambda \mu(y,x).$ ($\lambda$-symmetric bilinear form like this are called even.) We call a finitely generated projective $\Lambda$-quadratic module $(M,\mu)$ \emph{non-degenerate}, if the map $$M\rightarrow M^*\text{, }x\mapsto \langle x,-\rangle_\mu $$ is an isomorphism.

Denote by $Q^\lambda(\IZ,\Lambda)$ the category non-degenerate $\Lambda$-quadratic modules and morphisms linear maps respecting the associated $\lambda$-symmetric bilinear form and the associated $\Lambda$-quadratic form.

Given a finitely generated $\IZ$-module $M$, we define a non-degenerate $\Lambda$-quadratic module $H(M)=(M\oplus M^*,\mu_M),$ where $\mu_M((x,f),(y,g))=f(y).$ We call $H(M)$ the \emph{hyperbolic module} on $M.$ A $\Lambda$-quadratic module is called \emph{hyperbolic}, if it is isomorphic to $H(N)$ for some finitely generated $\IZ$-module $N$. Let $\{e_i\}$ be the standard basis for $\IZ^g$ and $\{f_i\}$ the dual basis of $(\IZ^g)^*.$ Using this basis we consider the automorphisms of the $\Lambda$-quadratic module $H(\IZ^g)$ as a subgroup of $Gl_{2g}(\IZ).$ The subgroups can be described as follows:

\begin{prop}[{\cite[Cor. 3.2.]{MR632404}}]
The automorphisms of $H(\IZ^g)$ in $Q^{\lambda}(\IZ,\Lambda)$ are isomorphic to the subgroup of $Gl_{2g}(\IZ)$ consisting of matrices $\begin{pmatrix}
A & B   \\
C & D \end{pmatrix}$ such that
\begin{displaymath}
	\begin{aligned}
		&D^TA + \lambda B^TC=1 \\
		&D^TB+ \lambda B^T D=0 \\
		&A^t C + \lambda C^T A=0\\
		&\text{$C^T A$ and $D^TB$ have diagonal entries in $\Lambda.$}
	\end{aligned}
\end{displaymath}

\end{prop}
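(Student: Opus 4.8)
The statement to prove is the description of automorphisms of $H(\IZ^g)$ as a matrix subgroup of $Gl_{2g}(\IZ)$ with the four listed conditions. Let me think about how to prove this.The plan is a direct matrix computation in the coordinates furnished by the bases $\{e_i\}$ and $\{f_i\}$. First I would write a vector of $H(\IZ^g)=\IZ^g\oplus(\IZ^g)^*\cong\IZ^{2g}$ as $v=\begin{pmatrix} x\\ \xi\end{pmatrix}$ with $x,\xi\in\IZ^g$, so that $\mu(v,w)=\xi^{T}y$ for $w=\begin{pmatrix} y\\ \eta\end{pmatrix}$; that is, $\mu(v,w)=v^{T}Jw$ with $J=\begin{pmatrix} 0&0\\ I&0\end{pmatrix}$. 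Consequently the associated $\lambda$-symmetric form is represented by $B_\lambda=J+\lambda J^{T}=\begin{pmatrix} 0&\lambda I\\ I&0\end{pmatrix}$ and the associated quadratic form is $q_\mu(v)=[\,v^{T}Jv\,]=[\,\xi^{T}x\,]\in\IZ/\Lambda$. Since the proposition concerns matrices $\gamma=\begin{pmatrix} A&B\\ C&D\end{pmatrix}\in Gl_{2g}(\IZ)$, such a $\gamma$ defines an automorphism of $H(\IZ^g)$ in $Q^{\lambda}(\IZ,\Lambda)$ exactly when it preserves both forms, i.e. $\gamma^{T}B_\lambda\gamma=B_\lambda$ and $v^{T}\gamma^{T}J\gamma\,v\equiv v^{T}Jv\pmod{\Lambda}$ for every $v\in\IZ^{2g}$ (the inverse map is then automatically form-preserving as well).

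The second step is to expand $\gamma^{T}B_\lambda\gamma=B_\lambda$ blockwise. One computes
\[
\gamma^{T}B_\lambda\gamma=\begin{pmatrix} \lambda A^{T}C+C^{T}A & \lambda A^{T}D+C^{T}B\\ \lambda B^{T}C+D^{T}A & \lambda B^{T}D+D^{T}B\end{pmatrix},
\]
and comparing entries with $B_\lambda=\begin{pmatrix} 0&\lambda I\\ I&0\end{pmatrix}$, multiplying through by $\lambda$ where convenient, yields exactly $A^{T}C+\lambda C^{T}A=0$ (upper-left block), $D^{T}B+\lambda B^{T}D=0$ (lower-right block) and $D^{T}A+\lambda B^{T}C=1$ (lower-left block). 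The upper-right block is the transpose of the lower-left one, hence is automatically $\lambda I$ and contributes no further condition. This accounts for the first three listed relations.

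Finally I would translate the quadratic-form condition. Here $\gamma^{T}J\gamma-J=\begin{pmatrix} C^{T}A & C^{T}B\\ D^{T}A-I & D^{T}B\end{pmatrix}$, so, using that a $1\times1$ block equals its transpose,
\[
v^{T}(\gamma^{T}J\gamma-J)v=x^{T}C^{T}Ax+\xi^{T}D^{T}B\xi+\xi^{T}\bigl(B^{T}C+D^{T}A-I\bigr)x.
\]
The first relation gives $B^{T}C+D^{T}A-I=(1-\lambda)B^{T}C$, whose entries lie in $\{z-\lambda z\mid z\in\IZ\}\subseteq\Lambda$ by \eqref{Lambda}, so the mixed term lies in $\Lambda$. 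The third and second relations say precisely that $S=C^{T}A$ and $S=D^{T}B$ satisfy $S^{T}=-\lambda S$, so $S_{ij}+S_{ji}=(1-\lambda)S_{ij}\in\Lambda$; hence the off-diagonal parts of $x^{T}C^{T}Ax$ and $\xi^{T}D^{T}B\xi$ also lie in $\Lambda$, and
\[
v^{T}(\gamma^{T}J\gamma-J)v\equiv\sum_i (C^{T}A)_{ii}\,x_i^{2}+\sum_j (D^{T}B)_{jj}\,\xi_j^{2}\pmod{\Lambda}.
\]
Because $\Lambda$ is closed under multiplication by integers, this vanishes modulo $\Lambda$ for all $v$ if and only if every diagonal entry of $C^{T}A$ and of $D^{T}B$ lies in $\Lambda$ (necessity by taking $v=e_i$), which is the fourth listed condition.

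The computation is entirely routine; the only steps requiring any attention are noticing that the upper-right block of $\gamma^{T}B_\lambda\gamma=B_\lambda$ is redundant, and the systematic use of the containment $\{z-\lambda z\}\subseteq\Lambda$ to absorb all off-diagonal contributions of the quadratic form into the form parameter, so that $q_\mu$-invariance collapses to a constraint on diagonal entries only. This is Corollary 3.2 of \cite{MR632404}, to which one may alternatively simply refer.
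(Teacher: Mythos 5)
Your computation is correct and complete: the identification $\mu(v,w)=v^{T}Jw$ with $J=\left(\begin{smallmatrix}0&0\\ I&0\end{smallmatrix}\right)$, the blockwise expansion of $\gamma^{T}(J+\lambda J^{T})\gamma=J+\lambda J^{T}$ (with the correct observation that the upper-right block is redundant, being $\lambda$ times the transpose of the lower-left one), and the reduction of $q_\mu$-invariance to the diagonal entries of $C^{T}A$ and $D^{T}B$ via the containment $\{z-\lambda z\mid z\in\IZ\}\subset\Lambda$ and the fact that $\Lambda$, being an additive subgroup of $\IZ$, is an ideal. The paper itself gives no proof of this proposition --- it is quoted verbatim from Bak's Corollary 3.2 --- so there is no argument to compare against; your direct verification is the standard one and fills in exactly what the citation leaves implicit. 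The only point worth making explicit, which you gesture at but do not state, is that the four matrix conditions together with $\det\gamma=\pm1$ are also \emph{sufficient} in the sense that invertibility is automatic: $\gamma^{T}B_\lambda\gamma=B_\lambda$ with $B_\lambda$ unimodular forces $\det\gamma=\pm1$, so the conditions really do carve out a subgroup of $Gl_{2g}(\IZ)$ rather than merely a submonoid of form-preserving endomorphisms.
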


Note that if $\lambda=1$ we necessarily have $\Lambda=0.$ When $\lambda=-1,$ the condition (\ref{Lambda}) implies that $2\IZ\subset \Lambda \subset \IZ,$ thus we have the two cases $\Lambda=\IZ$ and $\Lambda=2\IZ.$ Thus we can list the automorphisms of hyperbolic modules:
\begin{enumerate}\item[]
	\item When $\lambda=1$ and $\Lambda=0,$ then $\Aut(H(\IZ^g))=O_{g,g}(\IZ)$ in $Q^{1}(\IZ,0).$ 
\item When $\lambda=-1$ and $\Lambda=\IZ,$ then $\Aut(H(\IZ^g))=Sp_{2g}(\IZ)$ in $Q^{-1}(\IZ,\IZ).$
\item When $\lambda=-1$ and $\Lambda=2\IZ,$ then $\Aut(H(\IZ^g))$ in $Q^{-1}(\IZ,2\IZ)$ is the subgroup of $Sp_{2g}(\IZ)$ described as:
$$\left \{\begin{pmatrix}
A & B   \\
C & D \end{pmatrix}\in Sp_{2g}(\IZ)| C^TA \text{ and } D^TB \text{ have even entries at the diagonal} \right \}.$$
\end{enumerate}

Let $N$ be a ($d-1$)-connected $2d$-manifold. Wall \cite{MR0145540} has shown that the automorphisms of the homology realized by diffeomorphisms are the automorphisms of a $\Lambda$-quadratic module with underlying $\IZ$-module $H_d(N).$ Later we show a similar statement for connected sums of products of spheres. For this we need a slight variation of $\Lambda$-quadratic modules.

Let $n\in\IN$ and $\Lambda\subset \IZ,$ be an additive subgroup, such that \begin{equation*}
\{ z - (-1)^{\frac{n}{2}}z | z \in \IZ\}\subset \Lambda \subset \{ z\in \IZ|z=-(-1)^{\frac{n}{2}} z\}. 
\end{equation*} Note that in the case that $n$ is odd $\Lambda$ has to be the trivial group, in fact in that case it will not be part of the following definitions at all, but we keep it in the notion for convenience.
\begin{defi}A \emph{graded $\Lambda$-quadratic module} is a pair $(M,\mu)$, where $M$ is a graded $\IZ$-modules and $\mu$ bilinear $n$-pairing, i.e. a degree $0$ homomorphism 
$$\mu: M\otimes M \rightarrow \IZ[n].$$
\end{defi}
We associate to $(M,\mu)$ a \emph{symmetric bilinear $n$-pairing}
$$\langle - , - \rangle_\mu:M\otimes M\rightarrow \IZ[n],$$ defined by $\langle x , y \rangle_\mu=\mu(x,y)+(-1)^{|x||y|} \mu(y,x).$
When $n=2d$, we associate a \emph{$\Lambda$-quadratic form} $$q_\mu:M_d\rightarrow \IZ/\Lambda\text{, } q_\mu(x)=[\mu(x,x)]$$  We call a finitely generated projective graded $\Lambda$-quadratic module $(M,\mu)$ non-degenerate, if the map $$M\rightarrow \hom (M,\IZ[n])\text{, }x\mapsto \langle x,-\rangle_\mu $$ is an isomorphism.

For $n$ even, we define $Q_*^{n}(\IZ,\Lambda)$ to be the category whose objects are non-degenerate graded $\Lambda$-quadratic modules $(M,\mu)$, where $(M,\mu)$ is finitely generated as a $\IZ$-module and the morphisms respect $q_\mu$ and $\langle-,-\rangle_\mu.$

For $n$ odd, we define $Q_*^{n}(\IZ,\Lambda)$ to be the category whose objects are non-degenerate graded $\Lambda$-quadratic modules $(M,\mu)$, where $(M,\mu)$ is finitely generated as a $\IZ$-module and the morphisms respect $\langle-,-\rangle_\mu.$

Let $Q_+^{n}(\IZ,\Lambda)$ be the full subcategory with objects concentrated in positive degrees and hence necessarily concentrated in degrees $1,...,n-1$.

For $0<p_i\leq q_i,$ $i\in I$ such that $p_i+q_i=n$ and $|I|$ finite, we define a graded $\Lambda$-quadratic module $$H_I=H(\IZ^{g_1}[1]\oplus ...\oplus \IZ^{g_{\lfloor n/2 \rfloor}}[\lfloor n/2 \rfloor]\oplus \hom_{\IZ}(\IZ^{g_{\lfloor n/2 \rfloor}}[\lfloor n/2 \rfloor ],\IZ[n])\oplus ...\oplus\hom_{\IZ}(\IZ^{g_{1}}[1],\IZ[n])),$$ where $$g_k = \{i\in I|p_i=k\}.$$ Denote by $\{a_i\}$ standard basis for $\IZ^{g_1}[1]\oplus ...\oplus \IZ^{g_{\lfloor n/2 \rfloor}}[\lfloor n/2\rfloor]$ and by $\{b_i\}$ the dual basis. The pairing $\mu_{H_I}=\mu_I$ then given by $$\mu_I(a_i,b_j)=b_j(a_i)=\delta_{i,j}\text{ and } \mu_I(a_i,a_j)=\mu_I(b_i,b_j)=0.$$

Denote by $\Gamma_I=\Aut(H_I)$ in $Q_+^{n}(\IZ,\Lambda).$ We get the following cases:
\begin{enumerate}\item[]
	\item When $n$ is odd we get $$\Gamma_I\cong\prod_{k=1}^{\lfloor n/2\rfloor}Gl_{g_k}(\IZ).$$
	\item When $n=2d$ and $d$ is even we necessarily have $\Lambda=0$ and $$\Gamma_I\cong O_{g_d,g_d}(\IZ)\times \prod_{k=1}^{n/2-1}Gl_{g_k}(\IZ).$$
	\item Similarly for $n=2d$ with $d$ odd the only cases are $\Lambda=\IZ,2\IZ$ and we just get a products of general linear groups and $Sp_{2g_d}(\IZ)$ respectively the subgroup described in the list of automorphism groups above under point 3.
\end{enumerate}

Berglund and Madsen call group $G$ \emph{rationally} perfect, if $H^1(G;V)=0$ for any finite dimensional rational $G$-representation $V.$ We later need that the automorphism groups of graded hyperbolic modules are rationally perfect.

\begin{lem}\label{gammaIh1}The groups $\Gamma_I$ are rationally perfect.
\end{lem}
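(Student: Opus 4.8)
The plan is to reduce the statement to the classical fact that the arithmetic groups appearing as factors of $\Gamma_I$ are rationally perfect, and then handle the group extension. Recall from the explicit list above that $\Gamma_I$ is, in every case, a finite product of groups of the form $Gl_{g_k}(\IZ)$ together with one ``top'' factor that is $Gl_{g_d}(\IZ)$ (when $n$ is odd — but then there is no distinguished top factor), $O_{g_d,g_d}(\IZ)$, $Sp_{2g_d}(\IZ)$, or the level-type subgroup of $Sp_{2g_d}(\IZ)$ described in point 3 of the list of automorphism groups. So the first step is to record two closure properties of the class of rationally perfect groups: (i) if $G_1$ and $G_2$ are rationally perfect then so is $G_1\times G_2$, and (ii) if $1\to N\to G\to Q\to 1$ is exact with $N$ and $Q$ rationally perfect, then $G$ is rationally perfect. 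Property (i) follows because a finite-dimensional rational $(G_1\times G_2)$-representation $V$ restricted to $G_1$ is a finite-dimensional rational $G_1$-representation, so $V^{G_1}=V$ up to the $G_1$-action being trivial — more precisely one uses the Lyndon–Hochschild–Serre spectral sequence for $N=G_1$, $Q=G_2$, which also proves (ii): the five-term exact sequence gives $0\to H^1(Q;V^N)\to H^1(G;V)\to H^1(N;V)^Q$, and $V^N$ is a finite-dimensional rational $Q$-representation while $V$ is a finite-dimensional rational $N$-representation, so both outer terms vanish.

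Given these closure properties, it suffices to show that each individual factor is rationally perfect. For the general linear factors $Gl_{m}(\IZ)$ and the symplectic factors $Sp_{2m}(\IZ)$ this is a standard consequence of the Borel vanishing / stability theorems for arithmetic groups together with the congruence subgroup property: more directly, Berglund and Madsen establish exactly this in \cite{berg12,berg13} (indeed rational perfectness of $Gl_m(\IZ)$ and $Sp_{2m}(\IZ)$, and of $O_{m,m}(\IZ)$, is used there), so I would cite that. The remaining factor is the finite-index subgroup of $Sp_{2g_d}(\IZ)$ from point 3 — the preimage of the condition that $C^TA$ and $D^TB$ have even diagonal. Since it is finite index in $Sp_{2g_d}(\IZ)$, and finite-index subgroups of rationally perfect arithmetic groups are again rationally perfect (this follows from Borel's theorem on the stability of rational cohomology of arithmetic groups, or alternatively from the fact that $H^1$ of such a group with rational coefficients and that of a finite-index subgroup agree after taking invariants — one uses a transfer argument: $H^1(G;V)$ injects into $H^1(G';V)$ for $G'\leq G$ of finite index when $V$ is a rational vector space), it too is rationally perfect. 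The case $n$ odd, being just a product of $Gl_{g_k}(\IZ)$'s, and the case $d$ even, a product of $Gl_{g_k}(\IZ)$'s with one $O_{g_d,g_d}(\IZ)$, then follow immediately from the closure property (i).

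The main obstacle I expect is the last factor: verifying that the even-diagonal subgroup of $Sp_{2g_d}(\IZ)$ — which is precisely the relevant $\Gamma_I$ when $\Lambda=2\IZ$ — is rationally perfect. One cannot simply quote a named arithmetic group here; the cleanest route is to observe it is finite index in $Sp_{2g_d}(\IZ)$ and invoke the stability of rational cohomology of arithmetic groups under passage to finite-index subgroups. An honest version of the transfer argument: for $G'\le G$ of finite index $[G:G']$ and $V$ a $\IQ[G]$-module, the composite $H^1(G;V)\xrightarrow{\mathrm{res}}H^1(G';V)\xrightarrow{\mathrm{cor}}H^1(G;V)$ is multiplication by $[G:G']$, hence an isomorphism, so $\mathrm{res}$ is injective; thus if every finite-dimensional rational $G'$-representation has vanishing $H^1$ then so does every finite-dimensional rational $G$-representation. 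Here I need it the other way — $G'$ perfect given $G$ perfect — which instead requires that an arbitrary finite-dimensional rational $G'$-representation be realized inside (a summand of) the restriction of a rational $G$-representation; this is where Borel's computation (that $H^1(G';\IQ)=0$ and more generally the stable rational cohomology is generated by restriction from $G$) does the real work, and I would lean on \cite{berg12} where this precise point is addressed, rather than reproving it.
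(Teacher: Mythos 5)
Your overall strategy --- reduce to the factors of $\Gamma_I$ via closure of rational perfectness under products and extensions, quote Borel for the arithmetic factors, and handle the even-diagonal level subgroup by finite index --- is the same as the paper's. The obstacle you single out as the main one is in fact the easy part: for $G'\le G$ of finite index and $V$ a finite-dimensional rational $G'$-representation, the induced module $\operatorname{Ind}_{G'}^{G}V$ is again a finite-dimensional rational $G$-representation, and Shapiro's lemma gives $H^1(G';V)\cong H^1(G;\operatorname{Ind}_{G'}^{G}V)=0$; so finite-index subgroups of rationally perfect groups are rationally perfect, with no need to realize $V$ inside a restriction or to invoke Borel's stability. (Alternatively, Borel's vanishing applies directly to the level subgroup for $g\ge 2$, since it is itself an arithmetic subgroup.)

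The genuine gap is the low-rank factors. The lemma is asserted for all $\Gamma_I$, hence for arbitrary, in particular small, $g_k$, and rational perfectness is used later for the actual $I$ at hand, not only in the stable range. The results you invoke do not cover $Gl_1(\IZ)$, $Gl_2(\IZ)$, $O_{1,1}(\IZ)$, $Sp_2(\IZ)$, or the $\Lambda=2\IZ$ subgroup of $Sp_2(\IZ)$: the rank-one lattices $Sl_2(\IZ)\cong Sp_2(\IZ)$ lie outside the range of Borel's theorem, and Bass--Milnor--Serre starts at $Sl_3(\IZ)$. The paper's proof spends most of its length on exactly these cases: $Gl_1(\IZ)$ and $O_{1,1}(\IZ)\cong C_2\times C_2$ are finite, hence rationally perfect; $Sl_2(\IZ)$ is an extension of $C_2*C_3$ by $C_2$, and $C_2*C_3$ is shown to be rationally perfect by a Mayer--Vietoris argument; $Gl_2(\IZ)$ and $Sp_2(\IZ)$ then follow by the extension property, and the $\Lambda=2\IZ$ case of $\Aut(H(\IZ))$ by finite index in $Sp_2(\IZ)$. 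You would need to add an argument of this kind for your proof to cover all $I$.
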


\begin{proof}We begin by observing that being rationally perfect is stable under group extensions, i.e. if in a group extension $$0\rightarrow K \rightarrow G \rightarrow C \rightarrow 0,$$ $K$ and $C$ are rationally perfect, then so is $G.$ This follows from the Lyndon spectral sequence, since $H^1(C;H^0(K;V))$ and $H^0(C;H^1(K;V))$ are trivial for any finite dimensional rational $G$-representation $V.$ In particular products of rationally perfect groups are rationally perfect. Moreover we observe that finite groups are rationally perfect.

It follows from Borel's work on the cohomology of arithmetic groups that the automorphism groups $\Aut(H(\IZ^g))$ of the hyperbolic modules $H(\IZ^g)$ in $Q^\lambda(\IZ,\Lambda)$ are rationally perfect for $g\geq 2$ (see e.g. \cite{berg13}).

In \cite{bassmilnorserre} it is shown that $Sl_g(\IZ)$ is rationally perfect for $g\geq 3$. Since $Sl_g(\IZ)$ is an index two normal subgroup of $Gl_g(\IZ),$ this now also implies that $Gl_g(\IZ)$ is rationally perfect for $g\geq 3.$

Recall that the $\Gamma_I$ are products of $\Aut(H(\IZ^k))$ and $Gl_l(\IZ)$-s. Since $Gl_1(\IZ)$ is finite and hence rationally perfect, to finish the proof we have to show that $\Aut(H(\IZ))$ and $Gl_2(\IZ)$ are rationally perfect.

The group $Sl_2(\IZ)$ is an extension $$0\rightarrow C_2\rightarrow Sl_2(\IZ) \rightarrow C_2 * C_3 \rightarrow 0$$ and $C_2 * C_3$ can be seen to be rationally perfect using a Mayer-Vietoris argument. Hence $Sl_2(\IZ)$ and also $Gl_2(\IZ)$ are rationally perfect.

The group $\Aut(H(\IZ))$ in $Q^{-1}(\IZ,\IZ)$ is $Sp_2(\IZ),$ which is isomorphic to $Sl_2(\IZ).$ For $\Lambda=2\IZ$ it follows that $\Aut(H(\IZ))$ is rationally perfect because it is a finite index subgroup of $Sp_2(\IZ).$

Recall that for $\lambda=1,$ we necessarily have $\Lambda=0$ and $\Aut(H(\IZ))\cong O_{1,1}(\IZ)\cong C_2\times C_2$ and hence we are done.
\end{proof}

\section{Van der Kallen's and Charney's homological stability results}\label{van der kallens and charneys homological stab results}
In this section we recall van der Kallen's homological stability for general linear groups and Charney's homological stability for automorphisms of hyperbolic quadratic modules. We combine them to homological stability for the $\Gamma_I$ defined above with certain coefficient systems induced by polynomial functors.
\begin{rem}
Charney's results hold for Dedekind domains with involutions and van der Kallen's for associative rings with finite stable range, but we restrict our presentation to $\IZ$ (with trivial involution).	
\end{rem}

We begin by reviewing the notion of coefficient systems as discussed in \cite{dwyer}.
A coefficient system for $\{ Gl_g(\IZ) \}_{g \geq 1}$ is a sequence of $Gl_g(\IZ )$-modules $\{ \rho_g\}_{g \geq 1}$ together with $ Gl_g(\IZ) $-maps $F_g:\rho_g\rightarrow I^* (\rho_{g+1})$, where $I^*$ denotes the restriction via the upper inclusion $I:Gl_g(\IZ)\hookrightarrow Gl_{g+1}(\IZ)$. We denote the system by $\rho$ and call the maps $F_g$ structure maps. A map of coefficient systems $\rho$ and $\rho'$ is a collection of $Gl_g(\IZ)$-maps $\{\tau_g\}_{g \geq 1}$ such that they commute with the structure maps.  The level-wise kernels and  cokernerls are again coefficient systems with the obvious structure maps.
Denote by $J:Gl_g(\IZ)\rightarrow Gl_{g+1}(\IZ)$ the lower inclusion map. For a coefficient system $\rho$ we define the shifted system $\Sigma \rho$ by $\Sigma \rho_g := J^* (\rho_{g+1})$ with structure maps $\Sigma F_g:= J^* F_{g+1}: J^* (\rho_{g+1})\rightarrow I^*J^* (\rho_{g+2})$ . Denote by $s_g\in Gl_g(\IZ)$ the element permuting the last two standard basis elements.  We call a coefficient system central, if $s_{g+2}$ acts trivially on the image of $F_{g+1}F_g:\rho_g\rightarrow \rho_{g+2}$. Denote by $e_{g-1,g}\in Gl_g(\IZ)$ the element sending all but the $g$-th standard basis element to itself and the $g$-th $e_g$ to $e_{g-1}+e_g$. We call a central coefficient system strongly central, if $e_{g+1,g+2}$ acts trivially on the image of $F_{g+1}F_g:\rho_g\rightarrow \rho_{g+2}.$

Let $c_g\in Gl_g(\IZ)$ $(g>1)$ be the element sending the i-th standard basis element to the $(i+1)$-st and the $g$-th to the first. 

Denote by $\mu(c_g)$ the multiplication from the left by $c_g$. Then the following holds:
\begin{lem}[{\cite[Lemma 2.1.]{dwyer}}]
Let $\rho$ be a central coefficient system. Then we have a map of coefficient systems $\tau:\rho\rightarrow\Sigma\rho$, defined by
$$\tau_g:\rho_g\xrightarrow{F_g} I^*(\rho_{g+1})\xrightarrow{\mu (c_{g+2})} {J^*}(\rho_{g+1})=\Sigma \rho_g.$$
\end{lem}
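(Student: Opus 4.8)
The plan is to verify directly, from the definition of a map of coefficient systems recalled above, that $\tau=\{\tau_g\}$ is one. Here $\mu(c_{g+1})$ is to be read as left translation by the cyclic permutation matrix $c_{g+1}\in Gl_{g+1}(\IZ)$ — necessarily an element of $Gl_{g+1}(\IZ)$, since it must intertwine two twisted $Gl_g(\IZ)$-actions on the single module $\rho_{g+1}$ so that the composite lands in $\Sigma\rho_g=J^*(\rho_{g+1})$. Two things then need checking: (i) each $\tau_g$ is a $Gl_g(\IZ)$-equivariant map $\rho_g\to\Sigma\rho_g$, and (ii) the $\tau_g$ commute with the structure maps of $\rho$ and of $\Sigma\rho$. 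Both reduce to elementary identities among permutation matrices in $Gl_{g+1}(\IZ)$ and $Gl_{g+2}(\IZ)$, and the centrality hypothesis enters exactly once, in step (ii).

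For (i): since $F_g\colon\rho_g\to I^*(\rho_{g+1})$ is already a $Gl_g(\IZ)$-map (it is the structure map of $\rho$), it suffices to see that $\mu(c_{g+1})\colon I^*(\rho_{g+1})\to J^*(\rho_{g+1})$ is $Gl_g(\IZ)$-equivariant. On underlying modules this is the identity of $\rho_{g+1}$, so the requirement $c_{g+1}\cdot(I(A)\cdot v)=J(A)\cdot(c_{g+1}\cdot v)$ for all $A\in Gl_g(\IZ)$, $v\in\rho_{g+1}$, is equivalent to the single group identity $c_{g+1}\,I(A)\,c_{g+1}^{-1}=J(A)$. I would establish this by a one-line permutation-matrix computation: with $I$ the upper inclusion (the new basis vector $e_{g+1}$ is fixed) and $J$ the lower inclusion ($e_1$ is fixed), conjugation by the cycle $e_i\mapsto e_{i+1}$, $e_{g+1}\mapsto e_1$ carries the coordinate block $\langle e_1,\dots,e_g\rangle$ onto $\langle e_2,\dots,e_{g+1}\rangle$ with all indices shifted by one, hence carries $I(A)$ to $J(A)$. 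Therefore $\tau_g$ is a $Gl_g(\IZ)$-map.

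For (ii): the structure map of $\Sigma\rho$ is $\Sigma F_g=J^*F_{g+1}$, so after unwinding the restrictions the compatibility condition asserts that the two maps $\rho_g\to I^*J^*(\rho_{g+2})$ given by $\Sigma F_g\circ\tau_g$ and by $I^*(\tau_{g+1})\circ F_g$ agree. On underlying sets these are $y\mapsto F_{g+1}(c_{g+1}\cdot F_g(y))$ and $y\mapsto c_{g+2}\cdot F_{g+1}(F_g(y))$. Using that $F_{g+1}$ is $Gl_{g+1}(\IZ)$-equivariant we rewrite $F_{g+1}(c_{g+1}\cdot x)=I(c_{g+1})\cdot F_{g+1}(x)$, so the first map becomes $y\mapsto I(c_{g+1})\cdot F_{g+1}(F_g(y))$, and the two coincide iff $c_{g+2}^{-1}\,I(c_{g+1})$ fixes every vector in the image of $F_{g+1}F_g\colon\rho_g\to\rho_{g+2}$. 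A second short permutation computation then identifies $c_{g+2}^{-1}\,I(c_{g+1})$ with $s_{g+2}$, the transposition of $e_{g+1}$ and $e_{g+2}$ in $Gl_{g+2}(\IZ)$, and triviality of the action of $s_{g+2}$ on $\operatorname{im}(F_{g+1}F_g)$ is precisely the definition of a central coefficient system. This finishes the verification.

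I do not expect a genuine obstacle here: the entire argument consists of two bookkeeping computations in finite symmetric groups together with the formal meaning of restriction along $I$ and $J$. The one subtlety, and the thing I would pin down first on a small $g$ with explicit matrices, is convention matching — which inclusion is $I$ and which is $J$, the direction of the cycle $c_g$, and left versus right translation — arranged so that the element emerging in step (ii) is exactly the transposition $s_{g+2}$ from the definition of \emph{central} (rather than, e.g., an elementary matrix $e_{g+1,g+2}$ of the kind appearing in the strongly central hypothesis). Once the conventions are fixed, both identities are forced and nothing further is needed.
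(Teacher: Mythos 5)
Your verification is correct: the paper itself gives no proof of this lemma (it is quoted from Dwyer), and your two computations --- $c_{g+1}I(A)c_{g+1}^{-1}=J(A)$ for equivariance, and $c_{g+2}^{-1}I(c_{g+1})=s_{g+2}$ so that centrality is exactly what makes $\tau$ commute with the structure maps --- are the standard argument and check out. You are also right that the $\mu(c_{g+2})$ in the statement must be read as $\mu(c_{g+1})$, since only an element of $Gl_{g+1}(\IZ)$ can act on $\rho_{g+1}$; this is a typo in the paper.
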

We say that a central coefficient system $\rho$ splits, if $\Sigma \rho$ is isomorphic to $\rho\oplus \text{coker}(\tau)$ via $\tau$. We then denote $\text{coker}(\tau)$ by $\Delta \rho$. We now define the notion of degree of a strongly central coefficient system $\rho$ inductively. We say it has degree $k<0$, if it is constant and for $k\geq 0$ we say that it has degree $\leq k$, if $\Sigma \rho$ splits and $\Delta \rho$ is a strongly central coefficient system of degree $k-1$.

\begin{thm}[{Van der Kallen \cite[p. 291]{vdk}}]\label{vdkstab}
 Let $\rho$ be a strongly central coefficient system of degree $\leq k$, then $$H_i(Gl_g(\IZ),\rho_g))\rightarrow H_i(Gl_{g+1}(\IZ),\rho_{g+1})$$ is an isomorphism for $g>2i+k+2$ and en epimorphism for $g\geq2i+k+2$.
\end{thm}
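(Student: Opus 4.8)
\medskip
\noindent\textbf{Proof strategy.} This is a theorem of van der Kallen, so I only indicate how I would approach it; the plan is to run the standard homological stability machine. The geometric input is a family of semi-simplicial sets $W_\bullet(\IZ^g)$ carrying a simplicial $Gl_g(\IZ)$-action — concretely the poset of \emph{unimodular sequences}, whose $p$-simplices are tuples $(v_0,\dots ,v_p)$ of elements of $\IZ^g$ spanning a free direct summand of rank $p+1$. Two properties are required. First, $|W_\bullet(\IZ^g)|$ must be highly connected, with connectivity of the form $\lfloor (g-2)/2\rfloor$, i.e.\ growing with slope $1/2$; this is what forces the slope $2$ in the final range. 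Second, the $Gl_g(\IZ)$-stabilizer of a $p$-simplex must be, up to an abelian unipotent subgroup, a conjugate of $Gl_{g-p-1}(\IZ)$, in such a way that under the structure maps $F_g$ the restriction of $\rho_g$ to this stabilizer is identified with the iterated shift $\Sigma^{p+1}\rho_{g-p-1}$ (after the splitting of $\Sigma$, with the summands of the right degree).

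Granting this, I would argue by a double induction on the pair $(i,k)$ ordered lexicographically, with $i$ the homological degree and $k$ the degree of the coefficient system. The base cases are $k<0$, which is the constant-coefficient case proved by the same machine without the coefficient induction, and $i=0$, handled directly on coinvariants. For the inductive step, the skeletal filtration of the highly connected space $|W_\bullet(\IZ^g)|$ yields a spectral sequence converging to $H_*(Gl_g(\IZ);\rho_g)$ in the stable range coming from the connectivity bound, whose columns are $H_*(Gl_{g-p-1}(\IZ);\Sigma^{p+1}\rho_{g-p-1})$. Comparing this spectral sequence for $g$ with the one for $g+1$: the shifted systems still have degree $\leq k$, so the inductive hypothesis in homological degrees $<i$ applies to all columns with $p\geq 0$, which reduces the claimed isomorphism in degree $i$ to the surjectivity statement in degree $i$ together with stability for the associated $\Delta$-system. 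The latter is then supplied by the split short exact sequence $0\to\rho\to\Sigma\rho\to\Delta\rho\to 0$ — available because "degree $\leq k$" includes that $\Sigma\rho$ splits — together with the fact that $\Delta\rho$ has degree $\leq k-1$, so the lexicographic induction applies to it in all degrees $\leq i$; feeding this into the long exact homology sequence upgrades surjectivity for $\rho$ to injectivity. The strong-centrality hypothesis is exactly what makes the identifications in the previous paragraph legitimate: centrality guarantees the relevant permutation element $s$ acts trivially, so the restricted system is an honest shift rather than a twisted one, and the extra condition involving $e_{g+1,g+2}$ is what lets one run the Hochschild--Serre spectral sequence for the extension $1\to U\to\operatorname{Stab}\to Gl_{g-p-1}(\IZ)\to 1$ and conclude that the unipotent part $U$ contributes nothing beyond the wanted $E^2$-term.

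The main obstacle is the first property: proving that $|W_\bullet(\IZ^g)|$ is highly connected with connectivity of slope at least $1/2$. This is the technical core; I would expect to prove it by an intricate induction on $g$ exploiting that $\IZ$ has finite Bass stable rank, constructing fillings of spheres by modifying unimodular sequences one vector at a time, and it is here that the stable range of $\IZ$ enters and produces the constant $2$ in $g>2i+k+2$. The second, more bookkeeping-type difficulty is to track the stability range precisely through the double induction — a shift of $1$ in $k$ at each descent in coefficient degree, a cost of $p$ homological degrees in the $p$-th column of the spectral sequence, all balanced against the linear connectivity bound — so that the output is the sharp range $g>2i+k+2$ rather than a weaker one. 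Neither step is conceptually surprising, but both require care.
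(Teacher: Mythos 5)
This statement is not proved in the paper at all: it is imported verbatim as van der Kallen's theorem (cited to \cite[p.\ 291]{vdk}) and used as a black box, so there is no internal argument to compare yours against. Your outline does faithfully reproduce the architecture of van der Kallen's actual proof --- the highly connected poset of unimodular sequences, the identification of simplex stabilizers with $Gl_{g-p-1}(\IZ)$ extended by an abelian unipotent group, the skeletal-filtration spectral sequence, and the lexicographic double induction on $(i,k)$ driven by the splitting $\Sigma\rho\cong\rho\oplus\Delta\rho$ with $\deg\Delta\rho\leq k-1$. As you acknowledge, it is a strategy rather than a proof: the connectivity theorem for unimodular sequences over $\IZ$ and the precise range bookkeeping are exactly the deferred technical core, and for the purposes of this paper one would simply cite the result, as the author does.
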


Denote by $\lambda_g$ the standard representation of $Gl_g(\IZ)$ on $\IZ^g$ and by $\bar \lambda_g$ the action by the inverse transposed on $\IZ^g.$ Let $\mathcal{A}$ be an abelian category. Given a functor $$T:\operatorname{Mod}(\IZ)\times\operatorname{Mod}(\IZ)\rightarrow \mathcal{A},$$ we define a coefficient system $\{T(\lambda_g,\bar \lambda_g)\}_{g\geq 1}$ with structure maps induced by the standard inclusions and actions induced by $\lambda_g$ and $\bar \lambda_g$.

\begin{lem}[{Compare \cite[5.5.]{vdk} and \cite[Lemma 3.1.]{dwyer}}]
If $$T:\operatorname{Mod}(\IZ)\times\operatorname{Mod}(\IZ)\rightarrow \mathcal{A}$$ is a polynomial functor of degree $\leq k$, then $\{ T(\lambda_g,\bar \lambda_{g})\}_{g\geq 1}$ is a strongly central coefficient system of degree $\leq k$. 
\end{lem}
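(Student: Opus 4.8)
The plan is to reduce the statement to a purely formal verification about how polynomial functors interact with the structure maps and stabilization elements appearing in the definition of a strongly central coefficient system. First I would check that $\{T(\lambda_g,\bar\lambda_g)\}_{g\geq 1}$ is actually a coefficient system: the structure maps $F_g$ are obtained by applying $T$ to the pair of standard inclusions $\IZ^g\hookrightarrow \IZ^{g+1}$ (for $\lambda$) and $\IZ^g\hookrightarrow\IZ^{g+1}$ (for $\bar\lambda$), and $Gl_g(\IZ)$-equivariance is immediate from functoriality together with the fact that the upper inclusion $I:Gl_g(\IZ)\hookrightarrow Gl_{g+1}(\IZ)$ is compatible with these inclusions of modules (and with the inverse-transpose action). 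Centrality and strong centrality then amount to checking that the elements $s_{g+2}$ and $e_{g+1,g+2}$ act trivially on the image of $F_{g+1}F_g$; since these elements fix pointwise the submodule $\IZ^g\subseteq\IZ^{g+2}$ spanned by the first $g$ basis vectors, and $F_{g+1}F_g$ factors through $T(\IZ^g\subseteq\IZ^{g+2},\, \IZ^g\subseteq\IZ^{g+2})$, functoriality gives that these elements act as the identity on the image. This is essentially the argument of \cite[Lemma 3.1]{dwyer}, transcribed.

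The heart of the matter is the degree count, which I would prove by induction on $k$, mirroring the inductive definition of degree. For $k<0$ a polynomial functor of negative degree is constant, hence the coefficient system is constant, hence of degree $k$ by definition. For the inductive step with $\deg T\leq k\geq 0$, I need two things: that $\Sigma\rho$ splits as $\rho\oplus\operatorname{coker}(\tau)$, and that $\Delta\rho=\operatorname{coker}(\tau)$ is (strongly central and) of degree $\leq k-1$. The splitting is where the cross-effect decomposition enters directly. The key observation is that the shifted system $\Sigma\rho_g=J^*(\rho_{g+1})=T(\lambda_{g+1},\bar\lambda_{g+1})$ restricted along the lower inclusion involves the module $\IZ^{g+1}=\IZ^g\oplus\IZ\cdot e_{g+1}$, and applying property (2) of cross-effects to this direct sum decomposition (in both variables simultaneously) gives
$$T(\IZ^g\oplus\IZ,\,\IZ^g\oplus\IZ)\cong T(0)\oplus\bigoplus_{\emptyset\neq S} T^{|S|}(\dots),$$
where the terms indexed by subsets $S$ of the four summands $\{\IZ^g\text{ (covariant)},\ \IZ\text{ (covariant)},\ \IZ^g\text{ (contravariant)},\ \IZ\text{ (contravariant)}\}$ organize into the copy of $\rho_g$ (the terms involving only the two $\IZ^g$-summands) and a complement. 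One then identifies the map $\tau$ with the inclusion of the $\rho_g$-summand — using the definition $\tau_g=\mu(c_{g+2})\circ F_g$ and checking that conjugating by $c_{g+2}$ carries the upper inclusion to the lower one up to the natural splitting — so that $\Delta\rho$ is precisely the complementary summand, namely the part of the cross-effect decomposition where the new basis vector $e_{g+1}$ genuinely appears in at least one (covariant or contravariant) slot. That complementary summand is, as a functor of $\IZ^g$, built from cross-effects $T^r$ with at least one argument "used up" by the constant $\IZ$, hence is a polynomial functor of degree $\leq k-1$ in the remaining variables; by induction the associated coefficient system has degree $\leq k-1$, and it is strongly central by the same functoriality argument as before. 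This completes the induction.

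The main obstacle I anticipate is bookkeeping rather than conceptual: one has a bifunctor $T(\lambda_g,\bar\lambda_g)$, so the relevant direct sum decomposition of $\IZ^{g+1}$ must be fed into the cross-effect formula in \emph{both} slots at once, producing a decomposition indexed by subsets of a four-element set, and one must carefully match the summand consisting of "no new basis vector in either slot" with the image of $\tau$ and argue that every other summand contributes to a strictly-lower-degree functor. Getting the equivariance right — that the $Gl_g(\IZ)$-action (via $\lambda_g$ on the covariant $\IZ^g$ and via $\bar\lambda_g=$ inverse-transpose on the contravariant $\IZ^g$) respects this splitting, and that $\tau$ is a map of coefficient systems as asserted by \cite[Lemma 2.1]{dwyer} — is the delicate point, but it follows formally once one notes that all the maps in sight are obtained by applying $T$ to $Gl_g(\IZ)$-equivariant maps of $\IZ$-module pairs. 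I would organize the write-up so that the cross-effect splitting is stated once as a lemma about bifunctors and then applied verbatim, keeping the degree induction clean.
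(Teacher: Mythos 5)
Your argument is correct and is essentially the standard one from the sources the paper itself invokes here without proof (van der Kallen 5.5 and Dwyer, Lemma 3.1): functoriality applied to the standard inclusions gives the (strongly central) coefficient system, and the cross-effect decomposition of $T(\IZ^g\oplus\IZ,\IZ^g\oplus\IZ)$ identifies $\operatorname{im}(\tau)$ with the summand not involving the new basis vector and $\Delta\rho$ with the complementary cross-effect terms, which drop the degree by one. The only point worth writing out carefully in a full version is the standard fact that freezing one argument of an $r$-th cross-effect of a degree $\leq k$ functor yields a functor of degree $\leq k-1$ in the remaining variables, which is the crux of the induction.
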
 

Denote now $G_g$ the automorphisms of $H(\IZ^g)$ in $Q^{\lambda}(\IZ,\Lambda).$ And denote by $e_1,...,e_g$ the standard basis for $\IZ^g$ and by $f_1,...,f_g$ the dual basis of $(\IZ^g)^*.$ We see $G_g$ as a subgroup of $Gl_{2g}(\IZ)$, by considering the elements of $G_g$ as $2g\times 2g$-matrices acting on $H(\IZ^g)\cong \IZ^{2g}.$ We define the upper inclusion $$I:G_g\rightarrow G_{g+1}\text{, }\begin{pmatrix}
A & B   \\
C & D \end{pmatrix}\mapsto \begin{pmatrix}
A & 0 & B & 0\\
0 & 1 & 0 & 1\\
C & 0 & D & 0\\
0 & 1 & 0 & 1\\
\end{pmatrix} $$ and similarly the lower inclusion $J:G_g\rightarrow G_{g+1}.$ The definition of a coefficient system is very similar to the one for $GL_g(\IZ)$ and we only briefly summarize it.
A coefficient system for $\{ G_g \}_{g \geq 1}$ is a sequence of $G_g$-modules $\{ \rho_g\}_{g \geq 1}$ together with $ G_g $-maps $F_g:\rho_g\rightarrow I^* (\rho_{g+1}).$ We denote a coefficient system again by $\rho$ and let maps of coefficient systems be as above. The shifted coefficient system $\Sigma \rho$ is the restriction via the lower inclusion as above. A coefficient system is called central if $c_{g+2}\circ c_{2g+4}$ acts trivially on the image of $F_{g+1}F_g:\rho_g\rightarrow \rho_{g+2}$. For a central coefficient system we define the map of coefficient systems $\tau:\rho\rightarrow\Sigma\rho$, by
$$\tau_g:\rho_g\xrightarrow{F_g} I^*(\rho_{g+1})\xrightarrow{\mu (c_{g+2}\circ c_{2g+4})} {J^*}(\rho_{g+1})=\Sigma \rho_g.$$ We call a central coefficient system $\rho$ split, if $\tau$ is injective and $\Sigma \rho \cong \tau(\rho)\oplus \operatorname{coker}(\tau).$ For a central coefficient system we define the degree inductively: We say it has degree $k<0$, if it is constant and for $k\geq 0$ we say that it has degree $\leq k$, if $\Sigma \rho$ splits and $\Delta  \rho=\operatorname{coker}(\tau)$ is a strongly central coefficient system of degree $\leq k-1$.

\begin{thm}[{Charney \cite[Theorem 4.3.]{MR885099}}]\label{charneystab}
 Let $\rho$ be a central coefficient system of degree $\leq k$, then $$H_i(G_g,\rho_g))\rightarrow H_i(G_{g+1},\rho_{g+1})$$ is an isomorphism for $g>2i+k+4$ and an epimorphism for $g\geq 2i+k+4$.
\end{thm}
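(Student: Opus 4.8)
This statement is quoted verbatim from \cite[Theorem 4.3]{MR885099}, so the plan is to recall the shape of Charney's argument, which follows the now-standard template for homological stability of families of automorphism groups (Maazen, Wagoner, van der Kallen, Dwyer). I would first introduce a highly connected complex acted on by $G_g=\Aut(H(\IZ^g))$: a semisimplicial set $\mathcal{IU}_g$ whose $p$-simplices are ordered $(p+1)$-tuples of vectors in $H(\IZ^g)$ spanning an isotropic direct summand that admits a hyperbolic complement, so that the stabilizer of a $p$-simplex is an extension of $G_{g-p-1}$ by a parabolic-type factor coming from the complement. The role of the axioms imposed on a coefficient system before the theorem (central, split, derived system $\Delta\rho$ of one lower degree) is precisely that the restriction of $\rho_g$ to such a stabilizer is assembled from $\rho_{g-p-1}$ and the split pieces in a controlled way, which is what lets the induction on $g$ interact with the simplices.

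The main obstacle — and the technical heart of \cite{MR885099} — is to show that $\|\mathcal{IU}_g\|$ is $\lfloor (g-c)/2\rfloor$-connected for an explicit constant $c$. This is proved by a ``surgery on unimodular sequences'' argument in the spirit of Quillen: given a map of a sphere into the complex one removes the bad vertices one at a time, using that $\IZ$ is a Dedekind domain of finite stable rank and that a hyperbolic module of large rank contains isotropic vectors in sufficiently general position to carry out the replacements. The precise connectivity estimate obtained here is exactly what produces the numerical range $g>2i+k+4$, the additive $+4$ (rather than the $+2$ of Theorem \ref{vdkstab}) reflecting the worse connectivity of the isotropic complex compared to the $GL$ case.

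Granting the connectivity, I would run the equivariant homology spectral sequence for the action of $G_g$ on $\|\mathcal{IU}_g\|$ with coefficients in $\rho_g$. Its $E^1$-page is built from $H_*$ of the simplex stabilizers with the restricted coefficients, hence essentially from terms of the form $H_*(G_{g-p-1};\rho_{g-p-1})$ twisted by the split factors, and it converges to $0$ in a range of total degrees because the complex is highly connected. A double induction then finishes the argument: an outer induction on the degree $k$ of the coefficient system, with base case $k<0$ the constant-coefficient stability theorem, using that $\Delta\rho$ is central of degree $\leq k-1$ and that splitting gives a decomposition $H_*(G_{g+1};\Sigma\rho_{g+1})\cong H_*(G_{g+1};\rho_{g+1})\oplus H_*(G_{g+1};\Delta\rho_{g+1})$; and an inner induction on the homological degree $i$, in which the spectral sequence upgrades the stability epimorphism in each bidegree to an isomorphism one step further.

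I expect the connectivity estimate for $\mathcal{IU}_g$ to be far and away the hardest ingredient; once it is in hand, the spectral-sequence bookkeeping and the manipulation of $\Sigma\rho$, $\Delta\rho$ and $\tau$ are routine given the coefficient-system formalism already set up in the section preceding the theorem.
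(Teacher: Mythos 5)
The paper gives no proof of this statement --- it is quoted verbatim from Charney --- and your outline is a faithful summary of Charney's actual argument: a highly connected complex of unimodular sequences in $H(\IZ^g)$, the equivariant homology spectral sequence whose $E^1$-page is built from the homology of simplex stabilizers, and a double induction on the degree of the coefficient system (using the splitting $\Sigma\rho\cong\rho\oplus\Delta\rho$) and on the homological degree. The only cosmetic discrepancy is that Charney's main complex consists of \emph{hyperbolic} unimodular sequences (ordered tuples of hyperbolic pairs rather than of isotropic vectors), with the isotropic/unimodular complexes entering as auxiliary objects in the connectivity proof; this does not affect the structure of the argument or the resulting range.
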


Again we get a central coefficient system of degree $\leq k$ by considering the standard $G_g$-action $\lambda_{g,g}$ on $H(\IZ^g)\cong \IZ^{2g}$, induced by the inclusion $G_g\subset Gl_{2g}(\IZ).$ Let $\mathcal{A}$ be an abelian category. Given a polynomial functor $$T:\operatorname{Mod}(\IZ)\rightarrow \mathcal{A},$$ of degree $\leq k$ then $\{T(\lambda_{g,g})\}_{g\geq 1}$ is a central coefficient system of degree $\leq k$ for $\{G_g \}_{g\geq 1}$.

Now we combine the homological stability results above to a result for the groups $\Gamma_I$ defined in the previous section.

Denote by $\lambda_I$ the \emph{standard representation of $\Gamma_I$} on $(\IZ^{g_1},...,\IZ^{2 g_{n/2}},...,\IZ^{g_{n-1}})$ induced by the inclusion $\Gamma_I\subset \prod_{i=1}^{\lfloor n/2\rfloor} Gl_{r_i}(\IZ),$ where we consider $\IZ^{2 g_{n/2}}$ to be the empty set if $n$ is odd and 
$$r_k = \begin{cases} 2g_k=2|\{i\in I|p_i=k\}| &\text{ if }k=n/2\\ g_k=|\{i\in I|p_i=k\}| &\text{ if }k <n/2.\end{cases}$$ More explicitly, we assume that an automorphisms $$A=(A_1,...,A_{n/2})\in \Gamma_I\subset \prod_{i=1}^{\lfloor n/2\rfloor} Gl_{r_i}(\IZ),$$ acts by matrix multiplication of $A_i$ on $\IZ^{g_i}$ for $i\leq n/2$ and by multiplication by the inverse transposed of $A_i$ on $\IZ^{g_{n-i}}$ for $i>n/2.$

Given a functor to an abelian category $T:Mod(\IZ)^{n-1}\rightarrow \mathcal{A}$, we get a $\Gamma_I$-module $T(\IZ^{g_1},...,\IZ^{2g_{n/2}},...,\IZ^{g_{n-1}})$ with the induced action. We denote this $\Gamma_I$-module by $T(\lambda_I).$ For a fixed $p\in\IN$, such that $0<p\leq n/2,$ denote by $\Gamma_{I'}$ the automorphisms of $H_{I'},$ where $I'=I\cup \{i'\}$ with $p_{i'}=p$ and $q_{i'}=n-p$. We define the \emph{stabilization map} $$\sigma_{p,n-p}:H_i(\Gamma_I,T(\lambda_I))\rightarrow H_i(\Gamma_{I'},T(\lambda_{I'}))$$ to be the map induced by the obvious upper inclusion $I_{p,q}:\Gamma_I\rightarrow\Gamma_{I'}$ and $T(I_{p,q}).$

\begin{prop}\label{general algebraic homological stability result}Let $\mathcal{A}$ be some abelian category, $T:Mod(\IZ)^{n-1}\rightarrow \mathcal{A}$ be a polynomial functor of degree $\leq k$. The stabilization map $$\sigma_{p,n-p}:H_i(\Gamma_I,T(\lambda_I))\rightarrow H_i(\Gamma_{I'},T(\lambda_{I'}))$$ induces an isomorphism for $g_p>2i+k+2$ when $2p\neq n$ and $g_p>2i+k+4$ if $2p=n$ and an epimorphism for $g_p\geq 2i+k+2$ respectively $g_p\geq2i+k+4.$
\end{prop}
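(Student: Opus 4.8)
The plan is to exploit the product decomposition of $\Gamma_I$ and then run a Lyndon--Hochschild--Serre spectral sequence to reduce the statement to Van der Kallen's Theorem~\ref{vdkstab} and Charney's Theorem~\ref{charneystab}. First I would fix $p$ and decompose $\Gamma_I\cong\Gamma'\times H$ according to the product descriptions of Section~\ref{automorphisms of hyperbolic modules}: $H$ is the factor whose rank parameter is $g_p$ — a copy of $Gl_{g_p}(\IZ)$ when $2p\neq n$, and the factor isomorphic to the automorphism group $G_{g_{n/2}}$ of a rank-$2g_{n/2}$ hyperbolic module when $2p=n$ — and $\Gamma'$ is the (fixed) product of all the remaining factors. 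Stabilizing with respect to $V^{p,q}$ changes only $g_p$, so $\Gamma_{I'}\cong\Gamma'\times H^+$ with $H^+$ the same kind of group of rank one larger, and $I_{p,q}=\id_{\Gamma'}\times\iota$ for $\iota$ the standard upper inclusion. The stabilization is therefore a map of split extensions $1\to H\to\Gamma_I\to\Gamma'\to 1$ and induces a map of Lyndon--Hochschild--Serre spectral sequences
$$E^2_{s,t}=H_s\big(\Gamma';H_t(H;T(\lambda_I))\big)\ \Longrightarrow\ H_{s+t}\big(\Gamma_I;T(\lambda_I)\big),$$
compatible with the coefficient map $T(I_{p,q})$, where $\Gamma'$ acts residually on $H_t(H;T(\lambda_I))$.

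The next step is to identify the coefficient systems sitting in the rows of $E^2$. Plugging the fixed modules $\IZ^{g_k}$ for $k\neq p,n-p$ into the corresponding variables of $T$ and carrying the $\Gamma'$-actions on those slots along, one obtains a functor $F$ into the abelian category $\mathcal{A}^{\Gamma'}$ of $\Gamma'$-objects of $\mathcal{A}$ — of two variables (the $p$-th and $(n-p)$-th slots) when $2p\neq n$, of one variable (the middle slot) when $2p=n$ — and restricting a polynomial functor of a product category to a subproduct does not raise the polynomial degree, so $F$ has degree $\leq k$. By the lemmas relating polynomial functors to coefficient systems, $\{F(\lambda_g,\bar\lambda_g)\}_g$ (resp. $\{F(\lambda_{g,g})\}_g$) is a strongly central (resp. central) coefficient system of degree $\leq k$ for $\{Gl_g(\IZ)\}_g$ (resp. for $\{G_g\}_g$), now with values in $\mathcal{A}^{\Gamma'}$, and $T(\lambda_I)$ is its value at $g=g_p$. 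Applying Theorem~\ref{vdkstab} (case $2p\neq n$) or Theorem~\ref{charneystab} (case $2p=n$) inside $\mathcal{A}^{\Gamma'}$ then gives that $H_t(H;T(\lambda_I))\to H_t(H^+;T(\lambda_{I'}))$ is a $\Gamma'$-equivariant isomorphism for $g_p>2t+k+c$ and an epimorphism for $g_p\geq 2t+k+c$, where $c=2$ if $2p\neq n$ and $c=4$ if $2p=n$. Since $H_s(\Gamma';-)$ preserves isomorphisms, the map induced by stabilization is an isomorphism on the whole $t$-th row of $E^2$ as soon as $g_p>2t+k+c$; since $H_0(\Gamma';-)$ is right exact, the induced map on $E^2_{0,t}=\big(H_t(H;T(\lambda_I))\big)_{\Gamma'}$ is an epimorphism as soon as $g_p\geq 2t+k+c$.

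It remains to feed this into the spectral sequence comparison theorem, organized by rows. For $s+t\leq i$, the term $E^\infty_{s,t}$ is a subquotient of $E^2_{s,t}$ built only from $E^2$-terms in rows $\leq i$ (the differentials out of such a cell land in rows $\leq i-1$ and those into it come from rows $<t$). Hence, if $g_p>2i+k+c$, the stabilization map is an isomorphism on $E^2_{s,t}$ for all $t\leq i$, therefore on $E^\infty_{s,t}$ for $s+t\leq i$, and therefore on $H_i(\Gamma_I;T(\lambda_I))$. For the epimorphism statement, assume $g_p\geq 2i+k+c$: then the map is an isomorphism on all rows $t\leq i-1$ and an epimorphism on $E^2_{0,i}$, so it is surjective on every associated-graded piece $E^\infty_{s,i-s}$ of $H_i$ (an isomorphism for $s\geq 1$, an epimorphism for $s=0$), hence surjective on $H_i$. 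Since $g_p$ is exactly the rank parameter of the stabilized factor $H$, this is the claimed range, the $2p=n$ case giving the larger bound because Charney's constant is $4$ rather than $2$.

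The step I expect to be the main obstacle is the range bookkeeping in the last paragraph: one must run the comparison theorem row by row rather than by total degree, so that the edge terms $E^2_{0,\ast}$ do not force the loss of a unit and the Van der Kallen / Charney constants pass through undamaged; this is also what forces the clean separation into the two cases $2p\neq n$ and $2p=n$. A secondary point needing care is the claim in the middle step that freezing variables and passing to $\Gamma'$-equivariant coefficients genuinely preserves the polynomial degree, which rests on the behaviour of the cross-effect functors under restriction to a subproduct.
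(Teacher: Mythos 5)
Your proof is correct and follows essentially the same route as the paper's: split off the stabilized factor $\Gamma_{g_p}$ of $\Gamma_I$, freeze the remaining variables of $T$ (the paper does this by precomposing with an additive functor $\mathfrak{I}_{p,n-p}$, which is the same degree count as your restriction-to-a-subproduct observation) to get a degree $\leq k$ coefficient system for $Gl_{g_p}(\IZ)$ resp.\ $G_{g_p}$, apply van der Kallen resp.\ Charney, and feed the result into the Lyndon spectral sequence for $\Gamma_I=\Gamma_{g_p}\times\Gamma_{\bar I}$. Your row-by-row bookkeeping in the comparison step is in fact more explicit than the paper's one-line "by comparing spectral sequences."
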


\begin{proof} Denote by $\Gamma_{g_p}$ the summand of $\Gamma_I\subset \prod_{i=1}^{\lfloor n/2\rfloor} Gl_{r_i}(\IZ)$ that sits in $Gl_{r_p}(\IZ)$. Let $\Gamma=\Aut(H_{\bar I}),$ where $\bar I= I\smallsetminus \{i\in I| p_i=p\}.$ Note that $\Gamma\oplus \Gamma_{g_p}= \Gamma_I$ and $\Gamma_{I'}=\Gamma_{g_p+1}\oplus \Gamma,$ where $\Gamma_{g_p+1}$ is defined analogous to $\Gamma_{g_p}.$
Consider the functor $$\mathfrak{I}_{p,n-p}:\begin{cases}
\operatorname{Mod}(\IZ)\rightarrow Mod(\IZ)^{n-1}\text{ if $2p=n$}\\
\operatorname{Mod}(\IZ)\times\operatorname{Mod}(\IZ) \rightarrow Mod(\IZ)^{n-1}	\text{ otherwise,}
\end{cases}$$ defined by sending a module $M$ to $(\IZ^{g_1},...,M,...,\IZ^{g_{n-1}})$, where the $M$ sits at the $(n/2)$-th summand and a pair $(M,N)$ to $(\IZ^{g_1},...,M,...,N,...,\IZ^{g_{n-1}})$, where the $M$ sits at the $p$-th summand the $M$ sits at the $(n-p)$-th summand. This functor is clearly additive and hence of degree $\leq 1.$ This implies that the composition $T\circ \mathfrak{I}_{p,n-p}$ is of degree $\leq k$ and we get a (strongly) central coefficient system of degree $\leq k$ for $$\Gamma_{g_p}=\begin{cases} G_{g_p} & \text{ if $2 p=n$}  \\
	Gl_{g_p}(\IZ) & \text{otherwise.} 
\end{cases}$$
This implies that the stabilization maps
$$H_i(\Gamma_{g_p},T\circ \mathfrak{I}_{n/2,n/2}(\lambda_{g_p,g_p}))\rightarrow H_i(\Gamma_{g_{p}+1},T\circ \mathfrak{I}_{n/2,n/2}(\lambda_{g_p+1,g_p+1}))$$ respectively
$$H_i(\Gamma_{g_p},T\circ \mathfrak{I}_{p,n-p}(\lambda_{g_p},\bar \lambda_{g_p}))\rightarrow H_i(\Gamma_{g_{p}+1},T\circ \mathfrak{I}_{p,n-p}(\lambda_{g_p+1},\bar \lambda_{g_p+1}))$$ are isomorphism and epimorphism in the ranges in the statement of the Proposition. Observing that the $T\circ \mathfrak{I}_{n/2,n/2}(\lambda_{g_p,g_p})$ respectively $T\circ \mathfrak{I}_{p,n-p}(\lambda_{g_p},\bar \lambda_{g_p})$ are precisely the restrictions of the $\Gamma_I$-representation to the subgroup $\Gamma_{g_p}$ and using Lyndon spectral sequence $$H_k(\Gamma_{\bar I},H_l(\Gamma_{g_p},T(\lambda_I))) \Rightarrow H_i(\Gamma_I,T(\lambda_I)),$$ the results follows by comparing spectral sequences.
\end{proof}

\section{On Mapping Class Groups}\label{on mapping class groups}
Denote by $$N=N_I=\left(\#_{i\in I} (S^{p_i}\times S^{q_i})\right)\smallsetminus \interior (D^n),$$ where $|I|<\infty,$ $3\leq p_i\leq q_i<2p_i-1$ and $p_i+q_i=n$ for $i\in I$.

Denote by $\Aut(\tilde{H}_*(N_I))$ the automorphisms of the graded group $\tilde{H}_*(N_I).$
In this section we study the map $$H_*:\pi_0 \; \aut_\partial (N_I)\rightarrow \Aut(\tilde{H}_*(N_I)).$$ In particular we are going to determine its image and show that the kernel is finite.

Denote by $in:\partial N\hookrightarrow N$ the inclusion of the boundary. We observe that $V_I=\bigvee_{i\in I} (S^{p_i}\vee S^{q_i})\subset N$ is a deformation retract and denote by $$\alpha_j:S^{p_j} \hookrightarrow \bigvee_{i\in I} (S^{p_i}\vee S^{q_i}) \text{ and } \beta_j:S^{q_j} \hookrightarrow \bigvee_{i\in I} (S^{p_i}\vee S^{q_i})$$ the canonical inclusions. We consider $in$ as an element of $\pi_{n-1}(\bigvee_{i\in I} (S^{p_i}\vee S^{q_i}))$ and we observe that it is given by $\sum_{i\in I} [\alpha_i,\beta_i]$. Denote by $$\begin{aligned}\langle-,-\rangle:&H_*(N)\otimes H_{n-*}(N) \rightarrow \IZ\\ &x\otimes y \mapsto (PD^{-1}(x)\cup PD^{-1}(y))([N,\partial N])\end{aligned}$$ the intersection form, where $PD^{-1}:H_*(N)\rightarrow H^{n-*}(N,\partial N)$ denotes the Poincar\'e duality isomorphisms and we evaluate on the fundamental class $[N,\partial N].$ The $\{ \alpha_i \}$ and $\{ \beta_i \}$ define a basis for $\tilde H_*(N)$ via the Hurewicz homomorphism, which we denote by $\{ a_i\}$ respectively $\{b_i\}$. Note that $b_i$ is dual to $a_i.$

In the case $n=2d$ is even we need to recall a further piece of structure from Wall's classification of highly connected even-dimensional manifolds \cite{MR0145540}. The elements $x\in H_{d}(N)$ can be represented by embedded $S^{d}.$ Denote by $\nu_x\in \pi_{d-1}(SO(d))$ the clutching function of the normal bundle of this embedding. It is independent of the choice of embedding, since homotopic embeddings are isotopic in this case. This defines a function $$q:H_{d}(N)\rightarrow \pi_{d-1}(SO(d))\text{, }x\mapsto [\nu_x] .$$ Denote by $\iota_{d}$ the class of the identity in $\pi_{d}(S^{d})$ and by $$\partial:\pi_d(S^d)\rightarrow \pi_{d-1}(SO(d))$$ the boundary map in the fibration $SO(d)\rightarrow SO(d+1)\rightarrow S^d.$ The function $q$ satisfies:
$$\langle x,x \rangle=HJq(x) \text{ and } q(x+y)=q(x)+q(y)+\langle x,y \rangle\partial \iota_d,$$ where $\pi_{d-1}(SO(d))\xrightarrow{J} \pi_{2d-1}(S^d)\xrightarrow{H}\IZ$ denote the $J$-homomorphism and the Hopf invariant. There is also a purely homotopy theoretic description of $Jq$ in \cite[Section 8]{MR0148075}. 

Note that for $a_i,b_j\in H_{d}(N),$ we have $q(a_i)=q(b_j)=0.$ Hence $\operatorname{Image}(q)$ is contained in the subgroup $\langle\partial\iota_d\rangle$ generated by $\partial\iota_d.$ The $J$-homomorphism restricts to an isomorphism $$J|\langle\partial\iota_d\rangle:\langle\partial\iota_d\rangle\rightarrow J(\langle\partial\iota_d\rangle)\cong \begin{cases} \langle [\iota_d,\iota_d]\rangle \xrightarrow[H]{\cong} 2\IZ &\text{if $d$ even}\\
0 &\text{if $d=1,3,7$ }\\
\langle [\iota_d,\iota_d]\rangle\cong \IZ/2\IZ &\text{if $d$ is odd and not $1,3$ or $7$,}
\end{cases}$$ where the second isomorphism is induced by the Hopf invariant.
Let $$\Aut(\tilde H_*(N),\langle-,-\rangle, Jq)\text{ and }\Aut(\tilde H_*(N),\langle-,-\rangle, q)$$ be the automorphisms of the reduced homology respecting the intersection form and the function $Jq$ ($q$ respectively). Note that $$\langle x, y \rangle=\mu(x,y)+(-1)^{|x||y|}\mu(y,x),$$ where $\mu(-,-)$ is determined by $$\mu(a_i,b_j)=\delta_{i,j} \text{ and } \mu(b_i,a_j)=\mu(a_i,a_j)=\mu(b_i,b_j)= 0.$$ Now let $$\Lambda=\begin{cases}0 & \text{ if $n=2d$ and $d$ is even}\\
\IZ & \text{ if $n=2d$ and $d$ is $3$ or $7$}\\
2\IZ &\text{ if $n=2d$ and $d$ is odd and not $3$ or $7$.} 

\end{cases}$$ 

Moreover $Jq=q_\mu,$ where $q_\mu$ is the $\Lambda$-quadratic form associated to $\mu,$ where we identify $\langle [\iota_d,\iota_d]\rangle$ with $\IZ$ and $\IZ/2\IZ$ respectively. It suffices to check this for the elements $a_i+b_i$ and for these $Jq(a_i+b_i)=[\iota_d,\iota_d]$ and $q_\mu(a_i+b_i)=1.$ By the discussion above we see that 
$$\Aut(\tilde H_*(N),\langle-,-\rangle, q)\cong \Aut(\tilde H_*(N),\langle-,-\rangle, Jq)\cong\Gamma_I= \Aut(H_I) \text{ in } Q_*^n(\IZ,\Lambda).$$

For a representative $f$ of $[f]\in \pi_0 (\aut_\partial (N))$ it is clear that $\tilde H_*(f)\in \Gamma_I$. We are now going to show that all elements of $\Gamma_I$ can be realized by a homotopy self-equivalence, fixing the boundary pointwise.

\begin{prop}\label{hmcg}
The group homomorphism
\begin{equation*}
\pi_0 (\aut_\partial (N))\xrightarrow{\tilde H_*} \Aut (\tilde H_* (N),\langle-,-\rangle, Jq )
\end{equation*}
is surjective and has finite kernel.
\end{prop}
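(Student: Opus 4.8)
The plan is to handle surjectivity and finiteness of the kernel by two essentially independent arguments, both exploiting that $N = N_I$ deformation retracts onto the wedge $V_I = \bigvee_{i\in I}(S^{p_i}\vee S^{q_i})$ and that the boundary inclusion is classified by $\iota = \sum_{i\in I}[\alpha_i,\beta_i]\in \pi_{n-1}(V_I)$. A pointed homotopy self-equivalence of $N$ fixing the boundary is, up to homotopy, the same as a pointed homotopy self-equivalence $\varphi$ of $V_I$ together with a homotopy from $\varphi\circ\iota$ to $\iota$; equivalently, $\aut_\partial(N)$ is homotopy equivalent to the homotopy fiber of the map $\aut_*(V_I)\to \pi_{n-1}(V_I)$, $\varphi\mapsto \varphi_*(\iota)$, over the point $\iota$. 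So $\pi_0(\aut_\partial(N))$ fits into an exact sequence involving $\pi_0(\aut_*(V_I))$, the stabilizer of $\iota$ under this action, and $\pi_1$-data. First I would make this precise (it is the same bookkeeping as in Berglund--Madsen \cite{berg12,berg13} for the equidimensional case, and in Baues' work on homotopy equivalences of complexes).

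For surjectivity: given $\Phi\in\Gamma_I=\Aut(\tilde H_*(N),\langle-,-\rangle,Jq)$, I would first lift it to a pointed homotopy self-equivalence $\varphi$ of $V_I$. Because each wedge summand is a sphere of dimension $p_i$ or $q_i$ with $3\le p_i\le q_i$, the group $\pi_0(\aut_*(V_I))$ surjects onto $\Aut(\tilde H_*(V_I))$ (one builds a self-map of the wedge realizing any prescribed matrix on $\bigoplus\tilde H_*(S^{p_i})\oplus\bigoplus \tilde H_*(S^{q_i})$ using Whitehead products and the James--Hilton decomposition of $\pi_*$ of a wedge of spheres; the connectivity and the dimension bound $q_i<2p_i-1$ keep all the relevant obstruction/indeterminacy groups under control). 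Then I must check that such a lift $\varphi$ can be chosen so that $\varphi_*(\iota)=\iota$ in $\pi_{n-1}(V_I)$. Here is where the two conditions in the definition of $\Gamma_I$ enter: $\varphi_*(\iota)$ and $\iota$ have the same image under the Hurewicz map composed with the relevant quadratic data precisely because $\Phi$ preserves $\langle-,-\rangle$ and $Jq$ (for $n=2d$ one uses the formula $\langle x,x\rangle = HJq(x)$ and $q(x+y)=q(x)+q(y)+\langle x,y\rangle\partial\iota_d$ to identify $\iota$ with the quadratic refinement), and the difference $\varphi_*(\iota)-\iota$ then lies in a subgroup on which one has enough freedom — by modifying $\varphi$ by a self-map of $V_I$ inducing the identity on homology (i.e. an element of the kernel, built from higher Whitehead products) — to kill it. This modification step, reconciling the chosen homology-level lift with the basepoint/boundary constraint, is the technical heart of surjectivity.

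For finiteness of the kernel: an element of $\ker(\tilde H_*)$ is represented by a pointed self-equivalence $\varphi$ of $V_I$ inducing the identity on $\tilde H_*$ (hence, by Whitehead and the connectivity, homotopic to the identity after one forgets basepoints), together with path data over $\iota$. Rationally, the group of such homotopy classes is controlled by rational homotopy groups of $V_I$ in a range; the dimension restriction $q_i<2p_i-1$ forces these to vanish rationally in the relevant degrees, so the kernel is a torsion, finitely generated, hence finite group. Concretely I would filter $\aut_*(V_I)$ by the Postnikov/cellular filtration, identify the successive quotients of $\pi_0$ of the identity component with subquotients of $[V_I, V_I]$-type groups that are finitely generated abelian and rationally trivial in the pertinent range, and add the contribution from $\pi_1$ of the homotopy fiber (a quotient of $\pi_{n}(V_I)$-type data), which is again finitely generated and rationally trivial by the same dimension count; a finite iterated extension of finite groups is finite. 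I expect the main obstacle to be the surjectivity modification step — showing that every homology-realizing self-map of the wedge can be corrected to fix $\iota$ on the nose — since it requires a careful analysis of the indeterminacy of lifts through the Hurewicz map in the presence of spheres of two different dimensions, which is exactly the new difficulty compared to the equidimensional setting of Berglund--Madsen; the bound $q_i<2p_i-1$ is what makes this indeterminacy manageable.
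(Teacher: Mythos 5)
Your skeleton (the fibration $\aut_\partial(N)\to\aut_*(N)\to\map_*(\partial N,N)$, realizing automorphisms on the wedge $V_I$ and then testing compatibility with $in=\sum_i[\alpha_i,\beta_i]$) is the same as the paper's, but both decisive steps contain genuine gaps. For surjectivity, the ``modification step'' you identify as the technical heart does not work as described, and it is also unnecessary. The pointed self-maps of $V_I$ inducing the identity on homology form a \emph{finite} set: after the identification $[V_I,V_I]_*\cong\prod[S^a,S^b]_*$ (valid because $V_I\hookrightarrow\prod_i(S^{p_i}\times S^{q_i})$ is $(2\min p_i-1)$-connected, which exceeds $\dim V_I$ by the hypothesis $q_i<2p_i-1$), all cross-term factors are finite homotopy groups of spheres. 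So there is essentially no freedom to ``kill'' $\varphi_*(\iota)-\iota$ by composing with such maps, and preservation of $\langle-,-\rangle$ and $Jq$ does not by itself constrain $\varphi_*(\iota)$, since the Hurewicz image of $\iota$ vanishes. What the paper does instead is take the explicit multiplicative section $(M^l)\mapsto f_{(M^l)}$ given by integer combinations of the inclusions and compute $f_{(M^l)}\circ in$ directly in the Hilton--Milnor decomposition of $\pi_{n-1}(V_I)$, using bilinearity of Whitehead products and distributivity of composition with suspensions; the computation shows $f_{(M^l)}\circ in\simeq in$ exactly when $(M^l)\in\Gamma_I$ (the form parameter $\Lambda$ arising from the need to kill the terms $[\alpha_i,\alpha_i]$ and $[\beta_i,\beta_i]$ in middle dimension). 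The preferred lift already fixes $\iota$ on the nose; this computation is what your argument is missing.

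For finiteness of the kernel, your claim that the relevant homotopy groups of $V_I$ ``vanish rationally in the pertinent range'' is false. The group $\pi_1(\map_*(\partial N,N),in)\cong\pi_n(N)$ controls $\ker(\pi_0(\aut_\partial(N))\to\pi_0(\aut_*(N)))$, and $\pi_n(V_I)\otimes\IQ$ is the degree-$(n-1)$ piece of a free graded Lie algebra on generators of degrees $p_i-1,q_i-1$, which is nonzero in general: already for $N=(S^4\times S^5)\smallsetminus\interior(D^9)$ the class $[s^{-1}b,s^{-1}b]$ is a nonzero rational element in degree $8=n-1$. So the kernel cannot be bounded by a vanishing argument. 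What is true, and what the paper proves (Remark \ref{ratsur}, via the surjectivity of the evaluation map $\der(\IL_I)\to s^{n-2}\IL_I$ in the dg Lie model of Section \ref{On the rational homotopy type of homotopy automorphisms}), is that $\pi_1(\aut_*(N))\otimes\IQ\to\pi_1(\map_*(\partial N,N),in)\otimes\IQ$ is \emph{surjective}, which is what forces the image of $\pi_1(\map_*(\partial N,N))$ in $\pi_0(\aut_\partial(N))$ to be finite. Your proof needs this surjectivity statement (a nontrivial input from rational homotopy theory) in place of the vanishing claim.
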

\begin{proof}Compare \cite[Proof of Theorem 2.10]{berg12}. The cofibration $in:\partial N\hookrightarrow N$ induces a fibration 
$$\map_\partial (N,N)\rightarrow\map_*(N,N) \rightarrow \map_*(\partial N, N),$$ where $\map_\partial (N,N)$ is the fiber over $in$. Let $\map_\partial (N,N)$ and $\map_*(N,N)$ be based at the identity. Restricting the total space to invertible elements, we also get the following fibration: $$\aut_\partial (N)\rightarrow\aut_*(N) \rightarrow \map_*(\partial N, N).$$

We are going to analyze the long exact homotopy sequences $$\xymatrix{
...\ar[r]& \pi_1 (map_*(\partial N, N),in)\ar[r]\ar@{=}[d]& \pi_0( \aut_\partial(N) )\ar[r]\ar@{^{(}->}[d]& \pi_0 (\aut_*(N))\ar[r]\ar@{^{(}->}[d]& [\partial N,N]_*\ar@{=}[d]\\
...\ar[r]& \pi_1 (map_*(\partial N, N),in)\ar[r]& [N,N]_\partial \ar[r]& [N,N]_*\ar[r]& [\partial N,N]_* .}$$ We consider the monoid homomorphism $$\tilde H_*:[N,N]_*\rightarrow \operatorname{End}(\tilde H_*(N))$$ and show that it is onto and with finite kernel. Using the relative Hurewicz isomorphism, it is easy to see that $V_I\hookrightarrow \prod_{i\in I} (S^{p_i}\times S^{q_i})$ is $(2\operatorname{min}_{i\in I}\{ p_i \}-1)$-connected and hence more than $\operatorname{max}_{i\in I}\{ q_i \}$-connected. Thus we get an isomorphism of sets \begin{align*}&[N,N]_* \cong [V_I,V_I]_* \cong [V_I,\prod_{i\in I} (S^{p_i}\times S^{q_i})]_* \\
&\cong \prod_{(i,j)\in I\times I} [S^{p_i},S^{p_j}]_* \times \prod_{(i,j)\in I\times I} [S^{q_i},S^{q_j}]_* \times \prod_{(i,j)\in I\times I} [S^{q_i},S^{p_j}]_* \times \prod_{(i,j)\in I\times I} [S^{p_i},S^{q_j}]_* .
\end{align*} We write $I=\bigcup_l I_l,$ where $I_l=\{i|p_i=l\}.$
The only non-finite factors of the product above are \begin{equation}\label{nonfinitefactors}
	\prod_{l}\prod_{(i,j)\in I_l\times I_l} \left([S^{p_i},S^{p_j}]_*\times[S^{q_i},S^{q_j}]_*\right).
\end{equation} We identify $\operatorname{End}(\tilde H_*(N))\cong \prod \operatorname{Mat}_{r_l}(\IZ) ,$ where $r_l=\operatorname{rank}(H_l(N)),$ using the basis $\{a_i\}\cup\{b_i\}$. Note that for $l=n/2$ a $b_i$ becomes a $(r_l/2+i)$-th basis element. Denote by $\alpha^l_1,...,\alpha^l_{r_l}$ and $\beta^l_1,...,\beta^l_{r_l}$ the inclusions $S^l\hookrightarrow V_I$ and $S^{n-l}\hookrightarrow V_I$ respectively. There is a multiplicative section of $\tilde H_*$ \begin{equation}\label{section} \prod \operatorname{Mat}_{r_i}(\IZ)\rightarrow [N,N]_*,\;
	(M^l)=(m^l_{i,j})\mapsto f_{(M^l)}= \bigvee_{l=1}^{\lfloor n/2\rfloor} f_{M^l},
\end{equation} where $f_{M^l}:\bigvee_{i\in I_l} S^{p_i}\vee S^{q_i}\rightarrow V_I\text{, }$ is given by $$f_{M^l}= \begin{cases}
\bigvee_{i=1}^{r_l} (\sum^{r_l}_{j=1} m^l_{i,j}\alpha^l_j \vee \sum^{r_l}_{j=1} m^{n-l}_{i,j}\beta^l_j ) &\text{ if } l\neq n/2\\ \\
\bigvee_{i=1}^{r_l/2} (\sum^{r_l/2}_{j=1} m^l_{i,j}\alpha^l_j+\sum^{r_l}_{j=r_l/2+1} m^l_{i,j}\beta^l_{(j-r_l/2)})\\ \vee \bigvee_{i=r_l/2+1}^{r_l} (\sum^{r_l/2}_{j=1} m^l_{i,j}\alpha^l_j+\sum^{r_l}_{j=r_l/2+1} m^l_{i,j}\beta^l_{(j-r_l/2)}) &\text{ if } l=n/2.
\end{cases}$$ We observe that the image of this section is precisely the sub-monoid of $[N,N]_*$ corresponding to the non-finite factors (\ref{nonfinitefactors}). Hence we get that $\tilde H_*:[N,N]_*\rightarrow \Aut(\tilde H_*(N))$ is surjective and has finite kernel.
Restricting to the submonoids of invertible elements this implies upon using the section (\ref{section}) that $\pi_0(\aut_*(N))\rightarrow \operatorname{Aut}(\tilde H_*(N))$ is surjective with finite kernel. The image of $\pi_0(\aut_\partial(N))\rightarrow \pi_0(\aut_*(N))$ consists of the elements $[f]\in \pi_0(\aut_*(N)),$ such that $f\circ in\simeq in$ (we assume all homotopy equivalences in this proof to be pointed). Since (\ref{section}) restricts to a section $\operatorname{Aut}(\tilde H_*(N))\rightarrow \pi_0(\aut_*(N))$ we get that the image of $$\tilde H_*:\pi_0(\aut_\partial(N))\rightarrow \operatorname{Aut}(\tilde H_*(N))$$ is given by the $(M^l)$ such that $f_{(M^l)}\circ in\simeq in.$ Using the Hilton-Milnor Theorem we identify \begin{equation}\label{hiltonmilnoridentification}
 	\pi_{n-1}(N)\cong \pi \oplus \bigoplus_{l} \pi_{n-1}(\bigvee_{i\in I_l}(S^{p_i}\vee S^{q_l})),
\end{equation}where $\pi$ is some subgroup of $\pi_{n-1}(N).$ We observe that $$ f_{(M^l)}\circ in \simeq \sum_{i\in I} [f_{(M^l)}\circ\alpha_i,f_{(M^{l})}\circ\beta_i]\simeq \sum_{l=1}^{\lfloor n/2\rfloor}\sum_{i\in I_l} [f_{M^l}\circ\alpha^l_i,f_{M^l}\circ\beta^l_i],$$ i.e. that the action of $f_{(M^l)}$ respects the summands of the identification (\ref{hiltonmilnoridentification}). Thus it suffices to check that $f_{M^l}\circ \sum_{i\in I_l} [\alpha_i^l,\beta_i^l]\simeq \sum_{i\in I_l}[\alpha_i^l,\beta_i^l]$ for all $l.$ We use that left homotopy composition is distributive for suspensions \cite[p. 126]{MR0041435}, i.e. $(x+y)\circ \Sigma z\simeq x\circ \Sigma z+ y\circ \Sigma z $. For $l\neq n/2$ we calculate \begin{align*} f_{M^l}\circ \sum_{i=1}^{r_l} [\alpha_i^l,\beta_i^l] \simeq & \sum_{i=1}^{r_l} [f_{M^l}\circ\alpha^l_i,f_{M^l}\circ\beta^l_i] \simeq   \sum_{i,j,k} [m^l_{i,j}\alpha^l_j,m^{n-l}_{i,k}\beta^l_k] \\
  \simeq &    \sum_{i,j,k} m^l_{i,j}m^{n-l}_{i,k} [\alpha^l_j,\beta^l_k] \simeq    \sum_{j,k} ((M^l)^TM^{n-l})_{j,k} [\alpha^l_j,\beta^l_k].
 \end{align*} This expression is homotopic to $\sum_{i=1}^{r_l} [\alpha_i^l,\beta_i^l] $ if \begin{equation}\label{mcgconsition1}
(M^l)^TM^{n-l}=\id_{Mat_{r_l}{(\IZ)}}.	
\end{equation} For $l= n/2$ we write $$M^l=\begin{pmatrix}
A^l & B^l   \\
C^l & D^l 
\end{pmatrix}.$$ We calculate:\begin{align*} &f_{M^l}  \circ \sum_{i=1}^{r_l/2} [\alpha_i^l,\beta_i^l] \simeq  \sum_{i=1}^{r_l/2} [f_{M^l}\circ\alpha^l_i,f_{M^l}\circ\beta^l_i] \\ 
 \simeq & \sum_{i=1}^{r_l/2}\left[\sum^{r_l/2}_{j=1} (a^l_{i,j}\alpha^l_j + b^l_{i,j}\beta^l_{j}) ,\sum^{r_l/2}_{k=1} (c^l_{i,k}\alpha^l_k+ d^l_{i,k}\beta^l_{k})\right]\\
 \simeq& \sum_{i=1}^{r_l/2}\sum^{r_l/2}_{j=1}\sum^{r_l/2}_{k=1} a^l_{i,j} d^l_{i,k} [\alpha^l_j,\beta^l_{k}]            +\sum_{i=1}^{r_l/2}\sum^{r_l/2}_{j=1}\sum^{r_l/2}_{k=1} b^l_{i,j}c^l_{i,k} [\beta^l_{j},\alpha^l_k] \\
 +& \sum_{i=1}^{r_l/2}\sum^{r_l/2}_{j=1}\sum^{r_l/2}_{k=1}  a^l_{i,j}c^l_{i,k} [\alpha^l_j ,\alpha^l_k]            +\sum_{i=1}^{r_l/2}\sum^{r_l/2}_{j=1}\sum^{r_l/2}_{k=1}  b^l_{i,j}d^l_{i,k}[\beta^l_{j},\beta^l_{k}]\\
 \simeq& \sum^{r_l/2}_{j=1}\sum^{r_l/2}_{k=1} ( (A^l)^TD^l +(-1)^{n/2}((C^l)^TB^l ))_{j,k} [\alpha^l_j,\beta^l_{k}] \\
 +& \sum^{r_l/2}_{j=1}\sum^{r_l/2}_{k=1}  ((A^l)^TC^l)_{j,k} [\alpha^l_j ,\alpha^l_k]            +\sum^{r_l/2}_{j=1}\sum^{r_l/2}_{k=1}  ((B^l)^T D^l)_{j,k}[\beta^l_{j},\beta^l_{k}]
 \end{align*} This expression is homotopic to $\sum_{i=1}^{r_l/2} [\alpha_i^l,\beta_i^l] $, if
$$\begin{aligned}
	& (A^l)^TD^l +(-1)^{n/2}(C^l)^TB^l=1  \\
	& (A^l)^TC^l+(-1)^{n/2} (C^l)^TA^l=0\\
	& (B^l)^TD +(-1)^{n/2}(D^l)^T B^l =0\\
	& (A^l)^TC^l \text{ and } (B^l)^TD^l \text{ have diagonal entries in $\Lambda,$}
\end{aligned}$$
where $$\Lambda=\begin{cases} 2\IZ &\text{ if $n=2d$ and $d$ is odd and not $3$ or $7$} \\
\IZ & \text{ if $n=2d$ and $d$ is $3$ or $7$}\\
0 &\text{ if $n=2d$ and $d$ is even.}\end{cases}$$ Where the diagonal entries of $(A^l)^TC^l$ and $(B^l)^TD^l$ have to be in $\Lambda$ to kill the elements $[\alpha^l_i ,\alpha^l_i]$ and $[\beta^l_i ,\beta^l_i].$ These are exactly the conditions to be an automorphisms of $H(\IZ^{g_{n/2}})$ in $Q^{(-1)^{n/2}}(\IZ,\Lambda).$
Combining this with the condition in (\ref{mcgconsition1}) we see that the image of $\tilde H_*$ in $\Aut(\tilde H_*(N))$ is given by $$\Gamma_I \subset \prod^{\lfloor n/2\rfloor}_{k=1} Gl_{r_k}(\IZ).$$ Thus we proved that $$\tilde H_*:\pi_0(\aut_\partial (N)) \rightarrow \Aut(\tilde H_*(N),\langle-,-\rangle,Jq)$$ is surjective.
To show that the kernel is finite it suffices to check that $\pi_0(\aut_\partial (N))\rightarrow \pi_0(\aut_* (N))$ has finite kernel. This follows from the fact that $$\pi_1(in^*):\pi_1(\aut_*(N),\operatorname{id}_N)\otimes \IQ\rightarrow \pi_1(\map_*(\partial N, N),in)\otimes \IQ$$ is surjective as we will see in Remark $\ref{ratsur}.$

\end{proof}

In fact it suffices to know $\Gamma_I$ for our purposes, since the action of elements of the kernel of $\tilde H_*$ by conjugation is trivial up to homotopy.

\begin{lem}\label{gamma I action on universal cover}
Let $f$ represent an element of the kernel of $$\tilde H_*:\pi_0( \aut_\partial(N_I))\to \Gamma_I,$$ then $f^{-1}\circ g\circ f\simeq g$ for all $g\in \aut_\partial(N_I).$
\end{lem}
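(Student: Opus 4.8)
The plan is to show that conjugation by $f$ acts trivially on the \emph{homotopy set} $\pi_0(\aut_\partial(N_I))$ up to homotopy, or rather directly to produce a homotopy $f^{-1}\circ g\circ f\simeq g$ for every $g$. The key point is that $N_I$ deformation retracts onto the wedge $V_I=\bigvee_{i\in I}(S^{p_i}\vee S^{q_i})$, whose homotopy is, in the relevant range, governed by its cells; so a pointed self-map of $N_I$ is determined up to homotopy by more than just $\tilde H_*$, but the discrepancy is controlled by Whitehead products and higher-order data which become insensitive to conjugation by something inducing the identity on homology. Concretely, I would work with the pointed version: it suffices to show $f^{-1}\circ g\circ f\simeq g$ in $[N,N]_*$ and then check the homotopy can be taken rel $\partial N$; the latter follows because both sides already fix the boundary and the obstruction to deforming a free homotopy into one rel $\partial N$ lies in homotopy groups of $\map_*(\partial N,N)$ which, after the identifications made in the proof of Proposition~\ref{hmcg}, vanish rationally (Remark~\ref{ratsur}) — and the kernel of $\tilde H_*$ is finite anyway, so one can argue integrally by a transfer/finiteness argument or more directly.

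The cleanest route: First I would reduce to the wedge $V_I$ and use that $f$, inducing the identity on $\tilde H_*(V_I)$, is a map whose restriction to each sphere $S^{p_i}$ (resp.\ $S^{q_i}$) is homotopic to the inclusion $\alpha_i$ (resp.\ $\beta_i$) \emph{plus} a correction term landing in the summand of $\pi_*(V_I)$ spanned by Whitehead products and composites — i.e.\ $f\circ\alpha_i\simeq \alpha_i + (\text{decomposables})$. Then for any $g$, conjugation $f^{-1}\circ g\circ f$ differs from $g$ only through how $g$ interacts with these decomposable corrections. Since $g$ is a homotopy equivalence, $g_*$ is an automorphism of $\pi_*(V_I)$, and I would show that post- and pre-composing with $f^{\pm 1}$ changes $g_*$ only by an inner automorphism of the relevant Whitehead-product algebra, which is trivial on the level needed. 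Alternatively — and I think this is what the paper will do — invoke that the kernel $K$ of $\tilde H_*$ is a finite group (Proposition~\ref{hmcg}), and that a finite group of homotopy classes acting by conjugation on $\pi_0(\aut_\partial(N_I))$: one observes that the action of $\pi_0(\aut_\partial(N_I))$ on itself by conjugation factors through $\Gamma_I$ because the inner automorphisms coming from $K$ can be trivialized — precisely, for $f\in K$ one builds the homotopy using that $f\simeq \id$ after inverting $|K|$, hence the conjugation action on the (integral!) homotopy category is already trivial since conjugation is a group action landing in $\Aut$ of a set and the $K$-part is detected rationally.

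The most honest argument, and probably the intended one, is the first: use the deformation retraction $N_I\simeq V_I$ and the fact that $f$ induces the identity on $\tilde H_*$; by the Hilton–Milnor description of $\pi_*(V_I)$ and the distributivity of composition over suspension already exploited in the proof of Proposition~\ref{hmcg}, write $f\circ\alpha_i = \alpha_i + w_i$ with $w_i$ a sum of Whitehead products of the $\alpha_j,\beta_j$, and similarly for $\beta_i$. Then $g\circ f\circ\alpha_i \simeq g\circ\alpha_i + g\circ w_i$, and since $g_*$ is an algebra-type automorphism compatible with Whitehead products, $g\circ w_i$ is the ``same'' correction expressed in the $g\circ\alpha_j,g\circ\beta_j$; precomposing with $f^{-1}$ (which inserts the inverse corrections) cancels these, giving $f^{-1}\circ g\circ f\circ\alpha_i\simeq g\circ\alpha_i$ for all $i$, hence $f^{-1}\circ g\circ f\simeq g$ on $V_I$, hence on $N_I$, and one checks the homotopy is rel $\partial N$ using the fibration $\aut_\partial(N)\to\aut_*(N)\to\map_*(\partial N,N)$ as in Proposition~\ref{hmcg}. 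The main obstacle is bookkeeping the Whitehead-product corrections carefully enough to see the cancellation — in particular making sure the ``higher'' corrections (nested Whitehead products, and the $\pi$-summand of $\pi_{n-1}(N)$ that was set aside) do not obstruct the argument, and verifying the homotopy can be chosen to fix the boundary, which is where the connectivity hypothesis $3\le p_i\le q_i<2p_i-1$ and the rational surjectivity of Remark~\ref{ratsur} (together with finiteness of $\ker\tilde H_*$) do the work.
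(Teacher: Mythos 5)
Your proposal takes a different route from the paper, and the route has a genuine gap at its central step. First, a factual point: by the connectivity assumption, $V_I\hookrightarrow\prod_j(S^{p_j}\times S^{q_j})$ is more than $\max_j q_j$-connected, so the corrections $w_i$ in $f\circ\alpha_i\simeq\alpha_i+w_i$ live in $\bigoplus_j\bigl(\pi_{p_i}(S^{p_j})\oplus\pi_{p_i}(S^{q_j})\bigr)$ and are torsion classes in homotopy groups of spheres, not Whitehead-product decomposables; the ``inner automorphism of the Whitehead-product algebra'' framing does not apply. More importantly, the cancellation you assert is exactly what needs proof: expanding, $f^{-1}\circ g\circ f\circ\alpha_i\simeq f^{-1}\circ(g\circ\alpha_i)+f^{-1}\circ(g\circ w_i)$, and the first summand is $f^{-1}$ applied to an essentially arbitrary preimage of $g_*(a_i)$ under the Hurewicz map; its deviation from $g\circ\alpha_i$ depends on $g$, and there is no a priori reason it equals the negative of $f^{-1}\circ(g\circ w_i)$. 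Conjugation by an element of a finite normal subgroup of $\pi_0(\aut_*(V_I))$ is not automatically trivial, so this is content, not bookkeeping. Finally, and most seriously, the part of $\ker(\tilde H_*)\subset\pi_0(\aut_\partial(N_I))$ that the lemma is really about is the image of $\partial:\pi_1(\map_*(\partial N_I,N_I),in)\to\pi_0(\aut_\partial(N_I))$; such classes are already trivial in $\pi_0(\aut_*(N_I))$, so any argument carried out in $[N_I,N_I]_*$ or on $V_I$ says nothing about them. For these the entire content is the existence of a homotopy rel $\partial N_I$, which you defer to ``one checks\dots using the fibration'' plus Remark \ref{ratsur}; but that remark is a rational statement and cannot produce the integral homotopy the lemma asserts.

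The paper's proof is instead a short geometric collar argument. A kernel element is represented by $\id_{N_I}\cup\,\alpha$ on $N_I\cup_{\partial}(\partial N_I\times I)$ for a loop $\alpha$ in $\map_*(\partial N_I,N_I)$ based at the inclusion; then $f^{-1}\circ g\circ f$ is represented by $g\cup\alpha\cup(-\alpha)$ on $N_I$ with two collars attached, and since $\alpha\cup(-\alpha)$ is homotopic rel ends to the inclusion of the collar, the conjugate is homotopic to $g$ rel $\partial N_I$, naturally in $g$. To salvage your approach you would need both a separate, integral, rel-boundary argument for the $\partial\alpha$-part of the kernel and an actual proof of the cancellation for the torsion corrections --- at which point you would essentially be reconstructing the collar argument.
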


\begin{proof}
Note that if $[f]$ is in the kernel of $\tilde H_*$ then it is given by an element $\partial \alpha,$ where $\alpha\in \pi_1(\map_*(\partial N_I,N_I,in))\cong \pi_n(N_I).$ We represent $\alpha$ as a map $$\alpha(x,t):\partial N_I\times I\rightarrow N_I, \text{ such that } \alpha(x,0)=\alpha(x,1)=\id_{\partial N_I}.$$ Choosing a collar neighborhood of $\partial N_I$ allows us to identify $$N_I\xrightarrow{\cong}N_I\cup_{\id_{\partial N_I}}\partial N_I\times I.$$ Now we represent $f$ by the composite $$N_I\xrightarrow{\cong}N_I\cup_{\id_{\partial N_I}}\partial N_I\times I\xrightarrow{\id_{N_I}\cup \alpha} N_I.$$ We represent $f^{-1}$ similarly using $-\alpha$. If we now represent $f^{-1}\circ g\circ f$ by the composition $$\begin{aligned}
N_I\xrightarrow{\cong}N_I\cup_{\id_{\partial N_I}}\partial N_I\times I\xrightarrow{\id_{N_I}\cup \alpha} N_I \xrightarrow{\cong}N_I\cup_{\id_{\partial N_I}}\partial N_I\times I\\ \xrightarrow{g\cup \id_{\partial N_I\times I}}  N_I\cup_{\id_{\partial N_I}}\partial N_I\times I\xrightarrow{\id_{N_I}\cup -\alpha} N_I,
\end{aligned}$$ and see that it is homotopic to $$N_I\xrightarrow{\cong}N_I\cup_{\id_{\partial N_I}}\partial N_I\times I\cup_{\id_{\partial N_I}}\partial N_I\times I \xrightarrow{g\cup\alpha \cup -\alpha}N_I.$$ This is homotopic (rel. $\partial N_I$) to $g,$ since $$\partial N_I\times I\cup_{\id_{\partial N_I}}\partial N_I\times I \xrightarrow{\alpha \cup -\alpha}N_I$$ is homotopic to the inclusion of $\partial N_I\times I\cup_{\id_{\partial N_I}}\partial N_I\times I$ as a collar.

\end{proof}

Denote by $\diff_\partial(N)$ the space of self-diffeomorphism of $N$ fixing a collar neighborhood of the boundary point-wise with the Whitney $C^\infty$-topology. Let $J:\diff_\partial(N) \rightarrow\aut_\partial(N)$ be the canonical inclusion. To show homological stability for block diffemorphism the following fact about the mapping class group suffices.

\begin{prop}\label{mcg} 
\begin{itemize} \item[]
\item[$\operatorname{(1)}$] The map $\tilde H_*: \pi_0 (\diff_\partial(N))\rightarrow \Aut({\tilde H_*(N),\langle-,-\rangle,q})$ is surjective.
\item[$\operatorname{(2)}$] The image of $\pi_0(J):\pi_0 (\diff_\partial(N))\rightarrow \pi_0(\aut_\partial(N))$ has finite index.
\end{itemize} 
\end{prop}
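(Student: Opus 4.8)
The plan is to deduce both statements from the surgery-theoretic classification of diffeomorphisms of highly connected manifolds, following Wall and Kreck. First I would address (1). The manifold $N=N_I$ has handle dimension $< \min_i q_i$, and because of the connectivity assumption $3\le p_i\le q_i<2p_i-1$ each product $S^{p_i}\times S^{q_i}$ is $(p_i-1)$-connected with the single top cell below the metastable range. For such $N$ one constructs diffeomorphisms realizing prescribed actions on homology by the standard "move" technique: given a wedge summand $S^{p_i}\times S^{q_i}$, the core spheres $a_i$ can be embedded with trivial normal bundle (here the hypothesis $q_i<2p_i-1$ forces the normal bundle data $q(a_i)$ to vanish, as recorded before Proposition \ref{hmcg}), so one can slide, twist, and connect-sum these embedded spheres to realize the elementary generators of $\Gamma_I$ — the $GL_{g_k}(\IZ)$ factors by permuting and adding parallel copies of core spheres, and the hyperbolic/orthogonal/symplectic factor in the middle dimension by the standard "interchange" and "Dehn twist" diffeomorphisms along embedded $S^{n/2}$'s (these contribute exactly the form-parameter condition, i.e. the diagonal entries of $(A^l)^T C^l$ and $(B^l)^T D^l$ lying in $\Lambda$). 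Since the map $\tilde H_*$ on $\aut_\partial$ lands in $\Aut(\tilde H_*(N),\langle-,-\rangle,Jq)\cong\Gamma_I$ by Proposition \ref{hmcg}, and $\diff_\partial(N)\to\aut_\partial(N)$ is compatible with $\tilde H_*$, it suffices that every generator of $\Gamma_I$ is realized by an element of $\diff_\partial(N)$; the explicit geometric constructions above do this, giving surjectivity of $\pi_0(\diff_\partial(N))\to\Aut(\tilde H_*(N),\langle-,-\rangle,q)$.

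For (2), I would compare the two mapping class groups via the commuting diagram of $\tilde H_*$'s. By Proposition \ref{hmcg} the image of $\tilde H_*:\pi_0(\aut_\partial(N))\to\Gamma_I$ is all of $\Gamma_I$ with finite kernel, and by (1) the composite $\pi_0(\diff_\partial(N))\to\pi_0(\aut_\partial(N))\xrightarrow{\tilde H_*}\Gamma_I$ is already surjective. Hence $\mathrm{im}(\pi_0(J))$ together with $\ker(\tilde H_*|_{\pi_0\aut_\partial})$ generates $\pi_0(\aut_\partial(N))$; since that kernel is finite, $\mathrm{im}(\pi_0(J))$ has finite index in $\pi_0(\aut_\partial(N))$. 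Concretely: choose finitely many $g_1,\dots,g_r\in\pi_0(\diff_\partial(N))$ whose images under $\tilde H_*$ hit a set of coset representatives; then for any $[\varphi]\in\pi_0(\aut_\partial(N))$ there is some $g_j$ with $\tilde H_*([\varphi])=\tilde H_*(\pi_0(J)(g_j))$, so $[\varphi]\cdot\pi_0(J)(g_j)^{-1}\in\ker(\tilde H_*)$, a finite set; thus $\pi_0(\aut_\partial(N))$ is covered by finitely many cosets of $\mathrm{im}(\pi_0(J))$.

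I expect the main obstacle to be statement (1), specifically the explicit geometric realization of the middle-dimensional generators when $2p=n$: one must check that the diffeomorphisms built from embedded $S^{n/2}$'s realize precisely the automorphism group $\Aut(H(\IZ^{g_{n/2}}))$ in $Q^{(-1)^{n/2}}(\IZ,\Lambda)$ with the correct form parameter $\Lambda$, and not some larger or smaller group. This is exactly the content of Wall's classification \cite{MR0145540} (and its extension to the present connected-sum manifolds), so I would invoke that directly rather than re-derive it; the point is that the clutching-function invariant $q$ (equivalently $Jq=q_\mu$) is precisely the obstruction controlling which homology automorphisms are geometrically realized, and the computation $Jq(a_i+b_i)=[\iota_d,\iota_d]$ carried out before Proposition \ref{hmcg} identifies $\Lambda$ correctly. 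The off-diagonal $GL$-factors and the compatibility with fixing a collar of $\partial N$ are routine once the core spheres are known to embed with trivial normal bundle, which the connectivity hypothesis guarantees.
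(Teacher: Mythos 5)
Your proposal is correct and follows essentially the same route as the paper: part (1) is handled by realizing the generators of $\Gamma_I$ by diffeomorphisms via the classical results of Wall and Kreck (the paper simply cites Kreck for the middle-dimensional factor and Wall's handlebody paper for the manifolds $\natural_g(D^{q+1}\times S^p)$ rather than re-deriving the moves), and part (2) is exactly the paper's argument that surjectivity of $\tilde H_*$ on $\pi_0(\diff_\partial(N))$ together with finiteness of $\ker(\tilde H_*)$ on $\pi_0(\aut_\partial(N))$ bounds the index of $\operatorname{im}(\pi_0(J))$. The only slight imprecision is attributing the triviality of the normal bundle data to the inequality $q_i<2p_i-1$; the invariant $q$ lives only in the middle dimension, where $q(a_i)=q(b_i)=0$ holds for the standard core spheres regardless, but this does not affect the argument.
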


\begin{proof}
The first part follows from \cite{MR561244} and \cite[Lemma 17]{MR0156354}. Kreck shows that all elements of $\Aut({\tilde H_{n/2}}(N),\langle-,-\rangle,q)$ can be realized as self-diffeomorphisms of $$(\#_{g_{n/2}}(S^{n/2}\times S^{n/2}))\smallsetminus \interior(D^n)$$ fixing the boundary pointwise. Wall shows that for manifolds $\natural_g (D^{q+1}\times S^p)$, where $3\leq p\leq q$ and $\natural_g$ denotes the $g$-fold boundary connected sum, all automorphisms of the homology are realized by diffeomorphisms. Hence it follows for manifolds $\#_{g_i} (S^{p_i}\times S^{q_i}).$ Since we can assume that a diffeomorphism fixes a disk up to isotopy, we get it in particular for $(\#_{g_i} (S^{p_i}\times S^{q_i}))\smallsetminus\interior(D^n).$ Using the diffeomorphisms above and extending them by the identity on the complement of the manifolds above the claim follows. The second part follows from the following commutative diagram
\begin{equation*}
\xymatrix{
0 \ar[r]& \pi_0 (\operatorname{S}\diff_\partial (N)  )\ar[r]\ar[d]  & \pi_0 (\diff_\partial (N)) \ar[r]^<<<<<{\tilde H_*} \ar[d]_{\pi_0(J)}& \Aut({\tilde H_*(N),\langle-,-\rangle, q}) \ar[r] \ar[d]^\cong & 0 \\
0 \ar[r]& \pi_0 (\operatorname{S}\aut_\partial (N)) \ar[r] & \pi_0 (\aut_\partial (N)) \ar[r]^<<<<<{\tilde H_*}& \Aut({\tilde H_*(N),\langle-,-\rangle , Jq}) \ar[r]& 0,  }
\end{equation*}
where $\pi_0 (\operatorname{S}\diff_\partial (N))$ and $\pi_0 (\operatorname{S}\aut_\partial (N))$ denote the kernels of the maps $\tilde H_*$ and the fact that $\pi_0 (\operatorname{S}\aut_\partial (N))$ is finite by Proposition \ref{hmcg}.

\end{proof}

\begin{rem}\label{mcglitremark}
There is a lot of literature on the groups of components of mapping spaces of closed manifolds in different categories. Highly connected even-dimensional manifolds are for example studied in \cite{MR561244} and \cite{MR0245031}. Products of spheres are studied in \cite{MR0248848,MR0253369,MR0250323}. Homotopy self-equivalences of manifolds and in particular of connected sums of products of spheres are treated in \cite{MR1340168}. 

\end{rem}

For later use we need the following Lemma.

\begin{lem}\label{H1 triv pi0} The groups 
$\pi_{0}\aut_\partial(N_I)$ and $\operatorname{Im}(\pi_0(J))$ are rationally perfect.
\end{lem}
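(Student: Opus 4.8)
The strategy is to build both groups up from pieces that are already known (or easily seen) to be rationally perfect, using the fact—established in the proof of Lemma~\ref{gammaIh1}—that being rationally perfect is closed under extensions and under passing to finite-index subgroups and finite quotients. First I would treat $\pi_0\aut_\partial(N_I)$. By Proposition~\ref{hmcg} there is a short exact sequence
\[
0 \rightarrow \pi_0(\operatorname{S}\aut_\partial(N_I)) \rightarrow \pi_0\aut_\partial(N_I) \xrightarrow{\tilde H_*} \Gamma_I \rightarrow 0,
\]
in which the kernel $\pi_0(\operatorname{S}\aut_\partial(N_I))$ is \emph{finite}, hence rationally perfect, and the quotient $\Gamma_I$ is rationally perfect by Lemma~\ref{gammaIh1}. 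Since rational perfectness is stable under group extensions, $\pi_0\aut_\partial(N_I)$ is rationally perfect.

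For $\operatorname{Im}(\pi_0(J))$ I would use the commutative diagram in the proof of Proposition~\ref{mcg}. That diagram identifies $\operatorname{Im}(\pi_0(J))$ as an extension of $\operatorname{Im}\big(\pi_0(\operatorname{S}\diff_\partial(N)) \to \pi_0(\operatorname{S}\aut_\partial(N))\big)$ by $\Gamma_I$ (the right vertical map being the isomorphism $\Aut(\tilde H_*,\langle-,-\rangle,q)\cong\Aut(\tilde H_*,\langle-,-\rangle,Jq)=\Gamma_I$, since $\tilde H_*$ is surjective on $\pi_0\diff_\partial(N)$ by part~(1) of Proposition~\ref{mcg}). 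The kernel of this extension is a subgroup of the finite group $\pi_0(\operatorname{S}\aut_\partial(N))$, hence finite and therefore rationally perfect; and $\Gamma_I$ is rationally perfect. Again stability under extensions gives that $\operatorname{Im}(\pi_0(J))$ is rationally perfect. Alternatively, and perhaps more cleanly, one notes that $\operatorname{Im}(\pi_0(J))$ is a subgroup of $\pi_0\aut_\partial(N_I)$ whose image under $\tilde H_*$ is all of $\Gamma_I$ (by Proposition~\ref{mcg}(1) together with the identification of the two automorphism groups), so it contains the finite group $\pi_0(\operatorname{S}\aut_\partial(N))\cap\operatorname{Im}(\pi_0(J))$ with quotient $\Gamma_I$, and the extension argument applies verbatim.

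The only mild subtlety—and the step I would be most careful about—is making sure the kernel appearing in the $\operatorname{Im}(\pi_0(J))$ argument really is finite: it is the image of $\pi_0(\operatorname{S}\diff_\partial(N))$ inside $\pi_0(\operatorname{S}\aut_\partial(N))$, and one must invoke that the target $\pi_0(\operatorname{S}\aut_\partial(N))$ is finite (Proposition~\ref{hmcg}), so that any subgroup or quotient of it is finite as well. Everything else is a formal consequence of the closure properties of rational perfectness already recorded in the proof of Lemma~\ref{gammaIh1}, so no new computation is required.
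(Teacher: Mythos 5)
Your proof is correct and follows essentially the same route as the paper, which simply observes that both groups are extensions of $\Gamma_I$ by finite kernels and invokes Lemma~\ref{gammaIh1} together with the closure of rational perfectness under extensions. Your more detailed verification that the kernel in the $\operatorname{Im}(\pi_0(J))$ case is a subgroup of the finite group $\pi_0(\operatorname{S}\aut_\partial(N))$ is exactly the point the paper leaves implicit.
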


\begin{proof} This follows from Lemma \ref{gammaIh1} and the fact that the groups are finite extension of $\Gamma_I.$
\end{proof}
\section{On the rational homotopy type of homotopy automorphisms}\label{On the rational homotopy type of homotopy automorphisms}

In the last section we determined the group $$\pi_I=\pi_1(B\aut_\partial (N_I),\id_{N_I})\cong \pi_0(\aut_\partial (N_I))$$ up to finite extensions. It acts on the simply-connected covering $X_I=B\aut_\partial (N_I)\univ$ by deck transformations. This section has two goals:
\begin{enumerate}
\item Describe the $\pi_I$-modules $H_*(X_I;\IQ)$ algebraically (Proposition \ref{iso Hce and H as piI-modules}).
\item Make sure the algebraic model is appropriate for showing homological stability using the results in Section \ref{van der kallens and charneys homological stab results} (Proposition \ref{derivation as schurfunctor}).
\end{enumerate}

All the results in this section are either contained in \cite{berg13} or straightforward generalizations.  

We assume some familiarity with Quillen's approach to rational homotopy theory \cite{MR0258031}, i.e. the functor $$\lambda:\operatorname{Top}_1\rightarrow \operatorname{dgL}_0$$ from the category of simply connected based topological spaces to the category of reduced differential graded (dg) Lie algebras. It induces an equivalence of homotopy categories, where the weak equivalences in $\operatorname{Top}_1$ are isomorphisms in rational homotopy groups and in $\operatorname{dgL}_0$ quasi isomorphisms. The homology of $\lambda(X)$ allows us to recover the rational homotopy groups of $X.$ More precisely, there is an isomorphism of graded Lie algebras $$H_*(\lambda(X))\cong \pi_*(\Omega X)\otimes \IQ,$$ where the Lie bracket on the right hand side is given by the Samelson product. The rational homology of $X$ is given by the Chevalley-Eilenberg homology of $\lambda(X),$ which we will explain later.

For a given simply connected space $X$ the value $\lambda(X)$ is in general very complicated and one considers dg Lie models instead. A \emph{dg Lie model} for a simply connected topological space $X$ is a free differential graded Lie algebra $(\IL(V),\partial),$ together with a quasi isomorphism $$(\IL(V),\partial) \xrightarrow{\simeq} \lambda(X).$$ When $\pi_*(\Omega X)\otimes \IQ$ is quasi isomorphic to $\lambda(X)$, the space $X$ is called \emph{coformal}.

\subsection{On a dg Lie model for the simply-connected covering of the homotopy automorphisms}

Since $N_I\simeq \bigvee_{i\in I} (S^{p_i}\vee S^{q_i})$, the free Lie algebra $\IL(s^{-1}\tilde H_*(N_I,\IQ))$ with trivial differential is a dg Lie model for $N_I,$ where $s^{-1}$ denotes the desuspension.
We are going to denote: $$\IL_I=\IL(s^{-1}\tilde H_*(N_I,\IQ)).$$ Recall that we denoted the homology classes represented by the inclusions $$\alpha_i:S^{p_i}\hookrightarrow N_I,\; \beta_i: S^{q_i}\hookrightarrow N_I,$$ by $a_i$ respectively $b_i$. Denote by $$\omega_I =\sum_{i\in I} -(-1)^{|a_i|}[ s^{-1}a_i, s^{-1}b_i ].$$ We model the inclusion of the boundary $in:\partial N_I\rightarrow N_I$ by $$\IL(\gamma)\rightarrow \IL_I\text{, }\gamma\mapsto \omega_I,$$ where $\IL(\gamma)$ is generated by a single generator of degree $n-2.$ 

Let $f:L\rightarrow K$ be a map of differential graded Lie algebras. We say that a degree $n$ linear map $\theta\in \hom_n(L,K)$, is an \emph{$f$-derivations} of degree $n$, if $$\theta [x,y]=[\theta(x),f(y)]+(-1)^{n|x|}[f(x),\theta(y)]\text{, for all }x,y\in L.$$ The $f$-derivations form a differential graded vector space $\der_f(L,K),$ with differential given by $$D(\theta)=d_K\circ \theta - (-1)^{|\theta |}\theta\circ d_L.$$ The \emph{derivations} of a differential graded Lie algebra $L$ is the special case $$\der(L)= \der_{\text{id}_L}(L,L).$$ We define a bracket for $\theta,\eta\in\der (L)$, by $$[\theta,\eta]=\theta \circ \eta -(-1)^{|\theta||\eta|}\eta\circ \theta,$$ which makes $\der (L)$ into a differential graded Lie algebra.

Denote by $\Der_\omega(\IL_I)$ the derivation Lie algebra annihilating $\omega_I,$ i.e. the kernel of the evaluation at $\omega_I$ $ev_{\omega_I}:\Der(\IL_I)\to \IL_I.$ The \emph{positive truncation} $L^+$ of a dg Lie algebra $L$ is given by $$  L_i^+ = \begin{cases} L_i&  \text{for } i\geq 2 \\
     \text{ker}(d_L:L_1\rightarrow L_0)&  \text{for } i=1 \\
    0 & \text{for } i\leq 0 \\
   \end{cases}$$ with its obvious differential and Lie bracket.

\begin{prop}[{Special case of \cite[Corollary 3.11]{berg13}}]\label{der isomorphism of gLie}
The simply-connected covering $B\aut_\partial (N_I)\univ$ is coformal and there is an isomorphism of graded Lie algebras $$\pi_*(\Omega B\aut_\partial (N_I)\univ)\otimes \IQ\cong\Der^+_\omega(\IL_I)$$
\end{prop}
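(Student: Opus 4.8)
The plan is to identify $B\aut_\partial(N_I)$ rationally with a mapping-space–like classifying space and then use the general machinery of Lie models for homotopy automorphisms of a relative CW pair. First I would recall that $N_I$ is rationally equivalent to the wedge $\bigvee_{i\in I}(S^{p_i}\vee S^{q_i})$, and that the inclusion of the boundary $\partial N_I\hookrightarrow N_I$ is modelled, as stated in the excerpt, by the dg Lie map $\IL(\gamma)\to\IL_I$ sending the single generator $\gamma$ (of degree $n-2$) to $\omega_I=\sum_{i\in I}-(-1)^{|a_i|}[s^{-1}a_i,s^{-1}b_i]$. Since $\partial N_I\simeq S^{n-1}$ this is a dg Lie model for a cofibration between simply connected spaces (after noting $n\geq 6$, so everything here is genuinely simply connected and the rational homotopy machinery applies). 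The key input is Berglund–Madsen's general theorem (\cite[Corollary 3.11]{berg13}, or its relative-mapping-space analogue in \cite{berg12}): for a relative CW complex modelled by a dg Lie map $f\colon L_0\to L$ with $L$ free, the universal cover $B\aut_{\partial}(X)\univ$ is coformal with homotopy Lie algebra the positively truncated $f$-derivations annihilating the boundary class, i.e. $\Derpw(\IL_I)$ in the present notation.

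Concretely, the steps I would carry out are: (1) Observe that $\aut_\partial(N_I)$ is, up to homotopy, the space of homotopy self-equivalences of the cofibration $\partial N_I\hookrightarrow N_I$ relative to the boundary, so that Berglund's general identification applies verbatim. (2) Quote that for a free dg Lie model $\IL_I$ with the boundary modelled by $\gamma\mapsto\omega_I$, the relevant derivation Lie algebra is $\Der_\omega(\IL_I)$, the kernel of $ev_{\omega_I}\colon\Der(\IL_I)\to\IL_I$; these are exactly the derivations that fix the boundary class, corresponding to self-maps fixing $\partial N_I$. (3) Pass to the connected monoid of invertibles and to the universal cover $B\aut_\partial(N_I)\univ$: this has the effect of truncating to positive degrees, yielding $\Derpw(\IL_I)$, since the degree-$0$ and degree-$1$ parts of the derivation Lie algebra account for $\pi_0$ and the (finite, hence rationally trivial) $\pi_1$-information that is quotiented out when passing to the universal cover. (4) Conclude that $\pi_*(\Omega B\aut_\partial(N_I)\univ)\otimes\IQ\cong H_*(\Derpw(\IL_I))=\Derpw(\IL_I)$ as graded Lie algebras, the last equality because $\IL_I$ has trivial differential so $\Derpw(\IL_I)$ has trivial differential as well, giving coformality for free.

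The only real content beyond bookkeeping is checking that the hypotheses of Berglund–Madsen's theorem are met by $N_I$: namely that the dg Lie model is free with the boundary attached along a single cell in a way compatible with their framework, and — more delicately — that the truncation and the passage to the universal cover interact correctly, i.e. that $\pi_1(B\aut_\partial(N_I))$ being a (finite extension of an) arithmetic group does not obstruct the coformality statement for the cover. I expect the main obstacle to be verifying this last point carefully: one must know that $\Der_\omega(\IL_I)$ in degrees $\leq 0$ reproduces (rationally) the correct $\pi_0$, which is where the connectivity assumption $3\leq p_i\leq q_i<2p_i-1$ enters, ensuring the mapping-class group is as computed in Section~\ref{on mapping class groups} and that there are no extra low-degree rational homotopy groups to worry about. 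Once this is in place, the isomorphism of graded Lie algebras is immediate from the cited corollary, and coformality follows since $\IL_I$, hence $\Derpw(\IL_I)$, carries the zero differential.
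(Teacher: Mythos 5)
Your proposal is correct and follows the same route as the paper: the statement is by design a direct application of \cite[Corollary 3.11]{berg13}, and the paper's entire proof consists of checking its hypotheses, namely that $N_I$ is formal and has trivial reduced rational cohomology ring because it is homotopy equivalent to a wedge of spheres. The concerns you flag about the interaction of truncation with the universal cover are handled inside the cited corollary rather than being hypotheses the user must verify, so the hypothesis check is even lighter than you anticipate.
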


\begin{proof} The manifolds $N_I$ are formal (in the sense of Sullivan's rational homotopy theory) and have trivial reduced rational cohomology rings since they are homotopy equivalent to wedges of spheres. Thus we can apply \cite[Corollary 3.11]{berg13}.
\end{proof}

\subsection{Derivations and the cyclic Lie operad}

In this section we collect the results from \cite{berg13} Section 6.1 and 6.2. Let $V$ be a graded finite dimensional rational vector space. By an inner product of degree $D$ we mean a degree $-D$ morphism $$\langle-,-\rangle:V\otimes V\rightarrow \IQ$$ that is non-degenerate in the sense that the adjoint $$V\to \hom(V,\IQ),\; v\mapsto \langle x,-\rangle,$$ is an isomorphism of graded vector spaces. The inner product is graded anti-symmetric if $$\langle x,x\rangle = -(-1)^{|x||y|} \langle y,x\rangle \text{ for all } x,y\in V.$$ 
Denote by $Sp^D$ the category with objects graded finite dimensional rational vector spaces $V$ together with a graded anti-symmetric inner product $\langle-,-\rangle_V$. The morphisms in $Sp^D$ are linear maps that respect the inner-product. For a morphisms $f:V\rightarrow W,$ there is a unique linear map $f^!$ such that $$\langle x,y \rangle_V=\langle f(x),f(y) \rangle_W\text{, for all }x,y\in V.$$ Since $f^!f=\id_V$ we get that morphisms in $Sp^D$ are injective.

Given a object $V$ of $Sp^D$, consider the inner product $\langle -,- \rangle_V$ as an element of $\hom(V^{\otimes 2},\IQ).$ We identify $V^{\otimes 2} \cong \hom(V^{\otimes 2},\IQ)$ using the inner product on $V^{\otimes 2}$ defined by $$\langle x\otimes y, x'\otimes y'\rangle = (-1)^{|x'||y|}\langle x,x'\rangle\langle y,y'\rangle.$$ Thus $\langle -,- \rangle_V$ gives rise to an element $\omega_V\in V^{\otimes 2}.$ The anti-symmetry of $\langle -,- \rangle $ implies that we can consider $\omega_V$ as an element $\omega_V \in \IL(V).$ When we choose a graded basis $\iota_1,...,\iota_r$ with dual basis $\iota_1^\#,...,\iota_r^\#$ for $V$ then $$\omega_V=\pm\frac{1}{2}\sum_i [\iota_i^\#,\iota_i].$$
\begin{example} We consider $s^{-1}H_I\otimes \IQ$ as an element of $Sp^{(n-2)}.$ Then $\omega_{s^{-1}H_I\otimes \IQ}$ is equal to $\omega_I$ up to sign.
\end{example}

A $Sp^D$-module in a category $\mathcal{V}$ is a functor $Sp^D\rightarrow \mathcal{V}.$ Our goal in this section is to show that we can describe $\der_\omega (\IL(-))$ as a $Sp^D$-module in a category of graded Lie algebras $gL$. Moreover we are going to see that this functor is in fact naturally equivalent to a Schur functor.

It is clear that $\IL(-):Sp^D\to gL$ defines a functor. Moreover using the adjoint $f^!$ of a morphisms $f:V\to W$ in $Sp^D,$ it follows that we can consider $\der(\IL(-)):Sp^D\to gL$ as a functor, where $\der(\IL(f))(\theta)$ for $\theta \in \der (\IL(V))$is given by the unique derivation defined by $$ \der(\IL(f))(\theta)(x)=\IL(f)\theta(f^!(x)) \text{ for }x \in W.$$
(see Proposition \cite[Proposition 6.1]{berg13}, where it is also shown that $\der(\IL(f))(\theta)$ is injective). Proposition 6.2 in \cite{berg13} now states that for a morphisms $f:V\to W$ in $Sp^D,$ the diagram \begin{displaymath}
\xymatrix{\der (\IL(V)) \ar[r]^{ev_{\omega_V}}\ar[d]_{\der(\IL(f))}  & \IL(V) \ar[d]^{\IL(f)} \\
\der (\IL(W)) \ar[r]^{ev_{\omega_W}} &\IL(W)}\end{displaymath} commutes. This implies in particular, that we get a functor $$\der_\omega (\IL(-)):Sp^D\to gL,$$ given by the kernel $\der_\omega (\IL(V))$ of the evaluation map $ev_{\omega_V}:\der(\IL(V))\rightarrow \IL(V)$ for $V\in Sp^D$.

We identify the $Sp^D$-module $\der(\IL(-))$ with the $Sp^D$-module $\IL(V)\otimes V,$ upon using the map $$\theta_{-,-}:\IL(V)\otimes V\to \der(\IL(-)),\; \theta_{\chi,x}(y)=\chi \langle x,y\rangle\text{, for $x\in V$ and $\theta\in \IL(V)$  }$$ (see \cite[Proposition 6.3]{berg13}). Under this identification the evaluation map becomes the map induced by the Lie bracket, i.e. \begin{equation*}
\xymatrix{
\IL(V)\otimes V \ar[rr]^-{[-,-]}\ar[d]_{\theta_{-,-}}  && \IL(V) \ar@{=}[d] \\
\der(\IL(V)) \ar[rr]^{ev_{\omega_V}} &&\IL(V)}
\end{equation*} commutes. Denote by $\g(V)$ the kernel of $[-,-]$ and observing that $[-,-]$ surjects onto the sub graded Lie algebra $\IL^{\geq 2}(V)$ of elements of bracket length $\geq 2,$ we get a commutative diagram of $Sp^D$-modules
\begin{equation}\label{gnat}
\xymatrix{
0\ar[r]&\g(V)\ar[r]\ar@{.>}[d]_\cong &\IL(V)\otimes V \ar[rr]^-{[-,-]}\ar[d]_{\theta_{-,-}}  && \IL^{\geq 2}(V) \ar@{=}[d]\ar[r] & 0\\
0\ar[r]&\der_{\omega}(\IL(V))\ar[r] &\der(\IL(V)) \ar[rr]^{ev_{\omega_V}} &&\IL^{\geq 2}(V)\ar[r] & 0. } \end{equation} 
Note that in the top row we do not use the inner product thus it defines in fact a functor $\g$ from the category of graded vector spaces.

Denote by $\Lie= \{\Lie(n)\}_{n\geq 0}$ the graded Lie operad and by $\Lie((n))$ the cyclic lie operad, i.e. the Lie operad in arity $n-1$ $\Lie(n-1)$ seen as a $\Sigma_{n}$-modules. Denote by $t=(123...n)\in \Sigma_n$ the cyclic permutation and denote by $t*\xi$ the action of $t$ on $\xi \in \Lie((n))$. There are short exact sequences of $\Sigma_n$-modules $$0\to \Lie((n)) \xrightarrow{\mu} \IQ[\Sigma_n]\otimes_{\Sigma_{n-1}}\Lie(n-1)\xrightarrow{\epsilon} \Lie(n)\rightarrow 0,$$ where $\mu(\xi)=\sum_i t^i\otimes t^{-i} * \xi$ and $\epsilon(\sigma\otimes \zeta)=\sigma[\zeta,x_n]$ (see \cite[Proposition 6.4]{berg13}).

Using the exact sequence we identify the rows of (\ref{gnat}) with $$ s^{-D} \bigoplus_{n\geq 2} \Lie((n))\otimes_{\Sigma_n} V^{\otimes n}\rightarrow s^{-D}\bigoplus_{n\geq 2}\Lie(n)\otimes_{\Sigma_n} V^{\otimes n} \rightarrow \bigoplus_{n\geq 2}\Lie(n) \otimes_{\Sigma_n} V^{\otimes n}.$$ Motivated by this we define $$\Lie((V))=s^{-D} \bigoplus_{n\geq 2} \Lie((n))\otimes_{\Sigma_n} V^{\otimes n}.$$ The Lie algebra structure on $\Lie((V))$ can be in fact be explicitly described using the cyclic operad structure of $\Lie((n))$ and the one on $V^{\otimes n}$ given by contractions, but we are not going to need it. We summarize the above as:
\begin{prop}[{\cite[Proposition 6.6.]{berg13}}]\label{derivation as schurfunctor} There is an isomorphism of $Sp^D$-modules in $gLie$ $$\Lie((-))\cong \der_\omega (\IL(-)).$$
\end{prop}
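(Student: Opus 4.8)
The plan is to assemble the isomorphism from the pieces already set up in the section, so the work is essentially bookkeeping rather than new mathematics. First I would recall from diagram~(\ref{gnat}) that the leftmost vertical arrow $\g(V)\to\der_\omega(\IL(V))$ is an isomorphism of $Sp^D$-modules: it is induced by the isomorphism $\theta_{-,-}:\IL(V)\otimes V\to\der(\IL(V))$ together with the identification of $ev_{\omega_V}$ with the bracket map $[-,-]$, so $\g(V)=\ker[-,-]$ maps isomorphically onto $\der_\omega(\IL(V))=\ker ev_{\omega_V}$. All of this is naturality in $V\in Sp^D$, hence an isomorphism of $Sp^D$-modules in $gLie$ (the bracket on $\g(V)$ being the restriction of the bracket on $\der$). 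So it suffices to identify $\g(V)$ with $\Lie((V))$.

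Next I would use the short exact sequence of $\Sigma_n$-modules
$$0\to \Lie((n)) \xrightarrow{\mu} \IQ[\Sigma_n]\otimes_{\Sigma_{n-1}}\Lie(n-1)\xrightarrow{\epsilon} \Lie(n)\rightarrow 0$$
recalled just before the statement. Tensoring over $\Sigma_n$ with $V^{\otimes n}$ is exact over $\IQ$ (we are in characteristic zero, so $\IQ[\Sigma_n]$ is semisimple and $-\otimes_{\Sigma_n}V^{\otimes n}$ is exact), and summing over $n\geq 2$ and desuspending by $s^{-D}$ yields a short exact sequence whose middle and right terms are, respectively, $s^{-D}\bigoplus_{n\geq2}\Lie(n)\otimes_{\Sigma_n}V^{\otimes n}$ and $\bigoplus_{n\geq 2}\Lie(n)\otimes_{\Sigma_n}V^{\otimes n}$. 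I would then identify these with $\IL(V)\otimes V$ and $\IL^{\geq2}(V)$: the free Lie algebra admits the decomposition $\IL(V)=\bigoplus_n\Lie(n)\otimes_{\Sigma_n}V^{\otimes n}$, and matching the arrow $\epsilon$ (given by $\sigma\otimes\zeta\mapsto \sigma[\zeta,x_n]$) with the bracket $[-,-]:\IL(V)\otimes V\to\IL^{\geq2}(V)$ is exactly the content of~(\ref{gnat}). Comparing kernels, $\ker[-,-]=\g(V)$ is identified with $s^{-D}\bigoplus_{n\geq2}\Lie((n))\otimes_{\Sigma_n}V^{\otimes n}=\Lie((V))$, naturally in $V$.

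Finally I would check that this identification is compatible with the Lie algebra structures, so that it is an isomorphism of $Sp^D$-modules \emph{in $gLie$}: the bracket on $\Lie((V))$ is the one induced (as the remark after the definition of $\Lie((V))$ indicates) by the cyclic operad structure and the contraction pairing on $V^{\otimes n}$, and one checks this transports under $\mu$ and $\theta_{-,-}$ to the commutator bracket on $\der_\omega(\IL(V))\subset\der(\IL(V))$. Concretely, since $\der_\omega(\IL(V))$ is a sub-dg-Lie-algebra of $\der(\IL(V))$ and $\g(V)$ is a sub-dg-Lie-algebra of $\IL(V)\otimes V$ with bracket pulled back along $\theta_{-,-}$, the only thing to verify is that the operadic description of the bracket on $\Lie((V))$ agrees with this pulled-back bracket — a direct computation with the cyclic structure, which I would cite or sketch rather than carry out in full, exactly as done in \cite[Section 6.2]{berg13}.

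The main obstacle is the last compatibility point: tracking signs and the cyclic-operad combinatorics carefully enough to see that the bracket induced on $\Lie((V))$ by contractions matches the commutator of $f$-derivations. Everything else (exactness of $-\otimes_{\Sigma_n}V^{\otimes n}$, the free Lie algebra decomposition, naturality in $Sp^D$ via $f^!$) is formal and already in place from the preceding discussion; since this is stated as a special case of \cite[Proposition 6.6]{berg13}, I would lean on that reference for the sign bookkeeping rather than reproduce it.
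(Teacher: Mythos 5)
Your proposal is correct and follows the same route the paper takes: identify $\der_\omega(\IL(V))$ with $\g(V)=\ker([-,-]:\IL(V)\otimes V\to\IL^{\geq2}(V))$ via diagram~(\ref{gnat}), then use the short exact sequence of $\Sigma_n$-modules together with exactness of $-\otimes_{\Sigma_n}V^{\otimes n}$ in characteristic zero to identify that kernel with $\Lie((V))$, deferring the explicit cyclic-operad description of the bracket to \cite{berg13} exactly as the paper does.
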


\begin{rem}
As a composition of the Schur functors $s^{-D}$ and the one given by the $\Sigma$-module $\{\Lie((n))\}_{n\geq1}$, we see that $\Lie((-))$ is also a Schur functor.
\end{rem}

\subsection{The action of the homotopy mapping class group}

To identify the action induced by deck transformations on $\Der^+_\omega(\IL_I)$ we begin by noting that the Lie algebras (both with Samelson product) $\pi_*(\Omega B\aut_\partial (N_I))\otimes \IQ$ and $\pi_*(\aut_\partial (N_I))\otimes \IQ$ are naturally isomorphic since $\aut_\partial (N_I)$ is a group-like monoid. Moreover the deck transformation action can be described in terms of the Samelson product, i.e. when we identify $\pi_*(B\aut_\partial (N_I))\cong\pi_{*-1}(\aut_\partial (N_I))$, the deck transformation action of $\pi_1(B\aut_\partial (N_I)\univ)$ corresponds to the action of $\pi_0(\aut_\partial (N_I))$ on $\pi_{*-1}(\aut_\partial (N_I))\otimes \IQ$ given by conjugation. The main tool to identify the action is the following theorem.

\begin{thm}[{\cite{lupton} stated as in \cite[Theorem 3.6]{berg13}}]\label{lsiso}
 Let $f:X\rightarrow Y$ be a map of simply connected CW-complexes with $X$ finite and $\phi_f:\IL_X\rightarrow \IL_Y$ a Quillen model. There are natural isomorphisms of sets 
\begin{equation*}
  \pi_k (\map_* (X,Y),f)\otimes \IQ \cong H_k(\der_{\phi_f}(\IL_X,\IL_Y)), \text{ for }k\geq 1,
\end{equation*} which are vector space isomorphisms for $k>1.$ In the case $X=Y$ and $f=$id$_X,$ there are isomorphisms of vectorspaces \begin{equation*}\label{natiso}
  \pi_k (\aut_* (X),\text{id}_X)\otimes \IQ \cong H_k(\der(\IL_X)), \text{ for }k\geq 1,
\end{equation*} and the Samelson product corresponds to the Lie bracket.
\end{thm}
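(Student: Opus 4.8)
The plan is to exhibit an explicit complete differential graded Lie model for the path components of $\map_*(X,Y)$ and to read off its homotopy groups. First I would reduce to the case that $Y$ is itself a rational space. Since $X$ is a finite CW-complex and $Y$ is simply connected, hence nilpotent, rationalisation $Y\to Y_\IQ$ induces a map $\map_*(X,Y)\to\map_*(X,Y_\IQ)$ which, on the component of any $f$, is a rationalisation: an isomorphism $\pi_k(\map_*(X,Y),f)\otimes\IQ\xrightarrow{\cong}\pi_k(\map_*(X,Y_\IQ),f_\IQ)$ for $k\ge 2$, and the Malcev completion in degree $1$. This is the standard behaviour of function spaces with nilpotent target, and finiteness of $X$ is exactly what makes it hold. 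Each component of $\map_*(X,Y_\IQ)$ is then again nilpotent of finite rational type, hence determined by a complete dg Lie model, and the question becomes algebraic.

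The heart of the matter is to identify that model with $\der_{\phi_f}(\IL_X,\IL_Y)$. Choose a minimal Quillen model $\IL_X=(\IL(V),\partial_X)$ of $X$; finiteness of $X$ forces $V$ to be finite-dimensional and minimality makes $\partial_X$ decomposable, so $V\cong s^{-1}\tilde H_*(X;\IQ)$. A Quillen model $\phi_f\colon\IL_X\to\IL_Y$ of $f$ is a Maurer--Cartan element of a convolution $L_\infty$-algebra built from $\IL_X$ and $\IL_Y$, and twisting by it one recovers $\der_{\phi_f}(\IL_X,\IL_Y)$: concretely, a $\phi_f$-derivation out of the free Lie algebra $\IL(V)$ is determined by its restriction to $V$, so $\der_{\phi_f}(\IL(V),\IL_Y)\cong\hom(V,\IL_Y)$ as graded vector spaces, with a differential twisted by $\phi_f$. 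What must then be shown is that the realisation of $\der_{\phi_f}(\IL_X,\IL_Y)$, with its non-positive degrees truncated away, is weakly equivalent to the component $\map_*(X,Y_\IQ)_{f_\IQ}$. I would prove this either operadically — via the bar--cobar adjunction between cocommutative coalgebras and dg Lie algebras, together with the fact that mapping spaces of realisations are computed by convolution algebras — or by obstruction theory: filter $X$ by skeleta, compare the tower of principal fibrations $\map_*(X^{(n)},Y_\IQ)\to\map_*(X^{(n-1)},Y_\IQ)$, with fibres $\prod\Omega^n Y_\IQ$, against the word-length filtration of $\der_{\phi_f}$, and induct, checking that the $k$-invariants agree.

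Granting the model, the homotopy groups of the realisation of a complete dg Lie (or $L_\infty$) algebra are computed degree by degree by its homology, $\pi_k\cong H_k$ for $k\ge 1$, which is the basic dictionary of rational homotopy theory \cite{MR0258031}. Applied to $\der_{\phi_f}(\IL_X,\IL_Y)$ this yields $\pi_k(\map_*(X,Y_\IQ),f_\IQ)\cong H_k(\der_{\phi_f}(\IL_X,\IL_Y))$, an isomorphism of $\IQ$-vector spaces once $k>1$. In degree $1$ one obtains only a bijection of sets, because the Malcev completion of $\pi_1$ carries a group structure that in general is not recorded by the graded vector space $H_1$ alone and plays no role in the applications. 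Composing with the first step gives the statement for $\map_*(X,Y)$, naturality in $f$ being inherited from each construction.

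For the self-equivalence case one takes $\IL_Y=\IL_X$ and $\phi_f=\id$, so $\der(\IL_X)=\der_{\id}(\IL_X,\IL_X)$ is a genuine dg Lie algebra under the graded commutator $[\theta,\eta]=\theta\eta-(-1)^{|\theta||\eta|}\eta\theta$. Since $\aut_*(X)$ is a group-like topological monoid, $\pi_1(\aut_*(X),\id)$ is abelian, so the degree-$1$ comparison is now an isomorphism of vector spaces as asserted. That the Samelson product corresponds to the Lie bracket is a naturality check: the commutator map $\aut_*(X)\times\aut_*(X)\to\aut_*(X)$ defining the Samelson product is modelled by composition of self-maps, hence by composition of derivations, so the monoid commutator passes to the graded commutator of derivations. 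I expect the main obstacle to be the second step — verifying that $\der_{\phi_f}(\IL_X,\IL_Y)$ with its twisted differential genuinely models the mapping-space component, and in particular getting $\pi_0$ and $\pi_1$ right; the first step is a standard reduction, the third is the basic correspondence of rational homotopy theory, and the last is formal.
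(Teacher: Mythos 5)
The paper does not prove this statement at all: it is imported verbatim as a black-box citation of Lupton(--Smith) in the form given in Berglund--Madsen \cite[Theorem 3.6]{berg13}, so there is no internal proof to compare yours against. Judged on its own, your outline faithfully reproduces the architecture of the proofs that exist in the literature: reduce to a rational target using finiteness of $X$ and nilpotence of $Y$ (Hilton--Mislin--Roitberg), identify a Lie model of the component of $f$ with $\der_{\phi_f}(\IL_X,\IL_Y)$, and then invoke the dictionary $\pi_k\cong H_k$. The first and third steps are indeed standard, and your treatment of the $k=1$ subtlety (bijection of sets versus vector space isomorphism, restored for $\aut_*(X)$ because a group-like monoid has abelian $\pi_1$ at the identity) is exactly right.

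The genuine gap is the one you flag yourself: the second step is the entire content of the theorem, and you only name two methods by which it \emph{could} be carried out rather than carrying either out. In the obstruction-theoretic version, the comparison of the skeletal tower of $\map_*(X^{(m)},Y_\IQ)$ with the filtration of $\hom(V,\IL_Y)\cong\der_{\phi_f}(\IL(V),\IL_Y)$ by word-length in $V$ requires matching $k$-invariants, and also requires sorting out which Maurer--Cartan element (i.e.\ which component of the algebraic model) corresponds to the component of $f$; none of this is routine and it is where Lupton--Smith do all their work. A second, smaller soft spot is the Samelson product claim: ``composition of self-maps is modelled by composition of derivations'' is not literally true --- a $\phi$-derivation is a linearisation, not a model of a self-map --- and the correct statement is that under the natural isomorphism $\pi_k(\aut_*(X),\id)\otimes\IQ\cong H_k(\der(\IL_X))$ the commutator defining the Samelson product passes to the graded commutator of derivations; this needs an argument (it is part of what \cite{berg13} extracts from the naturality of the Lupton--Smith isomorphism), not just a formal remark. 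So your proposal is a correct roadmap but not yet a proof; for the purposes of this paper the appropriate resolution is simply the citation the author gives.
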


\begin{prop}[{Compare \cite[Proposition 5.5]{berg13}}]\label{der iso comp with action} There is a $\pi_0(\aut_\partial(N_I))$-equivariant isomorphism of graded Lie algebras
$$\pi_{*}^+(\aut_\partial (N_I))\otimes \IQ\cong\Der^+_\omega(\IL_I),$$ where the action on the right hand side is through the canonical action of $\Gamma_I$ on $H_I$

\end{prop}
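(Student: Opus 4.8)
The starting point is Proposition~\ref{der isomorphism of gLie}, which already gives an isomorphism of graded Lie algebras $\pi_*(\Omega B\aut_\partial(N_I)\langle 1\rangle)\otimes\IQ\cong\Der^+_\omega(\IL_I)$, and the fact that for a group-like monoid $\aut_\partial(N_I)$ there is a natural isomorphism $\pi_{*-1}(\aut_\partial(N_I))\otimes\IQ\cong\pi_*(B\aut_\partial(N_I))\otimes\IQ$ identifying $\pi^+_*(\aut_\partial(N_I))\otimes\IQ$ with $\pi_{*+1}(B\aut_\partial(N_I)\langle 1\rangle)\otimes\IQ=\pi_*(\Omega B\aut_\partial(N_I)\langle 1\rangle)\otimes\IQ$. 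So as graded Lie algebras the isomorphism is in hand; the whole content of the Proposition is the equivariance statement, i.e.\ that under this identification the conjugation action of $\pi_0(\aut_\partial(N_I))$ on $\pi^+_*(\aut_\partial(N_I))\otimes\IQ$ goes over to the action of $\Gamma_I$ on $\Der^+_\omega(\IL_I)$ induced by the canonical $\Gamma_I$-action on $H_I$ (equivalently on $s^{-1}\tilde H_*(N_I;\IQ)$, preserving $\omega_I$, via the $Sp^{n-2}$-module structure of Proposition~\ref{derivation as schurfunctor}).

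First I would make the source action concrete. By Proposition~\ref{hmcg} every class in $\pi_0(\aut_\partial(N_I))$ maps under $\tilde H_*$ into $\Gamma_I$, and by Lemma~\ref{gamma I action on universal cover} the kernel of $\tilde H_*$ acts trivially by conjugation up to homotopy, hence trivially on $\pi^+_*(\aut_\partial(N_I))\otimes\IQ$; so it suffices to treat an element of $\pi_0(\aut_\partial(N_I))$ together with its image $\gamma=\tilde H_*(f)\in\Gamma_I$. Now apply Theorem~\ref{lsiso} with $X=Y=N_I$ and the Quillen model $\IL_I=\IL(s^{-1}\tilde H_*(N_I;\IQ))$ with zero differential: this gives a natural isomorphism $\pi_*(\aut_*(N_I),\id)\otimes\IQ\cong H_*(\Der(\IL_I))=\Der(\IL_I)$ (the differential is zero) under which the Samelson product is the Lie bracket. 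Naturality is the key word: a self-equivalence $f$ of $N_I$ induces, via the functoriality in Theorem~\ref{lsiso} (which is stated for maps of simply connected CW-complexes, applied after passing to the simply connected cover / or directly on $\IL_I$ via the induced Quillen self-equivalence $\phi_f\colon\IL_I\to\IL_I$), a commuting square, and conjugation by $f$ on $\pi_*(\aut_*(N_I))\otimes\IQ$ corresponds precisely to $\theta\mapsto\phi_f\circ\theta\circ\phi_f^{-1}$ on $\Der(\IL_I)$. Since $\IL_I$ is free, the Quillen self-equivalence $\phi_f$ is determined by its effect on generators, and that effect is exactly $\tilde H_*(f)=\gamma$ acting linearly on $s^{-1}\tilde H_*(N_I;\IQ)$ — here one uses that $N_I$ is coformal/formal so the model is intrinsically formal and $\phi_f$ may be taken to be the linear extension of $\gamma$. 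Then the induced action on $\Der(\IL_I)$ is conjugation by $\IL(\gamma)$, which is exactly the $Sp^{n-2}$-module structure of Proposition~\ref{derivation as schurfunctor} restricted to $\Gamma_I\subset Sp^{n-2}(\IQ)$ (note $\gamma$ preserves $\mu$ and hence $\omega_I$, so $\gamma\in Sp^{n-2}$).

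Next I would pass from $\aut_*$ to $\aut_\partial$. Following Berglund–Madsen's Proposition~5.5, use the fibration $\aut_\partial(N_I)\to\aut_*(N_I)\to\map_*(\partial N_I,N_I)$ and the fact (Remark~\ref{ratsur}, cited earlier) that $\pi_1(\aut_*(N_I))\otimes\IQ\to\pi_1(\map_*(\partial N_I,N_I),\incl)\otimes\IQ$ is surjective, so that rationally $\pi^+_*(\aut_\partial(N_I))\otimes\IQ$ is the kernel of the evaluation-at-boundary map, which under Theorem~\ref{lsiso} is the evaluation map $\Der(\IL_I)\to\IL_I$ at $\omega_I$ (this is the content of modelling $\incl$ by $\gamma\mapsto\omega_I$); its kernel is $\Der_\omega(\IL_I)$, and positive truncation gives $\Der^+_\omega(\IL_I)$. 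All maps here are $\pi_0(\aut_\partial(N_I))$-equivariant by naturality, and the action restricts to the one already identified. Combining these steps yields the $\pi_0(\aut_\partial(N_I))$-equivariant isomorphism of graded Lie algebras asserted, with the action factoring through $\Gamma_I$ acting on $H_I$.

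The main obstacle, I expect, is the equivariance bookkeeping in the middle step: Theorem~\ref{lsiso} is stated as a \emph{natural} isomorphism, but to extract from it that \emph{conjugation} by a self-equivalence corresponds to \emph{conjugation} by the induced Quillen automorphism one must be careful about basepoints (the mapping-space components are based at $\id$ versus at $f$, and one compares via translation by $f$), and one must justify that the Quillen self-equivalence $\phi_f$ can be taken to be the genuine linear automorphism $\gamma$ of $s^{-1}\tilde H_*(N_I;\IQ)$ rather than merely something homotopic to it — this uses formality/coformality of $N_I$ and that $\IL_I$ has zero differential, so that there is no higher-order correction and the homotopy category computation is strict. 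Once this is set up, everything else is a matter of citing Proposition~\ref{derivation as schurfunctor} to recognise the resulting action as the $Sp^{n-2}$-module structure and of transcribing the Berglund–Madsen argument for $\aut_\partial$ versus $\aut_*$; none of that should present real difficulty beyond the connectivity hypotheses already used to make $V_I\hookrightarrow\prod(S^{p_i}\times S^{q_i})$ highly connected.
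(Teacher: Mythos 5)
Your proposal is correct and follows essentially the same route as the paper: identify the conjugation action on $\pi_*(\aut_*(N_I))\otimes\IQ$ with $\theta\mapsto\phi_f\circ\theta\circ\phi_f^{-1}$ via the naturality in Theorem~\ref{lsiso}, observe that $\IL(s^{-1}(f_*\otimes\IQ))$ is a Quillen model for $f$ so the action factors through $\tilde H_*(f)\in\Gamma_I$, and then restrict along the split short exact sequence coming from the fibration $\aut_\partial(N_I)\to\aut_*(N_I)\to\map_*(\partial N_I,N_I)$, whose boundary-evaluation map is the surjective evaluation at $\omega_I$ with kernel $\Der_\omega(\IL_I)$. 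The only cosmetic difference is that you reduce to $\Gamma_I$ up front via Lemma~\ref{gamma I action on universal cover}, whereas the paper gets the same conclusion for free from the fact that the model $\phi_f$ depends only on $f_*$; also note the surjectivity you attribute to Remark~\ref{ratsur} is actually established inside this very argument (Remark~\ref{ratsur} is a consequence of it, not an input).
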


\begin{proof}

We are going to study the long exact sequence of rational homotopy groups of the fibration
$$\aut_\partial(N_I) \rightarrow \aut_*(N_I)\rightarrow \map_*(\partial N_I,N_I).$$ Denote by $\varphi:\IL(\omega)\rightarrow \IL_I$ the inclusion of the sub graded Lie algebra of $\IL_I$ generated by $\omega_I.$  Using Theorem \ref{lsiso} we see that the map $$\pi_k(\aut_*(N_I))\otimes \IQ\rightarrow \pi_k(\map_*(\partial N_I,N_I))\otimes \IQ$$ is given by \begin{equation*}
\der(\IL_{I})_k  \xrightarrow{\varphi^*_k}\der_{\varphi}(\IL (\omega_I) ,\IL_{I})_k,
\end{equation*}where $\varphi^*_k$ is the restriction to $\IL(\omega_I).$ Note that $\der_{\varphi}(\IL (\omega_I) ,\IL_{I})\cong s^{(n-2)}\IL_I.$ Under this identifications the map $\varphi^*$ becomes the evaluation map which is surjective as discussed for the diagram (\ref{gnat}). Hence the long exact sequence of rational homotopy groups splits as \begin{equation*}
0\rightarrow \der_\omega(\IL_{I})_*\rightarrow \der(\IL_{I})_*  \xrightarrow{\varphi^*_k}\der_{\varphi}(\IL (\omega_I) ,\IL_{I})_*\rightarrow 0,
\end{equation*} where we use that $\der_\omega(\IL_{I})$ is the kernel of the evaluation map. The resulting isomorphism $$\pi_*^+(\aut_{\partial}({N_I}),\id_{N_I})\otimes \IQ\cong \der^+_{{\omega} }(\IL_{I})$$ is in fact an isomorphism of graded Lie algebras. Indeed, since the inclusion $\aut_\partial ({N_I})\rightarrow  \aut_*({N_I})$ is a map of topological monoids, the induced maps on rational homotopy group respect the Samelson product and $\der^+(\IL_{I})\cong\pi^+_*(\aut(N_I))\otimes \IQ$ is an isomorphism of Lie algebras. Hence we can calculate the Samelson product of $\pi^+_*(\aut_\partial(N_I))$ in $\der^+(\IL_{I}).$

Now let $f,g\in \aut_*({N_I})$. The action of $[f]\in\pi_0(\aut_*({N_I}))$ on $\pi_k(\aut_*({N_I}))$ is induced by pointwise conjugation $g\mapsto fgf^{-1},$ where $f^{-1}$ is some choice of homotopy inverse. Let $\phi_f$ be a Quillen model for $f$ and $\theta\in \der ({\IL_I})_k.$ The action of $[f]$ on $\der(\IL_{I})_k$ is given by 
\begin{equation*}
\theta\mapsto \phi_f\circ \theta\circ \phi_f^{-1},
\end{equation*} by the naturality of the identification $\pi_k(\aut_*({N_I}))\otimes \IQ \cong \der(\IL_{I})_k.$ 
For a homotopy self-equivalence $f$ consider the induced map $f_*\in \Aut (\tilde{H_*}(N_I))$. The map $\IL(s^{-1}(f_*\otimes \IQ))$ is in fact a Lie model for $f$, which shows that we can identify the conjugation action with the induced action of $\Aut (\tilde{H_*}(N_I))$ on $\der(\IL_I)_k$.\\
Using that $\der^+_\omega(\IL_{I})_k\rightarrow \der^+(\IL_{I})_k$ is injective, we calculate the conjugation action of $\pi_0(\aut_{\partial}({N_I}))$ on $\pi_k(\aut_{\partial}({N_I}),\id_{N_I})$ in terms of $\der_{{\omega} }(\IL_{I})_k$. Let $f$ be an element of $\aut_{\partial}({N_I})$, it is in particular also an element of $\aut_*({N_I})$ and we know that its homotopy class $[f]$ in $\pi_0 (\aut_*({N_I}))$ gives us an element in $\Gamma_I$. Considering $\theta\in \der_{{\omega}}(\IL_{I})_k$ as an element in $\der(\IL_{I})_k$ we see that $[f]$ acts by the action induced by $f_*\in \Gamma_I$ on $\tilde H_*(N_I).$
\end{proof}

\begin{rem}\label{ratsur} Observe that we discussed in the proof that the map $$\pi_1(\aut_* (N_I)) \otimes \IQ\rightarrow \pi_1(\map_*(\partial N_I,N_I))\otimes \IQ$$ is surjective hence the kernel of $\pi_0(\aut_\partial(N_I))\rightarrow \Gamma_I$ is finite.
\end{rem}

We need to identify the maps induced by the stabilization map on rational homotopy groups. Given an element of $\theta\in \der^+_{\omega}(\IL_I)$ we define an element $\theta'\in\der^+_{\omega}(\IL_{I'}),$ by letting $\theta'=\theta$ on generators $\iota_i,\kappa_i,$ where $i\in I$ and $\theta(\iota_{i'})=\theta(\kappa_{i'})=0.$ Using that $\IL_{I'}$ is free we get a derivation $\theta',$ which is indeed an element of $\der^+_{\omega}(\IL_{I'}),$ since $\omega_{I'}=\omega_{I}+(-1)^{|\iota_{i'}|}[\iota_{i'},\kappa_{i'}].$ We refer to this map again as stabilization map.

\begin{prop}\label{der iso compatible with stab maps}
The isomorphism $$\pi_*(\aut_\partial (N_I)\univ)\otimes \IQ\cong\Der^+_\omega(\IL_I)$$ is compatible with the stabilization maps. 
\end{prop}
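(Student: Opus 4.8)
The plan is to track the isomorphism from Proposition~\ref{der iso comp with action} through its construction and verify that each ingredient is natural with respect to the inclusion $N_I \hookrightarrow N_{I'}$, equivalently the stabilization map $\sigma$. Recall that the isomorphism $\pi_*^+(\aut_\partial(N_I))\otimes\IQ \cong \Der^+_\omega(\IL_I)$ was obtained by splitting the long exact sequence of the fibration $\aut_\partial(N_I)\to\aut_*(N_I)\to\map_*(\partial N_I, N_I)$ and identifying the terms via Theorem~\ref{lsiso}. So the first step is to record that the stabilization map fits into a map of fibrations
\begin{displaymath}
\xymatrix{
\aut_\partial(N_I) \ar[r]\ar[d] & \aut_*(N_I) \ar[r]\ar[d] & \map_*(\partial N_I, N_I)\ar[d] \\
\aut_\partial(N_{I'}) \ar[r] & \aut_*(N_{I'}) \ar[r] & \map_*(\partial N_{I'}, N_{I'})}
\end{displaymath}
where the middle and right vertical maps are ``extend by the identity on $V^{p,q}$''. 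This gives a ladder of long exact sequences of rational homotopy groups, and since on both rows the evaluation map $\varphi^*$ is surjective (as in diagram~(\ref{gnat})), the splittings are compatible.

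Next I would identify the algebraic model of the middle vertical map. Choosing the Quillen model $\IL_{I'} = \IL(s^{-1}\tilde H_*(N_{I'};\IQ))$ and noting $\tilde H_*(N_{I'};\IQ) = \tilde H_*(N_I;\IQ)\oplus \IQ a_{i'}\oplus \IQ b_{i'}$, the extension-by-identity map $N_I\hookrightarrow N_{I'}$ is modeled by the inclusion of free Lie algebras $\IL_I\hookrightarrow \IL_{I'}$ sending generators to generators. Under the identification $\pi_k(\aut_*(-))\otimes\IQ\cong H_k(\Der(\IL_{(-)}))$ of Theorem~\ref{lsiso}, the induced map on homotopy groups becomes exactly the map $\Der(\IL_I)\to\Der(\IL_{I'})$ sending $\theta$ to the derivation $\theta'$ that agrees with $\theta$ on the old generators and vanishes on $\iota_{i'}, \kappa_{i'}$ --- this is precisely the stabilization map on derivations defined just before the statement, and it is well-defined because $\IL_{I'}$ is free. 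The key point here is that $\IL(s^{-1} f_*\otimes\IQ)$ is a Lie model for a map $f$ between wedges of spheres, so the functoriality of the model under the inclusion $\tilde H_*(N_I)\hookrightarrow\tilde H_*(N_{I'})$ gives exactly this formula.

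Then I would check that this map on $\Der$ restricts correctly to the $\omega$-annihilating sub-Lie-algebras and to the positive truncations. Since $\omega_{I'} = \omega_I + (-1)^{|\iota_{i'}|}[\iota_{i'},\kappa_{i'}]$ and $\theta'$ kills $\iota_{i'},\kappa_{i'}$ while sending $\omega_I$ into $\IL_I\subset\IL_{I'}$, we get $ev_{\omega_{I'}}(\theta') = ev_{\omega_I}(\theta)$, so $\theta\in\Der_\omega(\IL_I)$ implies $\theta'\in\Der_\omega(\IL_{I'})$; truncation is obviously respected. Finally, assembling: the isomorphism of Proposition~\ref{der iso comp with action} is the composite of (a) the isomorphism $\pi_*^+(\aut_\partial(N_I))\otimes\IQ\cong\Der^+_\omega(\IL_I)$ from the split fibration sequence, and (b) the identification of that with $\pi_*(\Omega B\aut_\partial(N_I)\univ)\otimes\IQ$ via the group-like structure (Proposition~\ref{der isomorphism of gLie}); the latter is natural in the monoid map induced by $\sigma$. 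Chasing the ladder of split short exact sequences then yields the commuting square expressing compatibility with stabilization.

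**The main obstacle** I anticipate is purely bookkeeping rather than conceptual: one must be careful that the Quillen model of the \emph{map} $N_I\hookrightarrow N_{I'}$ and the model of the boundary inclusions $\IL(\gamma)\to\IL_I$ versus $\IL(\gamma)\to\IL_{I'}$ are chosen compatibly, so that the splittings of the two rows in the long exact sequence ladder are genuinely induced by a single map of short exact sequences; without this the vertical maps on $\Der^+_\omega$ and on $s^{(n-2)}\IL$ might only agree up to a coherence that one has to exhibit explicitly. This is exactly the kind of naturality statement that the generality of Lupton's theorem (Theorem~\ref{lsiso}) is designed to provide, so in practice it reduces to citing that naturality and the freeness of $\IL_{I'}$.
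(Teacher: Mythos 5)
Your overall strategy (verify naturality of each ingredient of Proposition \ref{der iso comp with action}) is reasonable, and the purely algebraic parts are right: the stabilization on derivations is extension by zero on the new generators, and since $\omega_{I'}=\omega_I+(-1)^{|\iota_{i'}|}[\iota_{i'},\kappa_{i'}]$ and $\theta'$ kills $\iota_{i'},\kappa_{i'}$, it restricts to $\Der^+_\omega$. But the first step of your argument contains a genuine gap: the ladder of fibrations you draw does not exist. The middle vertical map $\aut_*(N_I)\to\aut_*(N_{I'})$, ``extend by the identity on $V^{p,q}$,'' is not defined --- a based homotopy self-equivalence of $N_I$ need not fix $\partial N_I$ (even up to homotopy), so it cannot be glued with $\id_{V^{p,q}}$ to a self-map of $N_{I'}=N_I\cup_{\partial_1}V^{p,q}$. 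Likewise there is no natural map $\map_*(\partial N_I,N_I)\to\map_*(\partial N_{I'},N_{I'})$; note that $in'$ corresponds to $\omega_{I'}$, which is not the image of $\omega_I$ under $\IL_I\hookrightarrow\IL_{I'}$. So the ``compatible splittings of the two rows'' you invoke have no map of rows to be compatible along, and this is more than the bookkeeping issue you flag at the end.

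The standard repair (and the route taken in \cite[Proposition 7.7]{berg13}, to which the paper defers) is to compare both sides inside a common mapping space rather than through a map of fibrations: the square with $\aut_\partial(N_I)\to\aut_\partial(N_{I'})$ on top and $\map_*(N_I,N_{I'})$ at the bottom commutes, where the left map is post-composition with the inclusion $\iota:N_I\hookrightarrow N_{I'}$ and the right map is pre-composition with $\iota$. Theorem \ref{lsiso} (the first, relative, part, which is natural in both variables) identifies the bottom corner with $H_*(\der_{\IL(\iota)}(\IL_I,\IL_{I'}))$; the left composite becomes $\theta\mapsto\IL(\iota)\circ\theta$ and the extend-by-zero derivation $\theta'$ restricts to exactly this. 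To conclude that $\sigma_*$ \emph{is} extension by zero you then still need that the restriction $\Der^+_\omega(\IL_{I'})\to\der_{\IL(\iota)}(\IL_I,\IL_{I'})$ detects the map (injectivity on the relevant homology), which is the actual content of the compatibility statement and is absent from your proposal.
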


\begin{proof} This works exactly as in \cite[Proposition 7.7]{berg13}.
\end{proof}

Ultimately we describe the rational homology $H_*(B\aut_\partial(N_I)\univ),\IQ)$ as $\pi_I$-modules. The link between a dg Lie model and the rational homology of a space is given by the Chevalley-Eilenberg homology. The Chevalley-Eilenberg complex of a dg Lie algebra $L$ with differential $d_L$ is the chain complex $C_*^{CE}(L) = \Lambda_* sL$ with differential $\delta^{CE}=\delta_0^{CE} + \delta_1^{CE}$, where  $s$ denotes the suspension and $\Lambda_*$ the free graded commutative algebra. In the simplest cases the differentials are given by\begin{equation*}
\delta_0^{CE}(sx) =-s d_L x
\end{equation*}\begin{equation*}
\delta_1^{CE}(sx_1 \wedge sx_2) =(-1)^{|x_1|}s[x_1,x_2],
\end{equation*}
where $x,x_1,x_2\in L$.

Quillen showed that for a dg Lie model $L_X$ of a space $X$ that the Chevalley-Eilenberg homology gives the rational homology groups of $X,$ i.e. that $H^{CE}_*(L_X)\cong H_*(X;\IQ).$ 

Grade the Chevalley-Eilenberg chains by word length, i.e. let $(\Lambda^{p}(L))_q$ be the elements of word length $p$ and degree $q$. Denote by $H_{p,q}^{CE} (L)$ the homology of the chain complex $$...\to (\Lambda^{p+1}(L))_q\xrightarrow{\delta_1}(\Lambda^{p}(L))_q\xrightarrow{\delta_1}(\Lambda^{p-1}(L))_q\to ...$$
The Quillen Spectral Sequence is the spectral sequence coming from the filtration by word length. In case that the dg Lie algebra $L_X$ is a model for a space $X,$ we can identify the $E^2$-page with $$E^2(L)_{p,q} = H_{p,q}^{CE} (L_X)\cong H_{p,q}^{CE} (\pi_*(\Omega X)\otimes \IQ)\Rightarrow H_*^{CE}(L_X)\cong H_*(X;\IQ).$$  

The Quillen spectral sequence collapses on the $E^2$-page for coformal spaces, hence we get isomorphisms $$H_r(B\aut_\partial(N_I)\univ),\IQ)\cong \bigoplus_{p+q=r} H_{p,q}^{CE} (\pi_*(\Omega B\aut_\partial(N_I)\univ)\otimes \IQ).$$ The Quillen spectral sequence is in fact natural with respect to unbased maps of simply connected spaces (\cite[Proposition 2.1.]{berg13}). Since $\pi_I$ is rationally perfect (see Lemma \ref{H1 triv pi0}), we do not have any extension problems, hence the isomorphism above is in fact an isomorphism of $\pi_I$-modules (see \cite[Proposition 2.3]{berg13}). 

\begin{prop}\label{iso Hce and H as piI-modules}
There are isomorphisms of $\pi_I$-modules $$H^{CE}_r(\der_\omega(\IL_I))\cong H_r(B\aut_\partial(N_I)\univ;\IQ)$$ compatible with the stabilization maps
\end{prop}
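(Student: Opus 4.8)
The plan is to assemble this isomorphism from the pieces already laid out in this section. First I would recall the chain of identifications: by Proposition \ref{der isomorphism of gLie} and Proposition \ref{der iso comp with action} there is a $\pi_0(\aut_\partial(N_I))$-equivariant isomorphism of graded Lie algebras $\pi_*(\Omega B\aut_\partial(N_I)\univ)\otimes\IQ\cong\Der^+_\omega(\IL_I)$, where the action on the target is the one induced from the canonical $\Gamma_I$-action on $H_I$ (and the kernel of $\tilde H_*$ acts trivially on both sides, by Lemma \ref{gamma I action on universal cover}, so the $\pi_I$-action factors through $\Gamma_I$). Then I would invoke Quillen's theorem: the Chevalley-Eilenberg homology of a dg Lie model computes rational homology, so $H^{CE}_*(\Der^+_\omega(\IL_I))\cong H_*(B\aut_\partial(N_I)\univ;\IQ)$ as graded vector spaces. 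The point of the proposition is to upgrade this to an isomorphism of $\pi_I$-modules compatible with stabilization.

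Next I would spell out why the module structures match. The Quillen spectral sequence is natural with respect to unbased maps of simply connected spaces (\cite[Proposition 2.1]{berg13}); applying this to the deck transformations of $B\aut_\partial(N_I)\univ$ gives a $\pi_I$-action on each page. Since $B\aut_\partial(N_I)\univ$ is coformal (Proposition \ref{der isomorphism of gLie}), the Quillen spectral sequence collapses at $E^2$, so $H_r(B\aut_\partial(N_I)\univ;\IQ)\cong\bigoplus_{p+q=r}H^{CE}_{p,q}(\pi_*(\Omega B\aut_\partial(N_I)\univ)\otimes\IQ)$, and because $\pi_I$ is rationally perfect (Lemma \ref{H1 triv pi0}) there are no extension problems in reassembling the associated graded into the homology of the total space, so this is an isomorphism of $\pi_I$-modules (\cite[Proposition 2.3]{berg13}). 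Now I transport the Lie algebra isomorphism of Proposition \ref{der iso comp with action} through the Chevalley-Eilenberg functor: since $C^{CE}_*$ is functorial in maps of dg Lie algebras, an isomorphism of graded Lie algebras equivariant for a group action induces an isomorphism on $H^{CE}_*$ equivariant for that action. Combining these gives the stated $\pi_I$-equivariant isomorphism $H^{CE}_r(\Der_\omega(\IL_I))\cong H_r(B\aut_\partial(N_I)\univ;\IQ)$ (note $\Der^+_\omega$ and $\Der_\omega$ have the same Chevalley-Eilenberg homology in positive word length, or one simply works with the positive truncation throughout since the model is simply connected).

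Finally I would check compatibility with the stabilization maps. The stabilization map on spaces induces the stabilization map on $\Der^+_\omega$ described just before Proposition \ref{der iso compatible with stab maps} (extend a derivation by zero on the new generators $\iota_{i'},\kappa_{i'}$), and Proposition \ref{der iso compatible with stab maps} says the isomorphism $\pi_*(\aut_\partial(N_I)\univ)\otimes\IQ\cong\Der^+_\omega(\IL_I)$ intertwines these. Since the Quillen spectral sequence and the Chevalley-Eilenberg construction are natural, applying $C^{CE}_*$ and passing to homology preserves this compatibility, so the square relating the two stabilization maps commutes.

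I expect the main obstacle to be purely bookkeeping rather than conceptual: carefully tracking that the \emph{same} $\pi_I$-action appears on all the objects in play — the deck transformation action on $H_*(B\aut_\partial(N_I)\univ;\IQ)$, the conjugation action on $\pi_*(\aut_\partial(N_I)\univ)\otimes\IQ$, and the $\Gamma_I$-action on $\Der^+_\omega(\IL_I)$ coming from the $Sp^D$-module structure — and checking that the collapse of the Quillen spectral sequence together with rational perfectness really does eliminate extension problems equivariantly. All the genuine work has been done in \cite{berg13} and in the earlier propositions of this section, so the proof is essentially a citation of \cite[Section 2 and Section 5--7]{berg13} adapted to the present notation, exactly as the surrounding text indicates.
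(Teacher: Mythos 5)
Your proposal is correct and follows essentially the same route as the paper: the paper's proof likewise combines the $\pi_I$-module isomorphism $H_r(B\aut_\partial(N_I)\univ;\IQ)\cong\bigoplus_{p+q=r}H^{CE}_{p,q}(\pi_*(\Omega B\aut_\partial(N_I)\univ)\otimes\IQ)$ (obtained from coformality, the collapse of the Quillen spectral sequence, and rational perfectness of $\pi_I$) with the equivariant identification of Proposition \ref{der iso comp with action}, and cites Proposition \ref{der iso compatible with stab maps} for compatibility with stabilization. Your parenthetical care about $\Der^+_\omega$ versus $\Der_\omega$ is a point the paper passes over silently, but it does not change the argument.
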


\begin{proof}We use the isomorphism of $\pi_I$-modules
$$H_r(B\aut_\partial(N_I)\univ),\IQ)\cong \bigoplus_{p+q=r} H_{p,q}^{CE} (\pi_*(\Omega B\aut_\partial(N_I)\univ)\otimes \IQ)$$ constructed above. By Proposition \ref{der iso comp with action}, we identify $$H_{p,q}^{CE} (\pi_*(\Omega B\aut_\partial(N_I)\univ)\otimes \IQ)\cong H_{p,q}^{CE} (\der_\omega (\IL_I))$$ as $\pi_I$-modules. This in turn gives the isomorphism in the claim.

The compatibility with the stabilization maps follows from Proposition \ref{der iso compatible with stab maps}.
\end{proof}

\section{Homological Stability}

\subsection{An algebraic homological stability result}

Recall the graded hyperbolic modules $H_I\cong \tilde H_*(N_I)$ from Section \ref{automorphisms of hyperbolic modules}, which have the groups $\Gamma_I$ as their automorphism groups. Recall that we denoted by $\lambda_I$ the $\Gamma_I$-module $H_I$ with standard action which we considered as an object $$((H_I)_1,...,(H_I)_{n-1})\in \Mod(\IZ)^{n-1}.$$
Moreover recall that we defined
\begin{displaymath}
	\begin{aligned}
		&g_p=\begin{cases}\operatorname{rank} ((H_I)_p) &\text{if } 2p<n \\  \operatorname{rank} ((H_I)_p)/2 &\text{if } 2p=n.\end{cases}\\
		&H_{I'}=H_I\oplus \IZ[p]\oplus \hom(\IZ[p],\IZ[n]) &&\text{for fixed $p<\lfloor n/2 \rfloor$}  \\
		&I_{p,n-p}:H_I\rightarrow H_{I'} &&\text{the upper inclusion}\\
		&\mathfrak{g}_I=\Derpw(\IL_I)=\Derpw(\IL(s^{-1}H_I\otimes \IQ)).
	\end{aligned}
\end{displaymath}

We begin by showing:

\begin{prop}\label{polynomial functor degree}
Let $n>3$. For all $l\geq 0$ there are polynomial functors $$\mathscr{C}_l:\Mod(\IZ)^{n-1}\rightarrow \Vect(\IQ)$$ of degree $\leq l/2$ and isomorphisms of $\Gamma_I$-modules
$$\mathscr{C}_l(\lambda_I)\cong C_l^{CE}(\mathfrak{g}_I)$$ compatible with the maps induced by inclusions.
\end{prop}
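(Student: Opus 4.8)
The plan is to build the functors $\mathscr{C}_l$ by assembling three layers: the Schur-functor description of $\der_\omega(\IL(-))$ from Proposition~\ref{derivation as schurfunctor}, the Chevalley--Eilenberg construction $C_*^{CE}(-)=\Lambda_*(s-)$, and the mechanism relating $\Gamma_I$-actions on $H_I$ to functors evaluated at $\lambda_I$. First I would recall that by Proposition~\ref{der isomorphism of gLie} and Proposition~\ref{der iso comp with action} the $\Gamma_I$-module $\mathfrak{g}_I=\Derpw(\IL_I)$ is the value at $s^{-1}H_I\otimes\IQ$ of the functor $\Der^+_\omega(\IL(-))\cong\Lie((-))^+$ on $Sp^{(n-2)}$, and that by the Remark after Proposition~\ref{derivation as schurfunctor} this is a Schur functor in the underlying graded vector space. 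The key observation is that although $\der_\omega$ a priori only makes sense as an $Sp^D$-module (it uses the inner product), the diagram (\ref{gnat}) identifies it with the functor $\g$ together with a degree shift, and $\g$ is defined on the category of graded vector spaces with no reference to the inner product; so $\mathfrak{g}_I$ is the value of an honest (Schur, hence polynomial) graded-vector-space-valued functor applied to $s^{-1}H_I\otimes\IQ$.

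Next I would take the Chevalley--Eilenberg chains. Since $C_l^{CE}(L)=\Lambda^l(sL)$ is a Schur functor (the $l$-th exterior/graded-symmetric power, depending on parity) applied to $sL$, and composition of Schur functors is again a Schur functor by (\ref{schurcomp}), the composite $L\mapsto \Lambda^l(s\,\Der^+_\omega(\IL(L)))$ is a Schur multifunctor in the underlying graded vector space $L=s^{-1}H_I\otimes\IQ$. Now I split the graded vector space $\tilde H_*(N_I;\IQ)$ according to degree: it is a direct sum of pieces concentrated in degrees $p_1,\dots$ (with the middle degree $n/2$ receiving a doubled contribution), so evaluating a Schur multifunctor on this graded object gives, degree-block by degree-block, a polynomial multifunctor in the separate $\IZ$-modules $(H_I)_1,\dots,(H_I)_{n-1}$ after tensoring with $\IQ$. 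This is exactly the bookkeeping that the Schur-multifunctor formalism of Section~\ref{Polynomial functors and Schur multifunctors} was set up to handle: a Schur functor of a direct sum $\bigoplus V_i$ unpacks into a Schur multifunctor in the $V_i$, and the $\Sigma$-equivariance is tracked by the multiindex $\eta$. Defining $\mathscr{C}_l$ to be this resulting multifunctor, precomposed with $\Mod(\IZ)^{n-1}\to \Vect(\IQ)^{n-1}$ and with the grading conventions encoded (so that the $n/2$-summand is fed in with the appropriate doubling), gives the isomorphism $\mathscr{C}_l(\lambda_I)\cong C_l^{CE}(\mathfrak{g}_I)$ of $\Gamma_I$-modules by construction, and compatibility with the inclusion-induced maps is immediate because all the identifications above were natural in $Sp^{(n-2)}$ (Proposition~\ref{der iso compatible with stab maps}, Proposition~\ref{iso Hce and H as piI-modules}).

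Finally, the degree count. An element of bracket length $r$ in $\Lie((-))$ is homogeneous of "tensor-degree" $r$ in $V$, i.e. $\Lie((r))\otimes_{\Sigma_r}V^{\otimes r}$, so $\Der^+_\omega(\IL(-))$ contributes degree exactly equal to bracket length. Taking $\Lambda^l$ multiplies the count: a word-length-$l$ product of derivations of bracket lengths $r_1,\dots,r_l$ has total tensor-degree $\sum r_j$ in $V=s^{-1}H_I\otimes\IQ$. The crucial point is that each factor has bracket length $\geq 2$ (derivations of bracket length $1$ are excluded from $\der_\omega$ because $\g$ is the kernel of the bracket map onto $\IL^{\geq 2}$, as in (\ref{gnat})), so the total tensor-degree is $\geq 2l$; equivalently, $C_l^{CE}$ sees only tensor powers $V^{\otimes m}$ with $m\geq 2l$... but that is a lower bound, and for polynomiality I need an \emph{upper} bound. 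Here I must use the connectivity/degree constraint: $V=s^{-1}\tilde H_*(N_I;\IQ)$ is concentrated in degrees $p_i-1\geq 2$ and $q_i-1$, with $n-2 = (p_i-1)+(q_i-1)$, and the Chevalley--Eilenberg chains of the positively-truncated derivation Lie algebra that can contribute to a fixed total homological degree are bounded; more precisely, since each generator of $\IL_I$ sits in degree $\geq 2$ and $\Der^+$ truncates below degree $1$, a word of length $l$ in $sL$ forces its tensor-degree in $V$ to be controlled, yielding $m\leq 2l$ up to the shift. Combining $m\leq$ (something like) $2l$ with the Schur-functor polynomiality criterion ($\mathscr{N}(\eta)$ trivial for $|\eta|>k$) gives that $\mathscr{C}_l$ is polynomial of degree $\leq l/2$... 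I expect the main obstacle to be precisely pinning down this upper bound on the tensor-degree — i.e., proving that $C_l^{CE}(\mathfrak{g}_I)$ involves only $V^{\otimes m}$ with $m\leq l$ (so that after the doubling/grading conventions one lands at degree $\leq l/2$), which requires a careful degree-counting argument using the truncation $\Der^+$ together with $p_i,q_i\geq 3$ and is the one place where the hypothesis $n>3$ and the connectivity bounds genuinely enter.
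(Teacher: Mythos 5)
Your construction of $\mathscr{C}_l$ is the paper's: take the Schur functor $\Lie((-))\cong\der_\omega(\IL(-))$ of Proposition \ref{derivation as schurfunctor}, precompose with the degree\-/splitting inclusion $\prod_{i=0}^{n-2}\Vect(\IQ)\to \Vect_*(\IQ)$ and with $-\otimes\IQ$ so as to obtain a Schur multifunctor on $\Mod(\IZ)^{n-1}$, truncate to positive internal degrees, and postcompose with the free graded commutative algebra functor $\Lambda$ via the composition formula (\ref{schurcomp}); $\Gamma_I$-equivariance and compatibility with the stabilization maps come from the lift $Q^n_+(\IZ,\Lambda)\to Sp^{(n-2)}$, $M\mapsto s^{-1}(M\otimes\IQ)$. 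Up to this point you match the paper.

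The gap is the degree count, which is the actual content of the proposition, and your sketch of it would not close. First, the subscript $l$ in $C^{CE}_l(\g_I)$ is the total homological degree in $\Lambda_*(s\g_I)$ (word length \emph{plus} the internal degrees of the derivations), not the word length: your reading ``$C^{CE}_l=\Lambda^l(s-)$'' inverts the two gradings, leads you to the useless lower bound ``tensor-degree $\geq 2l$'', and your hoped-for remedy (``tensor-degree $\leq l$, then doubling gives $l/2$'') has no mechanism behind it. The paper's argument is a lower bound on the \emph{homological} degree in terms of the multiindex $\mu=(m_i)$: a summand of word length $r$ built from pieces $\mu_1,\dots,\mu_r$ sits in degree $r+\sum_{s=1}^{r}\bigl(\sum_i i\,m_{i,s}-n+2\bigr)=\sum_i i\,m_i-r(n-3)$, and positivity of each factor of $\Der^+_\omega$ forces $\sum_i i\,m_{i,s}\geq n-1$, hence $r\leq\frac{1}{n-1}\sum_i i\,m_i$; since $n>3$ makes $3-n$ negative, one gets $l\geq\frac{2}{n-1}\sum_i i\,m_i$, and it is this inequality (combined with the lower bound on the degrees $i$ that can carry nonzero $m_i$) that converts into an upper bound on $|\mu|$ linear in $l$, i.e.\ into polynomiality of $\mathscr{C}_l$. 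This is exactly the place where $n>3$ and the connectivity hypotheses enter, and nothing in your write-up produces such an inequality --- you flag this yourself. As it stands the proposal establishes the functoriality and equivariance but not the degree bound.
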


\begin{proof}

In Proposition \ref{derivation as schurfunctor} we described the derivations $\der_\omega(\IL_I)$ as a Schur functor $$\Lie((-)):Sp^{n-2}\to gLie,\; V\mapsto s^{-n+2}\bigoplus_{k\geq 2} \Lie((k))\otimes_{\Sigma_k}V^{\otimes k},$$ that extended to the category of graded vector spaces $Vect_*(\IQ)$. Consider the inclusion $$\mathcal{I}:\prod_{i=0}^{n-2} Vect(\IQ)\rightarrow Vect_*(\IQ), \; (V_i)_{i=0}^{n-2}\mapsto \bigoplus_{i=0}^{n-2} V_i[i].$$ The composition $\Lie((-))\circ \mathcal{I}$ is a Schur multifunctor, which we are denoting by $ \widetilde {\mathscr{U}},$ with $$\widetilde {\mathscr{U}}(\mu)=s^{[1m_1+2m_2+...+(n-2)m_{n-2}-n+2]}\Lie((|\mu|)),$$ where $\mu=(m_0,m_1,...,m_{n-2}),$ given the $\Sigma_\mu$-module structure induced by the inclusion $\Sigma_\mu\subset \Sigma_{|\mu|}.$ Thus the positive degree derivations are given by the Schur multifunctor $$\mathscr{U}:\prod_{i=0}^{n-2} Vect(\IQ)\to gLie,$$ where $\mathscr{U}(\mu)=\widetilde{\mathscr{U}}(\mu),$ when $$1m_1+2m_2+...+(n-2)m_{n-2}-n+2\geq 1\Leftrightarrow \sum_{i=1}^{n-2} m_i\frac{i}{n-1}\geq 1,$$ and $0$ otherwise. The Chevalley-Eilenberg chains are given by the Schur functor $\Lambda$ with $\Lambda(r)$ the trivial $\Sigma_r$-module concentrated in degree $r$. 
The composition $\widetilde{\mathscr{C}}=\Lambda\circ\mathscr{U}$ is now given by (using (\ref{schurcomp})) $$\widetilde{\mathscr{C}}_r(\mu)\cong  \bigoplus_r  \Lambda(r) \otimes_{\Sigma_r} \bigoplus\operatorname{Ind}_{\Sigma_{\mu_1}\times...\times \Sigma_{\mu_r}}^{\Sigma_\mu}\mathscr{U}(\mu_1)\otimes ... \otimes \mathscr{U}(\mu_r),   $$where the second sum runs over all $r$-tuples $(\mu_1,...,\mu_r),$ such that $\Sigma_{i=1}^r \mu_i=\mu$. For a fixed r we get that the summand corresponding to $(\mu_1,...,\mu_r),$ where $\mu_s=(m_{1,s},...m_{r,s})$, is only non-zero, if $\sum_{i=1}^{n-2} (m_{i,s} \frac{i}{n-1}) \geq 1 $ for all $s=1,...,r$. That implies that
\begin{equation}\label{r}
\sum_{i=1}^{n-2} m_i \frac{i}{n-1}= \sum_{s=1}^r (\sum_{i=1}^{n-2} m_{i,s} \frac{i}{n-1})\geq r.
\end{equation}
If the summand is non-zero it is of degree
\begin{equation*}
\begin{aligned}
r+ \sum_{s=1}^r \left(\left(\sum_{i=1}^{n-2} m_{i,s} i\right)-n+2\right) &=r+ \left(\sum_{i=1}^{n-2} m_ii\right) -nr + 2r\\
&=\left(\sum_{i=1}^{n-2} m_ii\right) + r(3-n)\\
&\geq\left(\sum_{i=1}^{n-2} m_ii\right)+  \left(\sum_{i=1}^{n-2} m_i \frac{i}{n-1}\right)(3-n) \\
&=\frac{2}{n-1}\left(\sum_{i=1}^{n-2} m_ii\right),
\end{aligned}
\end{equation*} where we used that $3-n$ is negative and (\ref{r}). This implies now that the Chevalley-Eilenberg $l$-chains are a Schur multifunctor $\widetilde{\mathscr{C}}_l,$ where $\widetilde{\mathscr{C}}_l(\mu)$ vanishes for 
$$l<\frac{2}{n-1}\left(\sum_{i=1}^{n-2} m_ii\right)\leq 2\sum_{i=1}^{n-2}m_i=2|\mu|.$$
Hence it is of degree $\leq l/2.$
The functor $\mathscr{C}_l$ in the statement is given by the pre-composition with the additive functor $$-\otimes \IQ: \Mod(\IZ)^{n-1}\to \prod_{i=0}^{n-2} Vect(\IQ).$$ The compatibility with the action follows from the fact the functor lifts to $$Q^n_+(\IZ,\Lambda)\rightarrow Sp^{(n-2)}, \; M\mapsto s^{-1}(M\otimes \IQ).$$

\end{proof}

As an immediate consequence of Proposition \ref{polynomial functor degree} and Proposition \ref{general algebraic homological stability result} we get:

\begin{cor}\label{homstab cor}
The stabilization map
$$H_k(\Gamma_I, C^{CE}_l (\mathfrak{g}_{I})\rightarrow H_k(\Gamma_{I'},C^{CE}_l (\mathfrak{g}_{I'}))$$
is an isomorphism for $g_p> 2k+2l+2$ when $2p\neq n$ and $g_p> 2k+2l+4$ if $2p=n$ and an epimorphism for $g_p\geq 2k+2l+2$ respectively $g_p\geq 2k+2l+4.$
\end{cor}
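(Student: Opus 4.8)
The plan is to deduce the statement by feeding the polynomial functor produced in Proposition~\ref{polynomial functor degree} into the algebraic stability result of Proposition~\ref{general algebraic homological stability result}. First I would invoke Proposition~\ref{polynomial functor degree} (its hypothesis $n>3$ is automatic, since the standing assumptions $3\leq p_i\leq q_i<2p_i-1$ force $n\geq 6$) to obtain, for each fixed $l\geq 0$, a polynomial functor $\mathscr{C}_l\colon \Mod(\IZ)^{n-1}\to \Vect(\IQ)$ of degree $\leq l/2$ together with isomorphisms of $\Gamma_I$-modules $\mathscr{C}_l(\lambda_I)\cong C^{CE}_l(\mathfrak{g}_I)$ compatible with the maps induced by the inclusions $I_{p,n-p}\colon H_I\to H_{I'}$. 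This compatibility is the one piece of bookkeeping that matters: it identifies, $\Gamma_I$-equivariantly, the stabilization map $C^{CE}_l(\mathfrak{g}_I)\to C^{CE}_l(\mathfrak{g}_{I'})$ with the map obtained by applying $\mathscr{C}_l$ to $I_{p,n-p}$, which is exactly the coefficient part of the stabilization map $\sigma_{p,n-p}$ of Proposition~\ref{general algebraic homological stability result} for $T=\mathscr{C}_l$.

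Next I would apply Proposition~\ref{general algebraic homological stability result} with $T=\mathscr{C}_l$. As $\mathscr{C}_l$ is polynomial of degree $\leq l/2$, that proposition yields that
$$H_k(\Gamma_I, C^{CE}_l(\mathfrak{g}_I))\longrightarrow H_k(\Gamma_{I'}, C^{CE}_l(\mathfrak{g}_{I'}))$$
is an isomorphism for $g_p> 2k+l/2+2$ when $2p\neq n$ and for $g_p> 2k+l/2+4$ when $2p=n$, and an epimorphism in the corresponding non-strict ranges. Since $l/2\leq 2l$ for every $l\geq 0$, one has $2k+2l+2\geq 2k+l/2+2$ and $2k+2l+4\geq 2k+l/2+4$, so the ranges $g_p>2k+2l+2$ (resp.\ $g_p>2k+2l+4$), and likewise the non-strict ones, are contained in the ranges just obtained. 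This gives the assertion.

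I do not expect a genuine obstacle: the corollary is a formal consequence of the two preceding propositions, and the only thing to watch is the naturality in the stabilization direction of the identification $\mathscr{C}_l(\lambda_I)\cong C^{CE}_l(\mathfrak{g}_I)$, which is already part of Proposition~\ref{polynomial functor degree}. One could, if desired, record the slightly sharper ranges $g_p>2k+l/2+2$ and $g_p>2k+l/2+4$; the coarser form stated here is the one that will be convenient in the sequel.
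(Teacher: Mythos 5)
Your proposal is correct and is essentially the paper's own argument: the corollary is stated as an immediate consequence of Proposition~\ref{polynomial functor degree} (giving $\mathscr{C}_l$ of degree $\leq l/2$ with the stabilization-compatible identification $\mathscr{C}_l(\lambda_I)\cong C^{CE}_l(\mathfrak{g}_I)$) fed into Proposition~\ref{general algebraic homological stability result}. Your observation that this actually yields the sharper range $g_p>2k+l/2+2$ (resp.\ $+4$), of which the stated range is a coarsening, is accurate.
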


Before we proof the main proposition of this section, i.e. deduce homological stability for the Chevalley-Eilenberg homology from the stability for the chains, we need the following observation about the Chevalley-Eilenberg chains.
\begin{lem}\label{hyperhom lemma}
There exists a chain homotopy equivalences $C^{CE}_* (\mathfrak{g}_I)\xrightarrow{\simeq}H^{CE}_* (\mathfrak{g}_I)$ sending cycles $z\mapsto[z]$ such that
\begin{equation*}
\xymatrix{
C^{CE}_* (\mathfrak{g}_I) \ar[rr]^{\sigma_*}\ar[d]_{\simeq}  && C^{CE}_* (\mathfrak{g}_{I'}) \ar[d]_{\simeq} \\
H^{CE}_* (\mathfrak{g}_I) \ar[rr]^{\sigma_*} && H^{CE}_* (\mathfrak{g}_{I'})}
\end{equation*}
commutes up to chain homotopy of $\IQ[\Gamma_I]$-chain complexes.
\end{lem}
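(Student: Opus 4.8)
The plan is to realize the desired equivalence as an explicit $\Gamma_I$-equivariant deformation retraction of $C^{CE}_*(\mathfrak{g}_I)$ onto its homology; the feature that makes this work, even though $\Gamma_I$ is infinite, is that each chain group is a \emph{semisimple} $\IQ[\Gamma_I]$-module. To see this, note that by Proposition~\ref{polynomial functor degree} the $\Gamma_I$-module $C^{CE}_l(\mathfrak{g}_I)$ is isomorphic to $\mathscr{C}_l(\lambda_I)$ for a Schur multifunctor $\mathscr{C}_l$; as Schur multifunctors are assembled from the standard representation $\lambda_I$ by tensor products, direct sums and $\Sigma_\eta$-coinvariants, this module underlies a finite dimensional (since $\mathfrak{g}_I$ is positively graded with finite dimensional graded pieces) algebraic representation of the reductive group $\mathbf{G}_I$ equal to the product of the groups $\{A\in Gl_{g_k}:\det A=\pm1\}$ for $k<n/2$ and, when $n$ is even, the factor $Sp_{2g_{n/2}}$ or $O_{g_{n/2},g_{n/2}}$. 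Since $\Gamma_I$ is Zariski dense in $\mathbf{G}_I$, the Zariski-closed stabilizer in $\mathbf{G}_I$ of any $\IQ[\Gamma_I]$-submodule is all of $\mathbf{G}_I$, so $\IQ[\Gamma_I]$-submodules coincide with $\mathbf{G}_I$-submodules; and algebraic representations of reductive groups in characteristic $0$ are semisimple.

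Granting semisimplicity, I would split the complex equivariantly: choose $\IQ[\Gamma_I]$-module splittings
$$C^{CE}_n(\mathfrak{g}_I)=B_n\oplus H'_n\oplus E_n,$$
with $Z_n=B_n\oplus H'_n$ the cycles, $B_n$ the boundaries, $H'_n$ a complement of $B_n$ inside $Z_n$ (so $H'_n\xrightarrow{\cong}H^{CE}_n(\mathfrak{g}_I)$), and $\delta^{CE}$ restricting to an isomorphism $E_n\xrightarrow{\cong}B_{n-1}$. Then $C^{CE}_*(\mathfrak{g}_I)$ is, as a complex of $\IQ[\Gamma_I]$-modules, the direct sum of $(H'_*,0)$ and the contractible two-term complexes $\bigl(E_n\xrightarrow{\cong}B_{n-1}\bigr)$, whose contracting homotopies are $\Gamma_I$-equivariant. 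The composite of the projection onto $(H'_*,0)$ with the isomorphism $H'_*\cong H^{CE}_*(\mathfrak{g}_I)$ is the required $\IQ[\Gamma_I]$-equivariant chain homotopy equivalence $\phi_I$, and its restriction to $Z_*$ is visibly $z\mapsto[z]$. The same construction applied to $I'$ yields $\phi_{I'}$.

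To finish I would check that the square commutes up to a $\IQ[\Gamma_I]$-chain homotopy. The stabilization map $\sigma_*$ is an equivariant chain map in all four corners, being induced by the equivariant inclusion $\mathfrak{g}_I\hookrightarrow\mathfrak{g}_{I'}$, $\theta\mapsto\theta'$, compatibly with the identifications of Section~\ref{On the rational homotopy type of homotopy automorphisms}; so $\sigma_*\phi_I$ and $\phi_{I'}\sigma_*$ are both equivariant chain maps $C^{CE}_*(\mathfrak{g}_I)\to H^{CE}_*(\mathfrak{g}_{I'})$, where the target carries the $\Gamma_{I'}$-action restricted along $\Gamma_I\hookrightarrow\Gamma_{I'}$. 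They induce the same map on homology, namely $\sigma_*$, because each $\phi$ induces the identity. Since the target complex has zero differential, the difference $\sigma_*\phi_I-\phi_{I'}\sigma_*$ is a chain map vanishing on $Z_*$, hence factors through $C^{CE}_*(\mathfrak{g}_I)/Z_*$, which $\delta^{CE}$ identifies with the submodule of boundaries of $C^{CE}_{*-1}(\mathfrak{g}_I)$; extending the resulting map along an equivariant splitting of the inclusion of the boundaries produces the desired equivariant chain homotopy $\sigma_*\phi_I\simeq\phi_{I'}\sigma_*$.

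The hard part is the semisimplicity input: there is no averaging over the infinite group $\Gamma_I$, so one genuinely relies on the fact that $C^{CE}_l(\mathfrak{g}_I)$ is the restriction to $\Gamma_I$ of an algebraic representation of a reductive group in which $\Gamma_I$ is Zariski dense. Once that is available, both the construction of $\phi_I$ and the homotopy in the square are routine homological algebra over the field $\IQ$.
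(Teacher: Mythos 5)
Your proof is correct, but it reaches the key input by a different road than the paper. The paper disposes of this lemma in two lines by citing the general statements \cite[Lemma B.1, Proposition B.5]{berg13}, whose only hypotheses are that the group is rationally perfect and that the complex is degreewise finite dimensional; both were already verified (Lemma \ref{gammaIh1} and the finite dimensionality of the $\g_I$). You instead re-derive the essential fact --- that every short exact sequence of the relevant finite-dimensional $\IQ[\Gamma_I]$-modules splits --- from scratch, via Zariski density of $\Gamma_I$ in a reductive group and complete reducibility in characteristic $0$, and then carry out by hand the homological algebra (splitting off the acyclic summands, and producing the homotopy from the vanishing of the difference on cycles) that the paper outsources to the appendix of \cite{berg13}. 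It is worth realizing that your semisimplicity statement is literally equivalent to the rational perfectness already proved in Lemma \ref{gammaIh1}, since $\operatorname{Ext}^1_{\IQ[\Gamma_I]}(V,W)\cong H^1(\Gamma_I;\hom(V,W))$ for finite-dimensional $V,W$; so you could have skipped the algebraic-group argument entirely and quoted that lemma. Your route has the virtue of being self-contained and of explaining \emph{why} these groups are rationally perfect from the point of view of Borel density, at the cost of a few degenerate cases you should acknowledge: $Gl_1(\IZ)$, $O_{1,1}(\IZ)$ and, when $n$ is even and $g_{n/2}$ is small, the hyperbolic factor are finite (or fail to be Zariski dense in the group you name, e.g.\ $O_{1,1}(\IZ)$ in the one-dimensional group $O_{1,1}$), and there one should replace Zariski density by Maschke's theorem for the finite factor and take the Zariski closure factorwise. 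With that caveat the argument is complete, and the remaining homological algebra --- the equivariant decomposition $C_n=B_n\oplus H'_n\oplus E_n$, the identification of $\phi_I$ on cycles with $z\mapsto[z]$, and the factorization of $\sigma_*\phi_I-\phi_{I'}\sigma_*$ through the boundaries --- is carried out correctly.
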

\begin{proof}
This is is true for all degree-wise finite dimensional $\IQ[G]$-chain complexes for $G$ rationally perfect groups by \cite[Lemma B.1]{berg13} and \cite[Proposition B.5]{berg13}. The groups $\Gamma_I$ are rationally perfect (see Lemma \ref{gammaIh1}) and the the $C^{CE}_*(\mathfrak{g}_I)$ are degree-wise finite dimensional, since the $\g_I$ are. 
\end{proof}

\begin{prop}\label{algebraic homological stability result}
The stabilization map
$$H_k(\Gamma_I, H^{CE}_l (\mathfrak{g}_I))\rightarrow H_k(\Gamma_{I'},H^{CE}_l (\mathfrak{g}_{I'}))$$
is an isomorphism for $g_p> 2k+2l+2$ when $2p\neq n$ and $g_p> 2k+2l+4$ if $2p=n$ and an epimorphism for $g_p\geq 2k+2l+2$ respectively $g_p\geq 2k+2l+4.$
\end{prop}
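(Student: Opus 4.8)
The plan is to promote the homological stability for the Chevalley--Eilenberg \emph{chains} (Corollary~\ref{homstab cor}) to the same statement for their \emph{homology}, the extra ingredient being the $\IQ[\Gamma_I]$-equivariant formality of the Chevalley--Eilenberg complex supplied by Lemma~\ref{hyperhom lemma}. Since $\mathfrak{g}_I=\Derpw(\IL_I)$ has vanishing internal differential, the only Chevalley--Eilenberg differential is $\delta_1$, so $C^{CE}_*(\mathfrak{g}_I)$ is a bounded-below, degree-wise finite-dimensional chain complex of $\IQ[\Gamma_I]$-modules (graded by internal degree) whose homology is $H^{CE}_*(\mathfrak{g}_I)$. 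First I would sharpen the stability statement for the chains: by Proposition~\ref{polynomial functor degree} the functor $\mathscr{C}_m$ is polynomial of degree $\le\lfloor m/2\rfloor$, so Proposition~\ref{general algebraic homological stability result} shows that
\[
H_k(\Gamma_I;C^{CE}_m(\mathfrak{g}_I))\longrightarrow H_k(\Gamma_{I'};C^{CE}_m(\mathfrak{g}_{I'}))
\]
is an isomorphism already for $g_p>2k+\lfloor m/2\rfloor+2$ (resp. $>2k+\lfloor m/2\rfloor+4$ when $2p=n$) and an epimorphism on the corresponding non-strict bound. In particular, for every $m\in\{l-1,l,l+1\}$ this holds whenever $g_p>2k+2l+2$ (resp. $>2k+2l+4$), since $\lfloor m/2\rfloor\le 2l$ for all such $m$.

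Next I would apply the functor $H_k(\Gamma_I;-)$ degree-wise to the $\IQ[\Gamma_I]$-chain homotopy equivalence $C^{CE}_*(\mathfrak{g}_I)\xrightarrow{\simeq}H^{CE}_*(\mathfrak{g}_I)$ of Lemma~\ref{hyperhom lemma} (target with zero differential). Because $H_k(\Gamma_I;-)$ is additive it preserves $\IQ[\Gamma_I]$-chain homotopies, so this produces a chain homotopy equivalence of complexes of rational vector spaces $H_k(\Gamma_I;C^{CE}_*(\mathfrak{g}_I))\xrightarrow{\simeq}H_k(\Gamma_I;H^{CE}_*(\mathfrak{g}_I))$, again with zero differential on the right. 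Taking homology in the internal-degree variable thus yields a natural isomorphism
\[
H_l\bigl(H_k(\Gamma_I;C^{CE}_*(\mathfrak{g}_I))\bigr)\;\cong\;H_k(\Gamma_I;H^{CE}_l(\mathfrak{g}_I)),
\]
naturality in $I$ (with respect to the stabilization maps) being exactly the commutativity clause of Lemma~\ref{hyperhom lemma}. Equivalently, this identifies the $E^2$-page of the hyperhomology spectral sequence of $C^{CE}_*(\mathfrak{g}_I)$, at internal degree $l$ and group-homology degree $k$, with $H_k(\Gamma_I;H^{CE}_l(\mathfrak{g}_I))$.

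Finally I would carry out the comparison. The stabilization map induces a map of internal-degree-graded chain complexes $H_k(\Gamma_I;C^{CE}_*(\mathfrak{g}_I))\to H_k(\Gamma_{I'};C^{CE}_*(\mathfrak{g}_{I'}))$ which, by the first paragraph, is a level-wise isomorphism in internal degrees $l-1$ and $l$ and a level-wise epimorphism in internal degree $l+1$ as soon as $g_p>2k+2l+2$ (resp. $>2k+2l+4$). A chain map with these properties induces an isomorphism on $H_l$, and an epimorphism already on the non-strict bound. Transporting this along the identification of the second step gives precisely the asserted stability for $H_k(\Gamma_I;H^{CE}_l(\mathfrak{g}_I))$.

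The step I expect to require the most care is the middle one: it is essential that Lemma~\ref{hyperhom lemma} provides an equivalence of complexes of $\IQ[\Gamma_I]$-modules rather than merely a rational equivalence of underlying complexes, since $H_k(\Gamma_I;-)$ does not commute with passing to the homology of a complex of $\IQ[\Gamma_I]$-modules. This is where the rational perfectness of $\Gamma_I$ (Lemma~\ref{gammaIh1}) enters, and it is also what makes the $E^2$-identification valid, so that the only loss in the range is the passage from a module in internal degree $m$ to an adjacent one --- comfortably absorbed by the slack between $\lfloor m/2\rfloor$ and $2m$. A secondary point to check is that the stabilization map genuinely preserves the internal degree (which it does, being induced by the degree-$0$ map $\mathfrak{g}_I\to\mathfrak{g}_{I'}$ that extends a derivation by zero on the new generators), so that the objects above really are maps of chain complexes.
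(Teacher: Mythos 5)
Your proposal is correct, and it rests on exactly the two inputs the paper's own proof uses: the chain-level stability coming from the polynomial degree bound on $\mathscr{C}_m$ (Corollary \ref{homstab cor}) and the $\IQ[\Gamma_I]$-equivariant formality of the Chevalley--Eilenberg complex (Lemma \ref{hyperhom lemma}), which is where rational perfectness of $\Gamma_I$ enters. The difference is in the bookkeeping. The paper runs the first hyperhomology spectral sequence $H_l(\Gamma_I;C^{CE}_k(\mathfrak{g}_I))\Rightarrow \IH_{k+l}(\Gamma_I;C^{CE}_*(\mathfrak{g}_I))$, applies the spectral sequence comparison theorem to obtain stability of the hyperhomology, transports this along the formality equivalence using chain homotopy invariance of hyperhomology, and finally uses the natural splitting $\IH_i(\Gamma_I;H^{CE}_*(\mathfrak{g}_I))\cong\bigoplus_{k+l=i}H_k(\Gamma_I;H^{CE}_l(\mathfrak{g}_I))$ for coefficients with trivial differential. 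You instead apply $H_k(\Gamma_I;-)$ levelwise to the formality equivalence (legitimate, as you note, because an additive functor preserves chain homotopies and the commuting-up-to-homotopy square of Lemma \ref{hyperhom lemma} gives naturality in $I$) and conclude by a direct three-term comparison in chain degrees $l-1$, $l$, $l+1$; your check that a chain map which is an isomorphism in degrees $l-1$ and $l$ and an epimorphism in degree $l+1$ induces an isomorphism on $H_l$ (and an epimorphism on the non-strict bound, using injectivity in degree $l-1$) is sound. Your version is more elementary, avoiding hyperhomology and the comparison theorem entirely, and makes explicit which adjacent chain degrees are consumed; the paper packages the same content into standard machinery. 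Both give the same range, since the slack between the degree bound $\le m/2$ and the coarser $2m$ used in the statement absorbs the shift to neighbouring degrees.
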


\begin{proof}
Consider the first hyperhomology spectral sequence with $E^1$-page:
$$ E^1_{k,l}(I)=H_l((\Gamma_I; C^{CE}_k (\mathfrak{g}_I)))\Rightarrow \IH_{k+l}(\Gamma_I; C^{CE}_* (\mathfrak{g}_I)).$$ By Corollary \ref{homstab cor} $E^1_{k,l}(I)\rightarrow E^1_{k,l}(I')$ is an isomorphism for $$g_p>\begin{cases} 2k+2l+4 \geq k+2l+4 & \text{ if } p=n/2  \\ 2k+2l+2 \geq k+2l+2 & \text{ otherwise} \end{cases}$$ and an epimorphism for $"\geq"$. By the spectral sequence comparison theorem we get that the map
$$\IH_i(\Gamma_I, C^{CE}_* (\mathfrak{g}_I))\rightarrow \IH_i(\Gamma_{I'},C^{CE}_* (\mathfrak{g}_{I'}))$$ induced by the stabilization map
is an isomorphism for $g_p>2i+2$ when $2p\neq n$ and $g_p>2i+4$ if $2p=n$ and an epimorphism for $"\geq"$.
Upon using Lemma \ref{hyperhom lemma} and the chain homotopy invariance of hyperhomology we get that the map
$$\IH_i(\Gamma_I, C^{CE}_* (\mathfrak{g}_I))\rightarrow \IH_i(\Gamma_{I'},C^{CE}_* (\mathfrak{g}_{I'}))$$ induced by the stabilization map
is an isomorphism and epimorphism in the same rage as above. Ultimately we use the natural splitting for hyperhomology groups with coefficients in a chain complex with trivial differential
\begin{equation*}
\xymatrix{
\IH_i(\Gamma_I;H^{CE}_* (\mathfrak{g}_I)) \ar[r]^-{\sigma_i}\ar[d]_{\cong}  & \IH_i(\Gamma_{I'}; H^{CE}_* (\mathfrak{g}_{I'})) \ar[d]_{\cong} \\
\bigoplus_{k+l=i}H_k(\Gamma_I;H^{CE}_l (\mathfrak{g}_I)) \ar[r]^-{\sigma_{k,l}} & \bigoplus_{k+l=i} H_k(\Gamma_{I'};H^{CE}_l (\mathfrak{g}_{I'})).}
\end{equation*}
Hence we see that the maps $\sigma_{k,l}$ are isomorphisms and epimorphisms in the range in the statement of Proposition \ref{algebraic homological stability result}.

\end{proof}
\subsection{Homological stability for homotopy automorphisms}

The first main result of this article now easily follows from the previous results.

\begin{THMA}
The map $$H_i(B\aut_\partial(N_I);\IQ)\rightarrow H_i(B\aut_\partial(N_{I'});\IQ)$$ induced by the stabilization map
is an isomorphism for $g_p> 2i+2$ when $2d\neq n$ and $g_p> 2i+4$ if $2d=n$ and an epimorphism for $g_p\geq 2i+2$ respectively $g_p\geq 2i+4.$
\end{THMA}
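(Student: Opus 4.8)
The plan is to deduce Theorem~A from the algebraic homological stability statement of Proposition~\ref{algebraic homological stability result} by running the Cartan--Leray spectral sequence of the universal covering and comparing $E^2$-pages. The stabilization map $\sigma$ induces a based map $B\aut_\partial(N_I)\to B\aut_\partial(N_{I'})$, which lifts to a $\pi_I$-equivariant map of universal coverings $B\aut_\partial(N_I)\univ\to B\aut_\partial(N_{I'})\univ$, and hence a morphism between the spectral sequences
$$E^2_{s,t}(I)=H_s\big(\pi_I;\,H_t(B\aut_\partial(N_I)\univ;\IQ)\big)\;\Longrightarrow\;H_{s+t}(B\aut_\partial(N_I);\IQ),$$
where $\pi_I=\pi_1(B\aut_\partial(N_I))\cong\pi_0(\aut_\partial(N_I))$.

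First I would rewrite the coefficients over $\pi_I$ as coefficients over $\Gamma_I$. The kernel $K$ of $\tilde H_*\colon\pi_I\to\Gamma_I$ is finite by Proposition~\ref{hmcg}, and by Lemma~\ref{gamma I action on universal cover}, together with the description of the deck-transformation action in Proposition~\ref{der iso comp with action}, $K$ acts trivially on $H_t(B\aut_\partial(N_I)\univ;\IQ)$. Since the rational homology of a finite group is concentrated in degree $0$, the Lyndon--Hochschild--Serre spectral sequence of $1\to K\to\pi_I\to\Gamma_I\to1$ collapses and yields a natural isomorphism $H_s(\pi_I;\,H_t(B\aut_\partial(N_I)\univ;\IQ))\cong H_s(\Gamma_I;\,H_t(B\aut_\partial(N_I)\univ;\IQ))$. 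Applying Proposition~\ref{iso Hce and H as piI-modules} I would then identify $H_t(B\aut_\partial(N_I)\univ;\IQ)\cong H^{CE}_t(\mathfrak{g}_I)$ as $\Gamma_I$-modules compatibly with stabilization, so that $E^2_{s,t}(I)\cong H_s(\Gamma_I;\,H^{CE}_t(\mathfrak{g}_I))$ and $\sigma_*$ on the $E^2$-page is precisely the stabilization map studied in Proposition~\ref{algebraic homological stability result}. (The rational perfectness of $\pi_I$ from Lemma~\ref{H1 triv pi0}, which rules out extension problems in this identification of $\pi_I$-modules, is already incorporated into Proposition~\ref{iso Hce and H as piI-modules}.)

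Finally I would invoke the spectral sequence comparison theorem exactly as in the proof of Proposition~\ref{algebraic homological stability result}. By that proposition, $\sigma_*\colon E^2_{s,t}(I)\to E^2_{s,t}(I')$ is an isomorphism when $g_p>2(s+t)+2$ (respectively $g_p>2(s+t)+4$ if $2p=n$) and an epimorphism under the corresponding non-strict inequalities; in particular, if $g_p>2i+2$ (resp. $g_p>2i+4$) it is an isomorphism for all $s+t\le i$ and an epimorphism for $s+t=i+1$, so the comparison theorem gives that $\sigma_*\colon H_i(B\aut_\partial(N_I);\IQ)\to H_i(B\aut_\partial(N_{I'});\IQ)$ is an isomorphism, and the epimorphism statement follows from the $"\geq"$ ranges in the same way.

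All the genuine work sits in the inputs, so I expect this proof to be very short. The one point that really needs checking is that the finite kernel $K$ acts trivially on $H_*(B\aut_\partial(N_I)\univ;\IQ)$ --- which is exactly the content of Lemma~\ref{gamma I action on universal cover} combined with Proposition~\ref{der iso comp with action} --- together with the routine bookkeeping that turns the $E^2$-page stability ranges into ranges for the abutment; neither of these constitutes a real obstacle.
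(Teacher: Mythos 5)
Your proposal is correct and follows essentially the same route as the paper: the universal covering spectral sequence, the identification $H_s(\pi_I;-)\cong H_s(\Gamma_I;-)$ via the finiteness and trivial action of the kernel of $\tilde H_*$, the identification of the coefficients with $H^{CE}_t(\mathfrak{g}_I)$ from Proposition \ref{iso Hce and H as piI-modules}, and the spectral sequence comparison theorem applied to Proposition \ref{algebraic homological stability result}. The only difference is cosmetic (you cite Lemma \ref{gamma I action on universal cover} explicitly for the triviality of the kernel's action, where the paper points to Propositions \ref{hmcg} and \ref{der iso comp with action}), and your range bookkeeping matches the paper's.
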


\begin{proof} We begin by observing that  $$H_k(\Gamma_I, H^{CE}_l (\mathfrak{g}_I))\cong H_k(\pi_I, H^{CE}_l (\mathfrak{g}_I)),$$ since the action of $\pi_I$ is through $\tilde H :\pi_I\to \Gamma_I$ and the kernel of $\tilde H_*$ is finite (Propositions \ref{hmcg} and \ref{der iso comp with action}).
The result now follows from Proposition \ref{algebraic homological stability result} combined with Proposition \ref{iso Hce and H as piI-modules} upon using the spectral sequence comparison theorem.
\end{proof}

\subsection{Homological stability for block diffeomorphisms}
  
Denote by $\baut_\partial(X)$ the $\Delta$-monoid of block homotopy equivalences, with $k$-simplices face preserving homotopy equivalences $$\varphi:\Delta^k\times X\rightarrow \Delta^k\times X,$$ s.t. $\varphi|\Delta^k\times \partial X$ is the identity. The block diffeomorphisms $\bdiff_\partial (X)$ are the sub-$\Delta$-group with $k$-simplices, face preserving diffeomorphisms $$\varphi:\Delta^k\times X\rightarrow \Delta^k\times X,$$ s.t.$\varphi$ is the identity on a neighborhood of $\Delta^k\times \partial X.$ We do not distinguish between $\Delta$-objects and their realizations. Denote the inclusion $\bdiff_\partial(X)\hookrightarrow\baut_\partial(X)$ by $\tilde J$. The inclusion $$\aut_\partial (X) \hookrightarrow \baut_\partial (X)$$ is a homotopy equivalences and hence we are going to consider then as identified. The block diffeomorphisms $\bdiff_\partial (X)$ and the diffeomorphisms $\diff_\partial (X)$ with the Whitney C$^\infty$-topology on the other hand are not homotopy equivalent - the difference is related to algebraic K-theory (see \cite{MR1818774}). The homogeneous space $\baut_\partial(X)/\bdiff_\partial(X)$ is by definition the homotopy fiber of the map $\tilde J:B\bdiff_\partial (X)\rightarrow B\baut_\partial (X).$ It is related to Surgery theory as we explain now.

Let $X$ be a smooth manifolds of dimension $\geq 5$ with boundary $\partial X.$ Quinn \cite{quinn} shows that there is a quasi-fibration of Kan $\Delta$-sets $$\mathcal{S}^{G/O}_\partial(X)\rightarrow \map_*(X/\partial X,G/O)\rightarrow \mathbb{L}(X) $$ and that its homotopy exact sequence is the surgery exact sequence. $\mathcal{S}^{G/O}_\partial(X)$ is the realization of a $\Delta$-set with $k$-simplexes pairs $(W,f),$ where $W$ is a smooth $(k+3)$-ad and $f:W\rightarrow \Delta^k\times X$ is a face preserving homotopy equivalence, such that $f$ restricts to a diffeomorphisms $f|\partial_{k+1}W:\partial_{k+1}W\rightarrow \Delta^k\times \partial X.$ There is a map $$\baut_\partial(X)/\bdiff_\partial(X)\rightarrow \mathcal{S}^{G/O}_\partial(X), $$ which by the h-cobordism theorem induces a weak homotopy equivalence $$\baut_\partial(X)/\bdiff_\partial(X)_{(1)}\simeq_{w.e.}\mathcal{S}^{G/O}_\partial(X)_{(1)}$$ of the identity components (see \cite[Section 3.2.]{berg12}). Now assume that $X$ is simply connected. Since $$G/O\simeq_{\IQ}BO \simeq_{\IQ} \prod_{i\geq 1} K(\IQ,4i),$$ we understand the rational homotopy groups $$\pi_*(\map_*(X/\partial X,G/O))\otimes \IQ \cong H^*(X,\partial X;\IQ)\otimes \pi_*(G/O).$$ Note that if $X$ is simply connected $$\pi_i(\mathbb{L}(X))\otimes \IQ\cong L_{\operatorname{dim}(X)+i}(X) \cong\begin{cases}\IQ  &\text{if} \operatorname{dim}(X) + i \equiv 0 \text{ mod }4\\
0  &\text {otherwise.}
\end{cases}$$

We now specialize to $N_I.$
\begin{lem}[{\cite[Lemma 3.5.]{berg12}}]\label{plumbinglemma} The surgery obstruction map induces an isomorphism
$$H^{n}(N_I,\partial N_I;\IQ)\otimes \pi_{n+k}(G/O)\rightarrow L_{n+k}(\IZ)\otimes\IQ$$ for $n+k\equiv 0 \text{ mod } 4.$
\end{lem}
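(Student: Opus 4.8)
The statement is essentially a computation of the rational surgery obstruction map for the manifold $N_I$, and the strategy is to reduce it, degree by degree, to the classical plumbing/signature calculation for $\IZ$. First I would recall that $N_I$ is simply connected of dimension $n \geq 6$ with boundary diffeomorphic to $S^{n-1}$, so the $L$-groups appearing are $L_{n+k}(\IZ[\pi_1 N_I]) = L_{n+k}(\IZ)$, and for $n+k \equiv 0 \bmod 4$ these are $\IZ$ (detected by one eighth of the signature, or by the Arf invariant in the relevant residue class — but here we are in the $0 \bmod 4$ case so it is the signature). Rationally $L_{n+k}(\IZ) \otimes \IQ \cong \IQ$. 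On the source side, $H^n(N_I, \partial N_I; \IQ) \cong H_0(N_I;\IQ) \cong \IQ$ by Poincar\'e--Lefschetz duality (since $N_I$ is connected and $(N_I,\partial N_I)$ is an oriented $n$-manifold with boundary), and $\pi_{n+k}(G/O) \otimes \IQ \cong \IQ$ precisely when $n+k \equiv 0 \bmod 4$, generated by a class of appropriate degree coming from $G/O \simeq_\IQ \prod_{i\ge 1} K(\IQ,4i)$. So both sides are one-dimensional $\IQ$-vector spaces, and the content is that the map between them is nonzero.

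**Key steps.** The cleanest route is: (1) identify the surgery obstruction map on the relevant summand with the assembly/obstruction map for the closed manifold obtained by capping off $\partial N_I = S^{n-1}$ with a disc — or rather, work directly with the relative surgery problem $(N_I,\partial N_I)$, since $\partial N_I$ is a homotopy sphere and the boundary condition is a diffeomorphism. (2) Recall the classical fact (Kervaire--Milnor, Wall, or the discussion in \cite{berg12,quinn}) that for a simply connected closed manifold $X^n$ of dimension $\equiv 0 \bmod 4$, the surgery obstruction of a normal map with underlying $G/O$-bundle data $\xi$ is, up to a nonzero rational constant, $\langle \mathcal{L}(X)\, \mathrm{ph}(\xi), [X]\rangle$ where the Pontryagin character $\mathrm{ph}$ picks out the $K(\IQ,4i)$-components; the key nondegeneracy is that the top-dimensional component pairs nontrivially with the fundamental class. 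Since $H^n(N_I,\partial N_I;\IQ)$ is generated by the dual of the fundamental class, evaluating on $[N_I,\partial N_I]$ against the generator of $\pi_{n+k}(G/O)\otimes \IQ$ gives a nonzero multiple of the generator of $L_{n+k}(\IZ)\otimes\IQ$. (3) Conclude that a map of one-dimensional $\IQ$-vector spaces that is nonzero is an isomorphism. For $k > 0$ one either invokes the same $\mathcal{L}$-class formula in the relative setting, or — following \cite[Lemma 3.5]{berg12} — uses that $N_I$ admits a handle decomposition with handles in the middle dimensions only, so the relevant normal invariants can be realized by plumbing, and the signature obstruction is literally the plumbing invariant, which is surjective onto $L_{n+k}(\IZ)$ by construction.

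**Main obstacle.** I expect the genuine work to be in step (2): pinning down that the surgery obstruction map really is (a nonzero rational multiple of) evaluation of the $\mathcal{L}$-class against the fundamental class in every degree $n+k \equiv 0 \bmod 4$, not just in degree $n$ itself. For $k=0$ this is the classical closed-manifold formula, but for $k>0$ one is pairing a cohomology class of $N_I \times \Delta^\bullet$-type (an element of $\pi_{n+k}$ of the mapping space) with $[N_I,\partial N_I]$, and one must be careful that the suspension/$\Delta$-set structure does not introduce extra vanishing. The remedy, which is exactly what \cite{berg12} does, is to reduce to the product $N_I \times \interior(D^k)$ or to use the periodicity of $L$-theory together with the plumbing construction: a suitable normal map on a plumbing of disc bundles over spheres realizes the generator of $L_{n+k}(\IZ)$, and pushing it into $N_I$ via an embedded middle-dimensional sphere shows the obstruction map hits the generator. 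Once surjectivity onto a one-dimensional target is established, injectivity (hence bijectivity) is automatic by dimension count, so the argument reduces — as stated — to citing \cite[Lemma 3.5]{berg12} after checking that $N_I$ meets its hypotheses: simply connected, dimension $\geq 5$, boundary a homotopy sphere, and built from middle-dimensional handles.
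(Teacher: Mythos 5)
Your proposal is correct and lands on essentially the same argument as the paper: both sides are one-dimensional $\IQ$-vector spaces, and the map is nonzero because Milnor's plumbing construction produces a normal map of $N_I\times D^k$ rel boundary with nontrivial surgery obstruction — the route you give in your final paragraph is exactly what the paper (following \cite[Lemma 3.5]{berg12}) does. The only cosmetic difference is that the paper detects nontriviality by passing through the topological normal invariants $N_\partial^{G/Top}(N_I\times D^k)$ and using finiteness of $\pi_*(Top/O)$ to transfer back to $G/O$, rather than the $\mathcal{L}$-class evaluation you sketch as your primary route (which is only needed, and only straightforward, for $k=0$).
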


\begin{proof}
Consider the smooth and topological surgery exact sequences:
\begin{displaymath}
\xymatrix{...\ar[r] &N_\partial^{G/O} (N_I\times D^k)\otimes \IQ \ar[d] \ar[r] & L_{n+k}(\IZ)\otimes \IQ\ar@{=}[d] \ar[r]&...\\
...\ar[r] & N_\partial^{G/Top} (N_I\times D^k)\otimes \IQ\ar[r] & L_{n+k}(\IZ)\otimes \IQ\ar[r]&...}  
\end{displaymath}
The left hand vertical map is an isomorphism since $\pi_i(Top/O)$ is finite (see e.g. \cite{MR0645390}). Milnor's Plumbing construction ensures that for $k+n$ even there is an element in $N_\partial^{G/Top} (N_I\times D^k)$ with non-trivial surgery obstruction. Since $$N_\partial^{G/Top} (N_I\times D^k)\cong \pi_k(map_* (N_I/\partial N_I,G/Top) \text{ and }$$
$$\pi_k(map_* (N_I/\partial N_I,G/O)\otimes \IQ\cong H^{n}(N_I,\partial N_I;\IQ)\otimes \pi_{n+k}(G/O)), \text{ for $n+k\equiv 0 \text{ mod } 4,$}$$ this implies the claim because both sides are just one dimensional rational vectorspaces.
\end{proof}

This now implies that we have a natural isomorphism
\begin{equation}\label{naturalmapandH^*}
\pi_k(\mathcal{S}^{G/O}_\partial(N_I))\otimes \IQ\cong H^n(N_I;\IQ)\otimes \pi_{n+k}(G/O) \text { for } *>0
 \end{equation}
(compare \cite[Corollary 4.6.]{berg13}).

Recall that $J_0\pi_0(\diff_\partial (N_I))$ has finite index in $\pi_0(\aut_\partial (N_I))$ (Proposition \ref{mcg}). By Cerf's pseudo-isotopy theorem thus also $\tilde J_1\pi_1(B\bdiff_\partial (N_I))$ in $\pi_1(B\aut_\partial (N_I)).$ Denote by $\bar B\aut_\partial (N_I)$ the finite cover corresponding to $\operatorname{Image}(\tilde J_1).$ Note that it has the same (higher) rational homotopy groups. By construction $\tilde J$ lifts to a map $B\bdiff_\partial (N_I)\rightarrow \bar B\aut_\partial (N_I).$ Instead of $\baut_\partial(N_I)/\bdiff_\partial(N_I)$ we consider $$\mathcal{F}_I=\operatorname{hofib}(B\bdiff_\partial (N_I)\rightarrow \bar B\aut_\partial (N_I)).$$ We reduced the problem of showing rational homological stability for the block diffeomorphisms to the study of the Serre spectral sequence of the homotopy fibration above. The only missing ingredient is now to understand the rational homology groups of $\mathcal{F}_I$ as $\bar\pi_I =\pi_1(\bar B\aut_\partial (N_I))$-modules. Observe that by Proposition \ref{mcg} there is a surjection $$\pi_1(\bar B\aut_\partial (N_I))\rightarrow \Gamma_I$$ induced by $\tilde H_*.$

Denote by $\eta: S^{G/O}_\partial(X) \rightarrow \pi_0\map_*(X/\partial X,G/O)$ the normal invariant and by $\sigma:\pi_0\map_*(X/\partial X,G/O) \rightarrow L_{\operatorname{dim}(X)}(X)$ the surgery obstruction. Using the surgery exact sequence, we see that for $n$ odd $\pi_1(\mathcal{F}_I)$ is abelian, since it is a subgroup of the abelian group $[\Sigma (N_I/\partial N_I),G/O]_*$. For $n$ even it is a finite extension of the abelian group $[\Sigma (N_I/\partial N_I),G/O]_*$ by a finite cyclic group (in case $L_{n+2}(\IZ)\cong \IZ$, the proof of Lemma \ref{plumbinglemma} makes sure that the map to $L_{n+2}(\IZ)$ is non zero and hence the kernel of $\sigma$ is a finite cyclic group). Denote by $$\pi_k^{ab}(\mathcal{F}_I)=\begin{cases} \pi_1(\mathcal{F}_I)/ \operatorname{Image}(L_{n+2}(\IZ)\rightarrow \pi_1(\mathcal{F}_I)) & \text{ if } k=1 \\ \pi_k(\mathcal{F}_I) & \text{ if } k>1.\end{cases} $$

\begin{prop}There are isomorphisms of $\bar\pi_I$-modules compatible with the stabilization maps
\begin{enumerate} 
	\item[$\operatorname{(1)}$] $\pi^{ab}_k(\mathcal{F}_I)\otimes \IQ\cong (\tilde H_{*}(N_I,\IQ) \otimes \pi_*(G/O))_{k}$, where $|a\otimes \alpha|=|\alpha|-|a|$, $k\geq 1$
	\item[$\operatorname{(2)}$] $H_*(\mathcal{F}_I,\IQ) \cong \Lambda(\pi^{ab}_*(\mathcal{F}_I)\otimes \IQ)$,
\end{enumerate} where the actions on the left hand side are induced by the standard actions of $\Gamma_I$ on $\tilde H_*(N_I).$ 
\end{prop}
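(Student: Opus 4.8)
The plan is to identify $\mathcal{F}_I$ with a component of the block structure space $\mathcal{S}^{G/O}_\partial(N_I)$ and to read off its rational homotopy and homology from Quinn's surgery fibration. First I would observe that $\mathcal{F}_I$ is connected: $\bar B\aut_\partial(N_I)$ is by construction the cover of $B\aut_\partial(N_I)$ with fundamental group $\operatorname{Image}(\tilde J_1)$, so $\pi_1(B\bdiff_\partial(N_I))\to\pi_1(\bar B\aut_\partial(N_I))$ is onto, and the fibration sequence $\mathcal{F}_I\to\baut_\partial(N_I)/\bdiff_\partial(N_I)\to\bar\pi_I\backslash\pi_I$ (the last term discrete) exhibits $\mathcal{F}_I$ as the component of $\baut_\partial(N_I)/\bdiff_\partial(N_I)$ through the basepoint, i.e. the identity component. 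By the argument with the $h$-cobordism theorem recalled above, $\mathcal{F}_I\simeq\mathcal{S}^{G/O}_\partial(N_I)_{(1)}$. All of this, and Quinn's fibration, the normal invariant, the surgery obstruction and Poincar\'e--Lefschetz duality below, is natural for self-diffeomorphisms of $N_I$, so the holonomy action of $\bar\pi_I$ on $\mathcal{F}_I$ is compatible, via the surjection $\bar\pi_I\to\Gamma_I$ of Proposition \ref{mcg}, with the standard $\Gamma_I$-action on $\tilde H_*(N_I)$; and the stabilization inclusion $N_I\hookrightarrow N_{I'}$ induces all the stabilization maps.

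For (1) I would run the long exact homotopy sequence of $\mathcal{S}^{G/O}_\partial(N_I)\to\map_*(N_I/\partial N_I,G/O)\to\mathbb{L}(N_I)$ after $\otimes\,\IQ$. Using $G/O\simeq_\IQ\prod_{i\ge1}K(\IQ,4i)$ one has natural $\Gamma_I$-isomorphisms $\pi_k(\map_*(N_I/\partial N_I,G/O))\otimes\IQ\cong\bigoplus_{m}\tilde H^m(N_I,\partial N_I;\IQ)\otimes\pi_{m+k}(G/O)\otimes\IQ$ and $\pi_k(\mathbb{L}(N_I))\otimes\IQ\cong L_{n+k}(\IZ)\otimes\IQ$ with trivial action. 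By Lemma \ref{plumbinglemma} the surgery obstruction map restricts to an isomorphism on the fundamental-class summand $\tilde H^n(N_I,\partial N_I;\IQ)\cong\IQ$ and vanishes on the remaining summands (which lie in degrees $m<n$), so on each $\pi_k$ and $\pi_{k+1}$ it is a rational split surjection; the exact sequence then yields $\pi_k(\mathcal{F}_I)\otimes\IQ\cong\bigoplus_{0<m<n}\tilde H^m(N_I,\partial N_I;\IQ)\otimes\pi_{m+k}(G/O)\otimes\IQ$ for $k\ge1$, which Poincar\'e--Lefschetz duality rewrites as $(\tilde H_*(N_I,\IQ)\otimes\pi_*(G/O))_k$ in the grading of the statement. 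For $k=1$ the same computation identifies the term with $\ker\bigl(\sigma\colon\pi_1(\map_*)\to L_{n+1}(\IZ)\bigr)$; the only new feature is the finite cyclic group $\operatorname{coker}\bigl(\pi_2(\map_*)\to L_{n+2}(\IZ)\bigr)$ inside $\pi_1(\mathcal{F}_I)$ when $n$ is even, which is exactly what is divided out in the definition of $\pi_1^{ab}(\mathcal{F}_I)$ and is rationally invisible. This proves (1).

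For (2) I would show $\mathcal{F}_I$ is rationally a product of Eilenberg--MacLane spaces. Again by $G/O\simeq_\IQ\prod_{i\ge1}K(\IQ,4i)$, the space $\map_*(N_I/\partial N_I,G/O)$ is rationally $\prod_i\map_*(N_I/\partial N_I,K(\IQ,4i))$, a product of Eilenberg--MacLane spaces, so all Whitehead brackets in its rational homotopy vanish. The normal invariant map $\mathcal{F}_I\to\map_*(N_I/\partial N_I,G/O)$ is rationally injective on $\pi_k$ for $k\ge2$ — its kernel is $\operatorname{coker}\bigl(\pi_{k+1}(\map_*)\to\pi_{k+1}(\mathbb{L}(N_I))\bigr)$, finite by Lemma \ref{plumbinglemma} — and being a map of spaces it respects Whitehead brackets, so $\pi_{\ge2}(\mathcal{F}_I)\otimes\IQ$ has trivial bracket. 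Combined with $\pi_1^{ab}(\mathcal{F}_I)$ being abelian — the role of the passage to $\pi^{ab}$, which removes the finite cyclic $L_{n+2}(\IZ)$-contribution for $n$ even — this forces $\mathcal{F}_I$ to be a rational product of Eilenberg--MacLane spaces, whence $H_*(\mathcal{F}_I;\IQ)\cong\Lambda\bigl(\pi^{ab}_*(\mathcal{F}_I)\otimes\IQ\bigr)$ as graded commutative algebras. Naturality in $N_I$ of every step makes this isomorphism, and the one in (1), $\bar\pi_I$-equivariant and compatible with the stabilization maps.

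The main obstacle is the bookkeeping in (1): using Lemma \ref{plumbinglemma} to pin down that the surgery obstruction is rationally a split surjection hitting exactly the fundamental-class summand and nothing more, and then tracking the low-degree and finite contributions — the $L_{n+1}(\IZ)$- and $L_{n+2}(\IZ)$-terms for $n$ even — so that the $\pi^{ab}$-normalization, the duality identification, and the $\Gamma_I$-module structures line up precisely as stated. Once $\mathcal{F}_I$ is known to be a rational product of Eilenberg--MacLane spaces, (2) is formal.
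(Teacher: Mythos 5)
Your treatment of (1) follows the paper's route: long exact sequence of Quinn's fibration, Lemma \ref{plumbinglemma} to see that the surgery obstruction is rationally onto from the fundamental-class summand, and Poincar\'e--Lefschetz duality; the bookkeeping with $\pi_1^{ab}$ is also as in the paper. But there are two genuine problems. First, in (2) the inference ``all rational Whitehead brackets vanish, hence $\mathcal{F}_I$ is a rational product of Eilenberg--MacLane spaces'' is false in general: Whitehead products only detect the quadratic part of the $k$-invariants (the quadratic part of the differential in the minimal Sullivan model), and higher-order $k$-invariants can survive --- e.g.\ the two-stage Postnikov piece with $\pi_2=\pi_5=\IQ$ and $k$-invariant $x^3\in H^6(K(\IQ,2);\IQ)$ has trivial Whitehead products but is not a product of Eilenberg--MacLane spaces. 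The correct deduction, which is the one the paper makes, uses the infinite loop space structure of $G/O$: $\map_*(N_I/\partial N_I,G/O)$ has vanishing rational $k$-invariants, and since the normal invariant is rationally injective on homotopy groups, $(\mathcal{F}_I)_\IQ$ is a retract up to rational equivalence of a product of Eilenberg--MacLane spaces and hence is one. You already have the injectivity statement, so the repair is immediate, but the step as written would fail.

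Second, the identification of the action is asserted by ``naturality'' but this is exactly where the content lies, since the proposition claims the action is the \emph{standard} $\Gamma_I$-action and factors through $\Gamma_I$. The paper's argument uses that an element of $\bar\pi_I=\operatorname{Image}(\tilde J_1)$ is represented by a genuine diffeomorphism $\phi$, together with the composition formula $\eta((\phi\times\id)\circ f)=((\phi\times\id)^*)^{-1}(\eta(f))+\eta(\phi\times\id)$ and the vanishing of the normal invariant of a diffeomorphism --- this is precisely why one passes to the cover $\bar B\aut_\partial(N_I)$ in the first place, and it pins down the action as $(\phi^{-1})^*\otimes\id$, i.e.\ the standard action after dualizing. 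One also needs a separate argument (as in Lemma \ref{gamma I action on universal cover}) that the kernel of $\bar\pi_I\to\Gamma_I$ acts trivially; your proposal does not address this.
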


\begin{proof} Compare \cite[p.26 and Theorem 3.6]{berg12} and \cite[Proposition 7.15.]{berg13}. Observe that the rationalization $(\mathcal{F}_I)_{\IQ}$ has rational homotopy groups $\pi^{ab}_k(\mathcal{F}_I)\otimes \IQ.$
Consider the splitting of the homotopy exact sequence of the surgery fibration as
$$0\rightarrow L_{n+k+1}(\IZ)/\operatorname{Image}(\sigma)\rightarrow \pi_k (\mathcal{S}_\partial^{G/O}(N_I)) \rightarrow \operatorname{Image}(\eta)\rightarrow 0.$$ By Lemma \ref{plumbinglemma} we get $$L_{n+k+1}(\IZ)/\operatorname{Image}(\sigma)\otimes \IQ\cong 0\text{ and }\operatorname{Image}(\eta)\otimes \IQ \cong (\tilde H^*(N_I,\IQ) \otimes \pi_*(G/O))_{k}.$$ Using the isomorphism $$\tilde H_*(N_I;\IQ)\cong \hom_\IQ(\tilde H_*(N_I;\IQ);\IQ)\cong \tilde H^*(N_I;\IQ)$$  we get the isomorphism $(1).$ We see that the action on the right hand side is induced by the standard action of $\Gamma_I$ as follows: Use the identification $$\pi_k (\mathcal{S}^{G/O}_\partial(X))\cong S_\partial^{G/O}( N_I\times D^k).$$ An element of $S_\partial^{G/O}( N_I\times D^k)$ is represented by a manifold $(X,\partial X)$ together with a homotopy equivalence $f:X\rightarrow N_I\times  D^k,$ such that $f|\partial X:\partial X\rightarrow\partial (N_I\times D^k)$ is a diffeomorphism. The action of a $$[\phi]\in\pi_1(\bar B\baut_\partial(N_I))\cong \operatorname{Image}(\tilde J_1) \cong \operatorname{Image}(J_0)\subset \pi_0 (\aut_\partial (N_I))$$ on $f$ is given by the composition $$X \xrightarrow{f}N_I\times D^k\xrightarrow{ \phi\times \operatorname{id}_{D^k}}N_I\times D^k,$$ where $\phi$ is a diffeomorphism representing $[\phi]$ considered as an element of $\operatorname{Image}(J_0)$. \cite[Lemma 3.3.]{berg12} now implies that $$\eta((\phi\times \operatorname{id}_{D^k})\circ f)=((\phi\times \operatorname{id}_{D^k})^*)^{-1}(\eta(f))+\eta(\operatorname{id}_{D^k})=((\phi\times \operatorname{id}_{D^k})^*)^{-1}(\eta(f)),$$ using that the normal invariant of a diffeomorphism is trivial. This implies that $[\phi]$ acts on $\tilde H^*(N;\IQ)\otimes \pi_*(G/O)_k$ via $(\phi^{-1})^*\otimes \id_{\pi_*(G/O)}.$ But this exactly corresponds to the standard action under the isomorphism $$\tilde H^*(N_I;\IQ)\cong \hom_\IQ(\tilde H_*(N_I;\IQ);\IQ)\cong \tilde H_*(N_I;\IQ).$$ If $\phi$ lies in the kernel of the map $$\pi_1(\bar B\aut_\partial(N_I))\rightarrow \Gamma_I,$$ then it is in the kernel of $\tilde H_*$ and a similar argument as before show that it acts trivially on $\pi^{ab}_k(\mathcal{F}_I)$ (compare Lemma \ref{gamma I action on universal cover}). The compatibility with the stabilization maps follows from the fact that the isomorphisms (\ref{naturalmapandH^*}) is natural. 

The statement (2) follows from (1) by using the fact that $G/O$ and hence also the mapping-space $\map_*(N_I/\partial N_I,G/O)$ are infinite loop spaces. Thus all rational k-invariants vanish for $\map_*(N_I/\partial N_I,G/O).$ This is equivalent to: \\
For all elements $\alpha\in \pi_k(\map_*(N_I/\partial N_I,G/O))\otimes \IQ$ there exists an element $c\in H^k(\map_*(N_I/\partial N_I,G/O);\IQ),$ such that $c(h(\alpha))\neq 0,$ where $h$ denotes the rational Hurewicz homomorphism. Since $$\pi_k((\mathcal{F}_I)_{\IQ})\otimes \IQ\rightarrow \pi_k(\map_*(N_I/\partial N_I,G/O))\otimes \IQ$$ is injective, it follows that all rational k-invariants also vanish for $(\mathcal{F}_I)_{\IQ}$. This shows that $(\mathcal{F}_I)_{\IQ}$ is a product of Eilenberg-Maclane spaces and hence its homology is given by the free graded commutative algebra on its homotopy groups. Moreover the $\pi_1(\bar B\aut_\partial (N_I))$-action is induced by the standard action.
\end{proof}

We use the previous proposition to give a Schur multifunctor description of $H_r(\mathcal{F}_I;\IQ).$ For a multiindex $\mu$ with $\ell(\mu)=n-1,$ consider the $\Sigma_{\mu}$-modules
$\Pi(\mu)$ given by $$\Pi(0,...,1,...,0)=s^{-i}\pi_{*}(G/O)\otimes \IQ,$$ where the $1$ sits in the $i$-th position and $0$ otherwise. The corresponding Schur multifunctor $$\Pi:Mod(\IZ)^{n-1}\rightarrow Vect_*(\IQ),$$ has the property that there is an isomorphism of the induced $\Gamma_I$-modules
$$\Pi(H_I)\cong (\tilde H_{*}(N_I,\IQ) \otimes \pi_*(G/O))^+.$$

It follows now that we get an isomorphisms of $\Gamma_I$-modules $$\Lambda\circ \Pi(H_I)\cong \Lambda( (\tilde H_{*}(N_I,\IQ) \otimes \pi_*(G/O)^+))\cong H_*(\mathcal{F}_I,\IQ),$$ where the left-hand $\Lambda$ denotes the free graded commutative algebra endofunctor of $Vect_*(\IQ).$ Recall that $\Lambda$ is given as the Schur functor with $\Lambda(n)=\IQ[n]$ and trivial $\Sigma_n$-action. Now setting $\mathscr{H}_r=\Lambda_r\circ \Pi$ and observing that $\Lambda_r$ is of degree $\leq r$ and $\Pi$ additive, we get the following:

\begin{prop}\label{Fschur}
There is an isomorphism of $\Gamma_I$-modules
$$H_r(\mathcal{F}_I;\IQ)\cong \bigoplus_\mu \mathscr{H}_r(\mu) \otimes_\mu H_I^{\otimes_\eta},$$ compatible with the stabilization maps, where the $\mathscr{H}_r(\mu)$ are trivial for $|\mu|>r.$
\end{prop}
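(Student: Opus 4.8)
The plan is to assemble the statement from the preceding proposition together with the Schur multifunctor calculus of Section~\ref{Polynomial functors and Schur multifunctors}; in fact almost all of the work has been carried out in the discussion preceding the proposition. First I would record the $\Gamma_I$-equivariant chain of isomorphisms
$$H_r(\mathcal{F}_I;\IQ)\;\cong\;\Lambda_r\bigl((\tilde H_*(N_I;\IQ)\otimes\pi_*(G/O))^+\bigr)\;\cong\;\bigl(\Lambda_r\circ\Pi\bigr)(H_I)\;=\;\mathscr{H}_r(H_I),$$
where the first isomorphism is part~(2) of the preceding proposition in homological degree $r$ (with the word-length grading on $\Lambda$ matched to the homological grading) and the second uses the $\Gamma_I$-equivariant isomorphism $\Pi(H_I)\cong(\tilde H_*(N_I;\IQ)\otimes\pi_*(G/O))^+$ built into the definition of $\Pi$. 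The bookkeeping point to verify here is that the desuspensions $s^{-i}$ in the definition of the $\Sigma_\mu$-modules $\Pi(\mu)$ reproduce the grading convention $|a\otimes\alpha|=|\alpha|-|a|$ of part~(1), so that $\mathscr{H}_r$, as a graded functor, is the correct one.

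Next I would expand $\mathscr{H}_r(H_I)$ directly from the definition of a Schur multifunctor, viewing $H_I$ as the tuple of its graded pieces in degrees $1,\dots,n-1$; this gives at once
$$\mathscr{H}_r(H_I)\;\cong\;\bigoplus_{\ell(\mu)=n-1}\mathscr{H}_r(\mu)\otimes_{\Sigma_\mu}H_I^{\otimes\mu},$$
which is the asserted decomposition (here $\eta=\mu$), and the induced $\Gamma_I$-action on the right-hand side is visibly the one coming from the standard action on the graded pieces of $H_I$, hence coincides with the action transported along the isomorphisms above. The vanishing $\mathscr{H}_r(\mu)=0$ for $|\mu|>r$ is then immediate: $\Pi$ is additive, hence of degree $\leq 1$, and $\Lambda_r$ is of degree $\leq r$, so by the composition rule for degrees of polynomial functors $\mathscr{H}_r=\Lambda_r\circ\Pi$ is of degree $\leq r$, and a Schur multifunctor of degree $\leq r$ has $\mathscr{H}_r(\mu)$ trivial whenever $|\mu|>r$.

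Finally, compatibility with the stabilization maps is inherited from what has already been established: part~(2) of the preceding proposition provides compatibility of the isomorphism $H_r(\mathcal{F}_I;\IQ)\cong\mathscr{H}_r(H_I)$ with stabilization, while the inclusion $I_{p,n-p}:H_I\hookrightarrow H_{I'}$ induces the evident map on $\Pi$ and hence on $\mathscr{H}_r$, and the decomposition displayed above is natural in this inclusion by naturality of the composition formula~(\ref{schurcomp}) in its inner argument. I do not expect a genuine obstacle in this proof: the only thing needing care is keeping the homological grading and the word-length grading (together with the internal shifts of $\Pi$) straight, since the substantive input — the rational homotopy type of $\mathcal{F}_I$ and its $\bar\pi_I$-module structure — has been supplied by the preceding proposition through surgery theory.
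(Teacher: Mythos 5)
Your argument is correct and follows the paper's own route exactly: the paper likewise deduces the proposition from part (2) of the preceding proposition together with the identification $\Pi(H_I)\cong(\tilde H_*(N_I;\IQ)\otimes\pi_*(G/O))^+$, sets $\mathscr{H}_r=\Lambda_r\circ\Pi$, and gets the vanishing for $|\mu|>r$ from the degree count ($\Lambda_r$ of degree $\leq r$, $\Pi$ additive), with stabilization compatibility inherited from the naturality already established. The only cosmetic difference is that you invoke the converse of the paper's stated degree criterion for Schur multifunctors, whereas one can read the vanishing of $\mathscr{H}_r(\mu)$ for $|\mu|\neq r$ directly off the composition formula (\ref{schurcomp}) since $\Pi(\eta)$ is concentrated in $|\eta|=1$; both are fine.
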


Now we proof the second main theorem of this paper.

\begin{THMB}
The stabilization map $$H_i(B\bdiff_\partial(N_I);\IQ)\rightarrow H_i(B\bdiff_\partial(N_{I'});\IQ)$$
 is an isomorphism for $g_p> 2i+2$ when $2p\neq n$ and $g_p> 2i+4$ id $2p=n$ and an epimorphism for $g_p\geq 2i+2$ respectively $g_p\geq 2i+4.$
\end{THMB}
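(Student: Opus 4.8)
The plan is to imitate the proof of Theorem A, replacing the universal covering fibration $B\aut_\partial(N_I)\univ\to B\aut_\partial(N_I)$ by the homotopy fibration $\mathcal{F}_I\to B\bdiff_\partial(N_I)\to \bar B\aut_\partial(N_I)$, whose fiber homology has just been computed, at the cost of interposing the Serre spectral sequence of this fibration and the universal covering spectral sequence of the base $\bar B\aut_\partial(N_I)$. As structural input I would record that $\bar B\aut_\partial(N_I)$ has fundamental group $\bar\pi_I$ surjecting onto $\Gamma_I$ with finite kernel (Proposition \ref{mcg} and Cerf's pseudo-isotopy theorem), universal cover $X_I=B\aut_\partial(N_I)\univ$, and higher rational homotopy groups equal to those of $B\aut_\partial(N_I)$. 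By Lemma \ref{gamma I action on universal cover} and the analogous statement for $\mathcal{F}_I$ established above, the finite kernel of $\bar\pi_I\to\Gamma_I$ acts trivially on $H_*(X_I;\IQ)$ and on $H_*(\mathcal{F}_I;\IQ)$, so for coefficient modules factoring through $\Gamma_I$ the Lyndon--Hochschild--Serre spectral sequence of the extension collapses and $H_*(\bar\pi_I;-)\cong H_*(\Gamma_I;-)$; moreover Proposition \ref{iso Hce and H as piI-modules} identifies $H_b(X_I;\IQ)$ with $H^{CE}_b(\mathfrak{g}_I)$, and Proposition \ref{Fschur} presents $H_t(\mathcal{F}_I;\IQ)$ as the value at $\lambda_I$ of a Schur multifunctor of polynomial degree $\le t$.

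Next I would run the three nested spectral sequences, all natural in the stabilization maps. The Serre spectral sequence reads
$$E^2_{s,t}=H_s\big(\bar B\aut_\partial(N_I);\underline{H_t(\mathcal{F}_I;\IQ)}\big)\Longrightarrow H_{s+t}(B\bdiff_\partial(N_I);\IQ);$$
feeding the local system $\underline{H_t(\mathcal{F}_I;\IQ)}$ into the universal covering spectral sequence of $\bar B\aut_\partial(N_I)$, with vanishing $\operatorname{Tor}$-terms since the coefficients are rational so that the K\"unneth theorem applies, gives
$$H_a\big(\Gamma_I;H^{CE}_b(\mathfrak{g}_I)\otimes_\IQ H_t(\mathcal{F}_I;\IQ)\big)\Longrightarrow H_{a+b}\big(\bar B\aut_\partial(N_I);\underline{H_t(\mathcal{F}_I;\IQ)}\big);$$
and for each fixed $t$ the innermost groups are treated exactly as in Proposition \ref{algebraic homological stability result}: one passes to the hyperhomology of the $\IQ[\Gamma_I]$-chain complex $C^{CE}_*(\mathfrak{g}_I)\otimes H_t(\mathcal{F}_I;\IQ)$, observes by Propositions \ref{polynomial functor degree} and \ref{Fschur} that $C^{CE}_k(\mathfrak{g}_I)\otimes H_t(\mathcal{F}_I;\IQ)\cong(\mathscr{C}_k\otimes\mathscr{H}_t)(\lambda_I)$ is polynomial of degree $\le k/2+t$ (the tensor product of Schur multifunctors adds degrees), applies Proposition \ref{general algebraic homological stability result}, and uses Lemma \ref{hyperhom lemma} tensored with the fixed module $H_t(\mathcal{F}_I;\IQ)$, legitimate because $\Gamma_I$ is rationally perfect, to identify the hyperhomology back with $\bigoplus_{a+b} H_a(\Gamma_I;H^{CE}_b(\mathfrak{g}_I)\otimes H_t(\mathcal{F}_I;\IQ))$.

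Finally I would propagate the ranges through the three spectral sequence comparisons. In the van der Kallen and Charney bounds the polynomial degree enters only linearly, and the chain degree of $C^{CE}_*$ together with the two Serre-type filtration degrees get absorbed, so that each intermediate group in total homological degree $i$ stabilizes once $g_p>2i+2$ when $2p\ne n$, respectively $g_p>2i+4$ when $2p=n$, with the epimorphism statement at $\ge$; the spectral sequence comparison theorem, applied three times, then yields the claimed ranges for $H_i(B\bdiff_\partial(N_I);\IQ)$. The main obstacle is precisely this degree accounting: one must verify that stacking the estimate ``polynomial degree $\le k/2+t$'' on the Serre, universal-covering and hyperhomology filtrations never produces a range growing faster than $2i$ in the total degree $i$ --- the slack built into Theorems \ref{vdkstab} and \ref{charneystab} is calibrated exactly for this, as in the proof of Proposition \ref{algebraic homological stability result}, and carrying it out requires the same care.
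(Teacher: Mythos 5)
Your proposal follows essentially the same route as the paper's proof: the Serre spectral sequence of $\mathcal{F}_I\to B\bdiff_\partial(N_I)\to \bar B\aut_\partial(N_I)$, then the universal covering spectral sequence of the base, reduction from $\bar\pi_I$ to $\Gamma_I$, the hyperhomology argument of Proposition \ref{algebraic homological stability result} applied to $C^{CE}_*(\mathfrak{g}_I)\otimes H_t(\mathcal{F}_I;\IQ)$, and the degree bound $\le k/2+t$ for the tensor product of Schur multifunctors fed into Proposition \ref{general algebraic homological stability result}. The only (immaterial) difference is the order in which you interleave the Lyndon reduction and the hyperhomology step; the degree accounting you flag as the main obstacle is exactly the computation $2r+s/2+l+2\le 2(r+s+l)+2$ carried out in the paper.
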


\begin{proof}Denote by $Y_I=B\bdiff_\partial(N_I)$ and $\bar X_I=\bar B\baut_\partial(N_I).$
Consider the Serre spectral sequences of the homotopy fibration $$\mathcal{F}_I\rightarrow Y_I\rightarrow  \bar  X_I$$ and the analog for $I'.$ The stabilization map induces maps on the $E_2$-pages 
$$\sigma_*:H_k(\bar X_I;H_l(\mathcal{F}_I;\IQ))\rightarrow H_k(\bar X_{I'};H_l(\mathcal{F}_{I'};\IQ)).$$ The theorem follows upon showing that these are isomorphisms for $g_p> 2k+2l+2$ $(+4$ if $p=n/2)$ and epimorphisms for $g_p\geq2k+2l+2$ $(+4$ if $p=n/2).$
For this we consider the universal coving spectral sequence $$H_r(\pi_1(\bar X_I);H_s(\bar  X_I\univ;H_l(\mathcal{F}_I;\IQ)))\Rightarrow H_{r+s}(\bar X_I;H_l(\mathcal{F}_I,\IQ)).$$ The condition above would follow upon showing that maps induced by the stabilization map on the $E^2$-page are isomorphisms for $g_p> 2r+2s+2l+2$ $(+4$ if $p=n/2)$ and epimorphisms for $g_p\geq2r+2s+2l+2$ $(+4$ if $p=n/2).$ To show this we observe that there are isomorphism of $\Gamma_I$-modules compatible with the stabilization maps:
$$H_s(\bar  X_I\univ;H_l(\mathcal{F}_I;\IQ))\cong H_s(\bar  X_I\univ)\otimes H_l(\mathcal{F}_I;\IQ))\cong H^{CE}_s(\g_I)\otimes H_l(\mathcal{F}_I;\IQ)),$$ where $\Gamma_I$ acts on the $2$-nd and $3$-rd term diagonally. Note that $\bar X_I$ and $X_I$ have the same universal cover, which is moreover naturally homotopy equivalent to $B\aut_\partial(N_I)\univ.$ The stability for $$H_r(\pi_1(\bar  X_I);H^{CE}_s(\g_I)\otimes H_l(\mathcal{F}_I;\IQ)))$$ follows from stability for $$H_r(\pi_1(\bar X_I);C^{CE}_s(\g_I)\otimes H_l(\mathcal{F}_I;\IQ))),$$ exactly like in Proposition \ref{algebraic homological stability result} upon using the two hyperhomology spectral sequences and that $\bar \pi_I$ is rationally perfect (Lemma \ref{H1 triv pi0}). Hence we are left with showing that the stabilization maps 
$$H_r(\pi_1(\bar X_I);C^{CE}_s(\g_I)\otimes H_l(\mathcal{F}_I;\IQ)))\rightarrow H_r(\pi_1(\bar X_{I'});C^{CE}_s(\g_{I'})\otimes H_l(\mathcal{F}_{I'};\IQ)))$$ are isomorphisms for $g_p> 2r+2s+2l+2$ $(+4$ if $p=n/2)$ and epimorphisms for $g_p\geq2r+2s+2l+2$ $(+4$ if $p=n/2).$ The Lyndon spectral sequence reduces this to the corresponding statement for $$H_r(\Gamma_I;C^{CE}_s(\g_I)\otimes H_l(\mathcal{F}_I;\IQ)))\rightarrow H_r(\Gamma_{I'};C^{CE}_s(\g_{I'})\otimes H_l(\mathcal{F}_{I'};\IQ))).$$
Propositions \ref{Fschur} and \ref{der iso compatible with stab maps} give us isomorphisms of $\Gamma_I$-modules compatible with the stabilization map $C^{CE}_s(\g_I)\otimes H_l(\mathcal{F}_I;\IQ))\cong \mathscr{C}_s(H_I) \otimes \mathscr{H}_l(H_I).$ The functor $\mathscr{C}_s$ is polynomial of degree $\leq s/2$ and the functor $\mathscr{H}_l$ is polynomial of degree $\leq l.$ The tensor product (in the sense of Schur multifunctors) $\mathscr{C}_s\otimes \mathscr{H}_l$ is of degree $\leq s/2+l.$ By Proposition \ref{general algebraic homological stability result} the stabilization maps
$$H_r(\Gamma_I;\mathscr{C}_s\otimes \mathscr{H}_l(H_I))\rightarrow H_r(\Gamma_{I'};\mathscr{C}_s\otimes \mathscr{H}_l(H_{I'}))$$ are isomorphisms for $g_p> 2r+s/2+l+2$ $(+4$ if $p=n/2)$ and epimorphisms for $g_p\geq 2r+s/2+l+2$ $(+4$ if $p=n/2),$ which finishes the proof.
\end{proof}

\bibliographystyle{alpha}
\bibliography{bib}

\end{document}